\newcommand{\mhalf}{M^{\frac{1}{2}}}
\newcommand{\mhalfh}{M^{\frac{1}{2}}_h}
\def \R{\mathbb{R}}
\def \b{\beta}
\def \e{\varepsilon}
\def \W{\Omega}
\def \phi{\varphi}
\def \1{\mathbbm{1}}
\def \<{\left<}
\def \>{\right>}
\def \mA{\mathcal{A}}
\def \mB{\mathcal{B}}
\def \mC{\mathcal{C}}
\def \mD{\mathcal{D}}
\def \mM{\mathcal{M}}
\def \mQ{\mathcal{Q}}
\def \mR{\mathcal{R}}
\def \mT{\mathcal{T}}
\def \Th{\mathcal{T}_h}
\def \EIx{\mathcal{E}_{x,h}^I}
\def \EIv{\mathcal{E}_{v,h}^I}
\def \grad{\nabla}
\def \lss{\lesssim}
\def \wto{\rightharpoonup}
\def \mSh{\mathcal{S}_h^\beta}
\def \mPh{\mathcal{P}_h}
\def \h1hg{H_h^1(\W_x)}
\def \Cwr{C_{\omega,r}}
\DeclareMathOperator\erf{erf}
\def \dx[#1]{\ensuremath{\operatorname{d}\!{#1}}}
\def \wto{\rightharpoonup}
\def \lavg{\{\!\!\{}
\def \ravg{\}\!\!\}}
\def \ljmp{[\![}
\def \rjmp{]\!]}
\newtheorem{defn}[theorem]{Definition}
\newtheorem{remark}[theorem]{Remark}
\newtheorem{prob}[theorem]{Problem}
\newtheorem{assumption}[theorem]{Assumption}
\numberwithin{equation}{section}
\title{Asymptotic Preserving Discontinuous Galerkin Methods for a Linear Boltzmann Semiconductor Model\thanks{This material is based upon work supported by the U.S. Department of Energy, Office of Science, Office of Advanced Scientific Computing Research, as part of their Applied Mathematics Research Program. The work was performed at the Oak Ridge National Laboratory, which is managed by UT-Battelle, LLC under Contract No. De-AC05-00OR22725. The United States Government retains and the publisher, by accepting the article for publication, acknowledges that the United States Government retains a non-exclusive, paid-up, irrevocable, world-wide license to publish or reproduce the published form of this manuscript, or allow others to do so, for the United States Government purposes. The Department of Energy will provide public access to these results of federally sponsored research in accordance with the DOE Public Access Plan (http://energy.gov/downloads/doe-public-access-plan).
}}
\author{Victor P.\ DeCaria\thanks{Mathematics in Computation Section, Computer Science and Mathematics Division, Oak Ridge National Laboratory, Oak Ridge, TN 37831, USA(\email{vpdecaria@gmail.com}).}
\and Cory D.\ Hauck\thanks{Mathematics in Computation Section, Computer Science and Mathematics Division, Oak Ridge National Laboratory, Oak Ridge, TN 37831, USA and 
Mathematics Department, University of Tennessee, Knoxville, TN 37996, USA (\email{hauckc@ornl.gov}).} 
\and Stefan R.\ Schnake\thanks{Mathematics in Computation Section, Computer Science and Mathematics Division, Oak Ridge National Laboratory, Oak Ridge, TN 37831, USA  (\email{schnakesr@ornl.gov}).}}
\begin{document}

\maketitle

\begin{abstract}
    A key property of the linear Boltzmann semiconductor model is that as the collision frequency tends to infinity, the phase space density $f = f(x,v,t)$ converges to an isotropic function $M(v)\rho(x,t)$, called the drift-diffusion limit, where $M$ is a Maxwellian and the physical density $\rho$ satisfies a second-order parabolic PDE known as the drift-diffusion equation.  Numerical approximations that mirror this property are said to be asymptotic preserving.  In this paper we build two discontinuous Galerkin methods to the semiconductor model: one with the standard upwinding flux and the other with a $\e$-scaled Lax-Friedrichs flux, where 1/$\e$ is the scale of the collision frequency.  We show that these schemes are uniformly stable in $\e$ and are asymptotic preserving.  In particular, we discuss what properties the discrete Maxwellian must satisfy in order for the schemes to converge in $\e$ to an accurate $h$-approximation of the drift diffusion limit.  Discrete versions of the drift-diffusion equation and error estimates in several norms with respect to $\e$ and the spacial resolution are also included.
\end{abstract} 

\begin{keywords}
    drift-diffusion, asymptotic preserving, discontinuous Galerkin, semiconductor models
\end{keywords}

\begin{AMS}
    65M08, 65M12, 65M15, 65M60
\end{AMS}

\section{Introduction}

Kinetic equations are an established tool for modeling charged-particle transport in semiconductors, particularly in non-equilibrium settings \cite{markowich2012semiconductor,poupaud1994mathematical,jungel2009transport}.  However, numerical simulations of such equations are known to be challenging, due to the size of the space on which they are defined (in general three position, three momentum variables, plus time) and the multiscale nature of the equations.  With regards to the latter, it is well-known that for large collision frequencies and long-time scales, the kinetic solution is well-approximated by a drift-diffusion equation which depends on space and time only.   Under reasonable conditions, this limit was established rigorously for the case of an applied electric field in \cite{Pou1991}.  The case of a self-consistent field was later treated in \cite{abdallah2004diffusion, masmoudi2007diffusion}. 

Because of the drift-diffusion approximation, solving a kinetic model of charge transport in collisional regimes may be unnecessarily expensive; to ameliorate this cost, methods which leverage the drift-diffusion approximation, either via domain decomposition \cite{klar1998asymptotic} or acceleration \cite{VDC2021,laiu2020fast} are sometimes used.  At a minimum, it is important that a discretization of the kinetic equation recover a stable and consistent discretization of the drift-diffusion limit as the collision frequency becomes infinitely large;  this is the so-called asymptotic preserving (AP) property \cite{jin1999efficient,jin2010asymptotic}.  While standard finite volume or finite-difference methods that that rely on upwinding to discretize advection terms are not asymptotic preserving, there are specialized spatial discretizations \cite{schmeiser1998convergence} and operator splitting techniques \cite{klar1999numerical,jin2000discretization, jin2010asymptotic} that are.

A different approach for capturing the numerical drift-diffusion limit is to use discontinuous Galerkin (DG) methods.  These methods have been developed both for kinetic semiconductor equations \cite{cheng2007discontinuous,cheng2008discontinuous,cheng2011brief, cheng2009discontinuous,majorana2019simulation} and for the drift-diffusion equations \cite{liu2016analysis, chen2020steady}.  While not yet rigorously established in the literature, it is reasonable to assume that DG methods will recover the numerical drift-diffusion limit.  Such a conjecture rests on a similar body of work for kinetic equations of radiation transport.  In that setting, collisional dynamics over long time scales lead to a standard diffusion equation \cite{bensoussan1979boundary,larsen1974asymptotic,habetler1975uniform}.   The asymptotic preserving properties of DG methods for transport equations were first established in \cite{larsen1974asymptotic} for one-dimensional (slab) geometries and later extended to the general multi-dimensional setting in \cite{adams2001discontinuous}.  In \cite{guermond2010asymptotic}, the work in \cite{adams2001discontinuous} was re-established using a rigorous functional analysis framework. The work presented here follows in the spirit of that framework.

In the current paper, we rigorously prove the numerical drift-diffusion limit for a DG method applied to a linear kinetic semiconductor equation.  In particular, the collision operator approximates very complicated material interactions with a simple relaxation model and the electric field is not self-consistent, but rather assumed to be given.  The DG method relies on a reformulation of the kinetic equation in terms of a weighted distribution function. While such a reformulation is likely not necessary, partially due to the numerical results in \cite{laiu2020fast}, it does make some stability results easier to prove.  Such results are challenging because, unlike the radiation transport case, the advection operators and collision operator of the kinetic semiconductor equation are 
stable in $L^2$ spaces with two different weightings.
Even at the analytical level, this mismatch poses significant challenges \cite{masmoudi2007diffusion}.  Even so, we expect that the analysis presented here can be leveraged for ``more standard'' implementations.

Beyond linearity, there are several other assumptions made in the analysis.  Some of these are technical, but others are quite important.  Among these, the most important is a zero-inflow boundary condition which precludes the development of a boundary layer.   We also assume that the initial data is well-prepared in the sense that it is consistent with the state of local thermal equilibrium.  Removing these three assumptions---linearity, zero inflow, and well-prepared initial data---will be important steps in future work.  In addition, uniform error estimates independent of the collision frequency, along the lines of \cite{sheng2021uniform} for the radiation transport case, should be considered. However, the analysis here is already fairly involved and requires more work than the radiation transport case. The main novelty of this work is the rigorous analysis of the numerical diffusion limit of a kinetic equation.  In contrast to  \cite{guermond2010asymptotic} whose work and novelty closely resembles and inspires the work here, the kinetic equation in the current setting is time-dependent, involves advection in both the physical and velocity variables, is defined over an unbounded velocity domain, and has a collision operator with a kernel (the local thermal equilibrium) that is not contained in a standard finite element space.  Additionally, this work provides several lemmas concerning stability and control of projecting discontinuous Galerkin finite element functions onto a continuous Galekrin finite element space.  These technical results will aid in the current and future numerical analysis of AP discontinuous Galerkin schemes.

The remainder of the paper is organized as follows.  In \Cref{sect:prelim}, we introduce the relevant equations, preliminary notation, assumptions used to construct a discrete Maxwellian, and the numerical method for solving the kinetic semiconductor model given in \eqref{eqn:sc} below.  We characterize the collision frequency by an asymptotic parameter $\e >0$ which is inversely proportional to the mean-free-path between collisions, and in \Cref{sect:a_priori_estimates}, we develop stability and pre-compactness estimates that allow us to take the $\e$-limit to 0.  Additionally, we give several technical results which will aid in the general analysis of discrete drift-diffusion limits.  In \Cref{sect:dde-limit}, we show the numerical density $\rho_h^\e$ of the kinetic model converges to the solution of a discretized version $\rho_h^0$ of the drift diffusion system, given in \eqref{eqn:dde-cont-system} below.  In \Cref{sect:error_est}, we show error estimates for $\|\rho_h^\e-\rho_h^0\|$ in $\e$ and $h$ as well as error estimates for $\|\rho_h^0-\rho^0\|$ in $h$.  This allows us to build estimates for $\|\rho_h^\e-\rho_0\|$ in $\e$ and $h$.
\section{Background, Preliminaries, and Assumptions}\label{sect:prelim}

Given $\e>0$, a Lipschitz spatial domain $\Omega_x \subset \mathbb R^3$, and data $f_0$ prescribed on $\Omega_x$, let $f_\e(x,v,t)$ be the solution of the following kinetic semiconductor model
\begin{subequations}
\label{eqn:sc}
\begin{alignat}{3}
\e\frac{\partial f^\e}{\partial t} + v\cdot\grad_x f^\e + E(x,t)\cdot\grad_vf^\e - \frac{1}{\e}Q(f^\e)= 0,&\qquad  && (x,v) \in \W_x\times\R^3 ,\, t >0;
\label{eqn:sc_eq}\\
f^\e(x,v,t) = f_{-}(x,v,t),&\qquad  &&(x,v)\in\partial\W_- ,\,t > 0; \label{eqn:sc_eq_inflow} \\
f^\e(x,v,0) = f_0(x,v),&\qquad  && (x,v) \in \W_x\times\R^3, \label{eqn:sc_eq_initial}
\end{alignat}
\end{subequations}
where $E\in W^{1,\infty}([0,T];L^{\infty}(\W_x))$ is a given electric field, $f_{-}$ is the inflow data, and 
\begin{equation}
\partial\W_- = \{(x,v)\in \partial\W_x\times \R^3:v\cdot n_{x}(x) < 0 \},
\end{equation}
with $n_{x}(x_0)$ being the normal to $\Omega_x$ at the point $x_0$, is the inflow component of the boundary.  Additionally the collision operator $Q$ is defined by
\begin{equation}
\label{eqn:Q_rho_M}
Q(f^\e)=\omega(M\rho^\e-f^\e)
\end{equation}
where
\begin{equation}\label{eqn:rho_M}
\qquad 
\rho^\e(x,t) = \int_{\mathbb R^3} f^\e(x,v,t) \dx[v];
\qquad 
M(v) = (2\pi\theta)^{-3/2}e^{-|v|^2/2\theta}
\end{equation}
with $\theta >0$ the (lattice) temperature; and $\omega\in L^{\infty}(\W_x)$ with $0<\omega_{\min}\leq \omega$ on $\W_x$ is the (scaled) collision frequency.

\begin{defn} The function $\rho^\e$ defined in \eqref{eqn:rho_M} is the number density and
\begin{align}
    J^\e = \frac{1}{\e} \int_{\mathbb R^3} v f^\e(x,v,t) \dx[v]
\end{align}
is the current density.
\end{defn}

It has been shown in \cite{Pou1991} that if the inflow and initial data are isotropic, that is, $f_{-}(x,v,t) = m(x)M(v)$ and $f_0(x,v)=\rho_0(x,t)M(v)$, then as $\e\to 0$, $f^\e$ converges to $M(v)\rho^0(x)$, where $\rho^0$ solves the drift-diffusion equation:
\begin{subequations}\label{eqn:dde-cont}
\begin{alignat}{3}
    \frac{\partial \rho^0}{\partial t}+\nabla_x \cdot \left(\frac{1}{\omega}(-\theta\grad_x\rho^0+E\rho^0)\right) = 0&, \quad && x \in \W_x,\, t > 0;\\
    \rho^0(x,t) = m(x)&, \qquad && x \in \partial\W_x,\, t > 0;\\
    \rho^0(x,0) = \rho_0(x)&, \qquad && x \in \W_x.
\end{alignat}
\end{subequations}
Let $J^0=\frac{1}{\omega}(-\theta\grad_x\rho^0+E\rho^0)$.  Then the pair $\{\rho^0,J^0\}$ solves the equivalent first-order system: 
\begin{subequations}
\label{eqn:dde-cont-system}
\begin{alignat}{3}
    \frac{\partial \rho^0}{\partial t}+\nabla_x \cdot J^0 = 0&,\qquad && x \in \W_x,\, t > 0;\\
    \omega J^0 + \theta\grad_x\rho^0-E\rho^0 = 0&,\qquad && x \in \W_x,\, t > 0; \\
    \rho^0(x,t) = m(x) &, \qquad && x \in \partial\W_x,\, t > 0; \\
    \rho^0(x,0) = \rho_0(x) &, \qquad && x\in \W_x.
\end{alignat}
\end{subequations}

\subsection{Notation}
\label{subsec:notation}

Given a measureable open set $D \subset \R^3$, let $L^2(D)$ and $W^{k,p}(D)$ be the standard Lebesgue and Sobolev spaces of functions on $D$ and let $H^k(D) := W^{k,2}(D)$.  When $D$ is a volume (a three-dimensional manifold) in $\R^3$, we use  $(\cdot,\cdot)_D$ to denote the standard $L^2$ inner product with respect to the Lebesgue measure $\dx[x]$.  If $D$ is a surface (a two-dimensional manifold) in $\R^3$, we use $\left<\cdot,\cdot\right>_{D}$ to denote the $L^2$ inner product with respect to the Lebesgue measure on the surface.  These inner products can be extended to vector valued functions in a natural way by use of the Euclidean inner product.

To discretize \eqref{eqn:sc}, we  first restrict the domain in $v$.  Given $L >0$, let $\W_v = [-L,L]^3$ and define $\W = \W_x\times\W_v$.  Given a mesh parameters $h_x>0$, let $\mathcal{T}_{x,h}:=\mathcal{T}_{x,h_x}$ be a mesh on $\W_x$ constructed from open polyhedral cells $K$ of maximum diameter $h_x$, and let $\EIx$ be the interior skeleton of $\mathcal{T}_{x,h}$, i.e., the set of edges $e \subset \partial K \not \subset \partial \W_x$.  Similarly, given $h_v>0$, let  $\mathcal{T}_{v,h}:=\mathcal{T}_{v,h_v}$ and $\EIv$ be a mesh and interior skeleton for $\W_v$ respectively.  We assume that $\mathcal{T}_{x,h}$ is quasi-uniform and shape regular.  The conditions of $\mathcal{T}_{v,h}$ are given in \Cref{subsect:discret_Max}.

Given an edge $e\in\EIx=\partial K^+\cap\partial K^-$ for some $K^+,K^-\in\mathcal{T}_{x,h}$, let $z \in L^2(\W_x)$ and $\tau\in [L^2(\W_x)]^3$ be scalar and vector-valued functions, respectively, each with well-defined traces on $K^+$ and $K^-$.  For such functions, we define the average and jump methods 
\begin{subequations}
\begin{alignat}{3}
\lavg z\ravg &= \frac{1}{2}\left(z\big|_{K^+} + z\big|_{K^-}\right),&\qquad \ljmp z\rjmp &= z\big|_{K^+}{n_x^+} + z\big|_{K^-}n_x^- ,\\
    \lavg\tau\ravg &= \frac{1}{2}(\tau|_{K^+}+\tau|_{K^-}),&\qquad  \ljmp\tau\rjmp &= \tau|_{K^+}\cdot n_x^+ + \tau|_{K^-}\cdot n_x^- ,
\end{alignat}
\end{subequations}
where $n_x^\pm$ are the unit normal vectors pointing outward from $K^\pm$, respectively.  These definitions can be modified to average and jumps in the $v$-direction in a natural way with unit normal vector $n_v$.

For ease of presentation we will use $a\lss b$ to denote $a\leq Cb$ where $C>0$ is a constant independent of $h_x$ and $\e$.  
The constant additionally depends on the data $\omega$ and $E$ (see \cref{ass:data}), the final time $T$,  $\W_x$, $L$, $h_x$-independent mesh parameters of $\mathcal{T}_{x,h}$, and the discrete Maxwellian discussed in \Cref{subsect:discret_Max} which depends on $h_v$, $L$, and $\theta$. 

Given integers $k_x \geq 0$ and $k_v \geq 0$, let 
\begin{align}
    V_{x,h} &= \{z\in L^2(\W_x):z\big|_T \in \mathbb{Q}_{k_x}(T)\ \forall K\in\mathcal{T}_{x,h} \}
    \\
    V_{v,h} &= \{z\in L^2(\W_v):z\big|_T \in \mathbb{Q}_{k_v}(T)\ \forall K\in\mathcal{T}_{v,h} \},
\end{align}
where $\mathbb{Q}_{k}(T)$ is the set of all polynomials on $K$ with $k$ being the maximum degree in any variable, and let
\begin{equation}
    V_h=V_{x,h}\otimes V_{v,h}
\end{equation}
be the tensor DG discrete space. For purposes of this paper, we assume $k_x\geq 0$ and $k_v\geq 1$.%
\footnote{The assumption on $k_v$ is to enable the construction of the discrete Maxwellian;  see  \Cref{subsect:discret_Max}.}
For any function $z_h\in V_h$, let $\grad_x z_h \subset V_h$ and $\grad_v z_h \subset V_h$ denote piece-wise gradients defined on $K$ for all $T\in\Th$.%
\footnote{The discrete gradient ignores the jumps in $z_h$ across the boundary, but agrees with the standard definition of gradient for continuous functions.} 

For the discretization in $x$, some addition notation is needed.  Let
\begin{equation}
V_{x,h}^0 =\{q_h\in V_{x,h}:q_h\big|_{\partial\W_x} = 0\}
\end{equation}  
be the space of DG functions with vanishing trace, and let
\begin{equation}
    S_{x,h}=V_{x,h}\cap C^0(\overline{\W_x})
    \quad\text{and}\quad
    S_{x,h}^0 =V_{x,h}^0\cap C^0(\overline{\W_x})
\end{equation}
be the continuous finite element analogues to the DG spaces $V_{x,h}$ and $V_{x,h}^0$, respectively.   

Below we consider two discretizations parameterized by an integer $\beta \in \{0,1\}$ that determines the type of numerical flux used and, consequently, the space that the discrete drift-diffusion limit will live; see \Cref{rmk:method}.  Let $\mSh$ be an
$L^2$-orthogonal projection operator from $L^2(\W_x)$ onto $V_{x,h}^0$ if $\beta=1$ and from $L^2(\W_x)$ onto $S_{x,h}^0$ if $\beta=0$.   Moreover, let $(S_{x,h}^0)^*$ and $(V_{x,h}^0)^*$ denote the topological dual of $S_{x,h}^0$ and $V_{x,h}^0$ respectively and let $\wto$ represent convergence in the weak topology.  Additionally, for $\beta \in \{0,1\}$ define the discrete dual-norm $H_{h,\beta}^{-1}(\W_x)$ by
\begin{equation}
\label{eqn:dual_norms}
\|z_h\|_{H_{h,0}^{-1}(\W_x)} = \sup_{\substack{q_h\in S_{x,h}^0\\ q_h\neq 0}}\frac{(z_h,q_h)_{\W_x}}{\|\grad_x q_h\|_{L^2(\W_x)}}
\quad \text{or}\quad 
\|z_h\|_{H_{h,1}^{-1}(\W_x)} = \sup_{\substack{q_h\in V_{x,h}^0\\ q_h\neq 0}}\frac{(z_h,q_h)_{\W_x}}{\| q_h\|_{H_h^1(\W_x)}}
\end{equation}
where $\|\cdot\|_{H_h^1(\W_x)}$ is a discrete $H^1$ norm:
\begin{equation}
\|q_h\|_{H_h^1(\W_x)}^2 = \|\grad_x q_h\|_{L^2(\W_x)}^2 + \frac{1}{h_x}\|\ljmp q_h\rjmp\|_{L^2(\EIx)}^2 + \frac{1}{h_x}\|q_h\|_{L^2(\partial\W_x)}^2.
\end{equation}
A discrete Poincar\'e-Friedrichs inequality \cite[Theorem 10.6.12]{Bre2008} yields
\begin{align}\label{eqn:pf_ineq}
 \|q_h\|_{L^2(\W_x)} \lss \|q_h\|_{H_h^1(\W_x)}.
\end{align}

Given a Banach space $X$, $1\leq p\leq \infty$, and the final time $T$, we let $L_T^p(X):= L^p([0,T];X)$, $C^0([0,T];X)$, and $H^1([0,T];X)$ be the standard $L^p/C^0/H^1$ spaces of Banach-space valued functions with Bochner integration.

Finally we will often write $\frac{\partial}{\partial t}$ as $\partial_t$ in order to keep the spacing consistent in longer estimates.  Both will be used interchangeably.  

\subsection{Alternate form of the PDE}\label{subsect:alternate_pde}

It is easy to show that the collision operator $Q$, defined in \eqref{eqn:Q_rho_M}, is semi-coercive in the weighted norm $\|M^{-\frac{1}{2}} (\cdot) \|_{L^2(\W)}$. Indeed, testing $Q$ by $M^{-1} f^\e$ gives
\begin{align}
 -( M^{-1} f^\e, Q  (f^\e))_\W
&= \| \omega^{\frac{1}{2}} M^{-\frac{1}{2}} (f^\e - M\rho^ \e) \|_{L^2(\W)}^2.
\end{align}
This structure is critical to achieving the drift-diffusion limit.  However since standard discretizations of \eqref{eqn:sc} do not allow test functions with an $M^{-1}$ weight, we instead rewrite \eqref{eqn:sc} in terms of the weighted distribution  $g^\e=M^{-\frac{1}{2}} f^\e$:
\begin{subequations}
\label{eqn:alternate}
\begin{alignat}{3}
\e \frac{\partial g^\e}{\partial t}+ v\cdot\grad_x g^\e + E(x,t)\cdot\grad_v g^\e 
- \frac{\omega}{\e}\left( \mhalf \rho^\e - g^\e \right) =  \frac{1}{2\theta}E(x,t)\cdot v g^\e,&\qquad  && (x,v) \in \W_x\times\R^3 ,\, t >0;
\label{eqn:alternate_eq} \\
g^\e(x,v,t) = f_{-}(x,v,t)/M^\frac{1}{2}(v),&\qquad  &&(x,v)\in\partial\W_- ,\,t > 0; \\
g^\e(x,v,0) = f_0(x,v)/M^\frac{1}{2}(v),&\qquad  && (x,v) \in \W_x\times\R^3,
\end{alignat}
\end{subequations}
where, in terms of $g^\e$, $\rho^\e = (\mhalf,g^\e)_{\R^3}$.
Since $\|g^\e\|_{L^2(\W)} = \|M^{-\frac{1}{2}}f^\e\|_{L^2(\W)}$, the weighted collision operator
\begin{equation}
   M^{-\frac12} Q (M^{\frac12} g^\e) = \omega \left( \mhalf \rho^\e - g^\e \right) 
\end{equation}
will be $L^2$-coercive and symmetric as a function of $g^\e$.  We refer to the function $\mhalf g^\e$ as the weighted equilibrium.   The cost of this additional structure is the electric field term on the right-hand side of \eqref{eqn:alternate_eq}.

\subsection{Construction of Discrete Maxwellian}
\label{subsect:discret_Max}

In order to recover the proper drift-diffusion limit, we need to construct a suitable discrete Maxwellian on the bounded domain $\Omega_v$.  This is done via an approximation of the square root of the one-dimensional Maxwellian.  Assume that $\mT_{v,h}$ is a tensor product mesh, i.e., $\mT_{v,h} = \mT_{v,h}^1\otimes \cdots \otimes \mT_{v,h}^3$, and let $M_{h,i}^{\frac{1}{2}}$ be a continuous, strictly positive, piecewise-polynomial approximation of the  one-dimensional root-Maxwellian over $\mT_{v,h}^i$:
\begin{equation}\label{eqn:maxwell_approx}
M_{h,i}^{\frac{1}{2}}(v_i)  \approx M_i^{\frac{1}{2}}(v_i) :=\left(\frac{1}{\sqrt{2\pi\theta}}e^{\frac{-v_i^2}{2\theta}}\right)^{1/2}, \qquad i=1,\dots,3,
\end{equation}
with the following properties:
\begin{assumption}
\label{ass-discrete-root-Max}
For each $i=1,\dots,3$, the function $M_{h,i}^{\frac{1}{2}}$ satisfies the following properties:
\begin{multicols}{2}
\begin{enumerate}[a.]
    \item  $(M_{h,i}^{\frac{1}{2}},M_{h,i}^{\frac{1}{2}})_{[-L,L]} = 1$, \label{ass-mass}
    \item  $M_{h,i}^{\frac{1}{2}}(L) = M_{h,i}^{\frac{1}{2}}(-L)$, \label{ass-symm} 
    \item $(\partial_v M_{h,i}^{\frac{1}{2}},\partial_v  M_{h,i}^{\frac{1}{2}})_{[-L,L]} = \frac{1}{4\theta}$, \label{ass-energy}
    \item   $(\partial_v M_{h,i}^{\frac{1}{2}},M_{h,i}^{\frac{1}{2}})_{[-L,L]} = 0$. \label{ass-momen}
\end{enumerate}
\end{multicols} 
\end{assumption}

\begin{defn}
\label{defn:root-Maxwellian-and-discrete-velocity}
The discrete root-Maxwellian $\mhalfh \in V_{v,h}\cap C^0(\overline{\W_v})$ is
\begin{equation}
    \mhalfh(v) = \prod_{i=1}^3 M_{h,i}^{\frac{1}{2}}(v_i),
\end{equation}
and the discrete velocity $v_h$ is 
\begin{equation}
\label{eqn:v_h}
    v_h = -2\theta\frac{\grad_v\mhalfh}{\mhalfh} = -2\theta \grad_v \log(\mhalfh) .
\end{equation}
\end{defn}
Since $\mhalfh>0$ on $\overline{\W_v}$, it follows that $v_h\in L^{\infty}(\W_v)$.  Moreover $v_h\mhalfh\in V_{v,h}$, even though $v_h\notin V_{v,h}$.

\begin{remark} \label{rmk:method}
In defining the discrete root-Maxwellian:
\begin{enumerate}
    \item The continuity requirement on $M_{h,i}^{\frac{1}{2}}$ is the reason for assumption $k_v \geq 1$ in \Cref{subsec:notation}.
    \item Assumption \ref{ass-discrete-root-Max}.\ref{ass-momen} is not independent, but rather is implied by \Cref{ass-discrete-root-Max}.\ref{ass-symm}.  
    \item If Assumption \ref{ass-discrete-root-Max}.\ref{ass-energy} is not satisfied, then the numerical discretization below will still converge in the $\e$-limit to a discretization of a drift-diffusion system, but  $\theta$ from \eqref{eqn:dde-cont-system} will be $\theta_{h_v}$, see \eqref{eqn:theta_h}, instead of the proper temperature.  This can be seen by substituting $\theta_{h_v}$ for $\theta$ in \Cref{sect:a_priori_estimates,sect:dde-limit}. 
\end{enumerate}
\end{remark}

\begin{remark} \label{rmk:interp_maxwell}
The existence of such a discrete Maxwellian satisfying every assumption given is not discussed. Rather, to create a discrete Maxwellian that satisfies every assumption but \Cref{ass-discrete-root-Max}.\ref{ass-energy}, take $M_{h,i}^{\frac{1}{2}}$ to be the Lagrange piecewise linear nodal interpolant of $M_i^{\frac{1}{2}}$ and scale it to have an $L^2$ norm of 1.  With mild restrictions on $L$ and $h_v$ based on $\theta$, we show in \Cref{lem:M_bnd}, given in the appendix, that $M_{h,i}^{1/2}$ is an $\mathcal{O}(h_v^2)$ approximation to $M_i^{1/2}$ in the $L^2$-norm and an $\mathcal{O}(h_v)$ approximation in the $H^1$-norm.\footnote{We are neglecting the errors due to the finite velocity domain.  See \Cref{lem:M_bnd} for the full estimate.}  While the discrete temperature 
\begin{equation}\label{eqn:theta_h}
    \theta_{h_v} := \tfrac{1}{4}(\partial_v M_{h,i}^{\frac{1}{2}},\partial_v  M_{h,i}^{\frac{1}{2}})_{[-L,L]}^{-1}
\end{equation}
is not exactly $\theta$, is it readily seen from \Cref{lem:M_bnd} that $\theta_{h_v}$ is an $\mathcal{O}(h_v)$ approximation to $\theta$.
\end{remark}

\subsection{The Numerical Method}

We now give our numerical method for \eqref{eqn:alternate}.  
\begin{prob}
\label{prob:discr}Find $g_h^\e\in H^1([0,T];V_h)$  such that
\begin{subequations}
\label{eqn:alt_forms}
\begin{align}
\e\left(\frac{\partial g_h^\e}{\partial t},z_h \right)_\W + \mathcal{A}(g_h^\e,z_h) + \mathcal{B}(g_h^\e,z_h) + \mathcal{D}(g_h^\e,z_h) - \frac{1}{\e}\mathcal{Q}(g_h^\e,z_h) &= \mathcal{C}(g_h^\e,z_h) + \mR(z_h), \label{eqn:alt_forms_eq}\\
g_h^\e(0):=g_{0,h}   \label{eqn:alt_forms_ic}
\end{align}
\end{subequations}
for all $z_h\in V_h$ and a.e\ $0<t\leq T$ where
\begin{subequations}
\label{eqn:forms}
\begin{align}
    \mathcal{A}(w_h,z_h) &= -(v_hw_h,\grad_x z_h)_\W + \left< v_h\lavg w_h\ravg+\e^\beta\frac{|v_h\cdot{n_x}|}{2}\ljmp w_h\rjmp,\ljmp z_h\rjmp\right>_{\EIx\times\W_v} \nonumber \\
    &\quad+ \left<v_hw_h,{n_x}z_h\right>_{\partial\W_+},\label{eqn:A}\\
    \mathcal{B}(w_h,z_h) &= -(Ew_h,\grad_v z_h)_\W + \left< E\lavg w_h\ravg+\frac{|E\cdot{n_v}|}{2}\ljmp w_h\rjmp,\ljmp z_h\rjmp\right>_{\W_x\times\EIv},\label{eqn:B}\\
    \mathcal{D}(w_h,z_h) &= \left< E\mhalfh P(w_h),{n_v} z_h\right>_{\W_x\times\partial\W_v} \label{eqn:D}\\
    P(w_h) &= (\mhalfh, w_h)_{\W_v}\\
    \mathcal{Q}(w_h,z_h) &= \left(\omega (\mhalfh P(w_h)-w_h),z_h\right)_\W,\label{eqn:Q}\\
    \mathcal{C}(w_h,z_h) &= \frac{1}{2\theta}(E\cdot v_h w_h, z_h)_\W,\label{eqn:C}\\
    \mR(z_h) &= -\left<v_h g_{-\!,h},{n_x}z_h\right>_{\partial\W_-},\label{eqn:R}
\end{align}
\end{subequations}
and the functions $g_{-\!,h}\in V_h$ and $g_{0,h}\in V_h$ are the discrete inflow and initial data respectively.
\end{prob}

\begin{defn} The discrete number density $\rho_h^\e$ and current density $J_h^\e$ are given by
\begin{align}
    \rho_h^\e &= P(g_h^\e) = (\mhalfh, g_h^\e)_{\W_v},  \label{eqn:rho_h}\\
    J_h^\e &= \frac{1}{\e}P(v_h g_h^\e) 
    = \frac{1}{\e} (\mhalfh v_h, g_h^\e)_{\W_v}.\label{eqn:J_h}
\end{align}
\end{defn}

\begin{remark}
In \Cref{prob:discr},
\begin{enumerate}
    \item The bilinear form $\mA$ and functional $\mR$ are the result of the discretizations of the operator  $v\cdot\grad_x$ with $v$ replaced by $v_h$.  The parameter $\beta$ is a switch between a standard upwinding and a scaled upwinding flux.  If $\b=0$, the flux $\widehat{v_hg}$ is the standard upwinding flux, namely
    \[
        \widehat{v_hg} = 
        \begin{cases}
            v_h\lavg g\ravg + \tfrac{|v_h\cdot{n_x}|}{2}\ljmp g\rjmp &\text{ on } \EIx\times\W_v \\
            v_hg &\text{ on } \partial\W_+ \\
            v_h g_{-\!,h} &\text{ on } \partial\W_-
        \end{cases}
    \]
    In this case, $\rho_h^\e$ will converge to a continuous finite element function as $\e \to 0$; see Section \ref{sect:dde-limit}.  As a result, locking will occur when 
    $k_x = 0$, i.e. when $V_{x,h}$ is comprised of piecewise constant functions in $x$.  If $\b=1$, the flux $\widehat{v_hg}$ instead contains an $\e$-scaled Lax-Friedrichs penalty in the jump.  This modification yields a LDG-like discretization of the drift-diffusion equations; see Section \ref{sect:dde-limit}.
    \item The bilinear forms $\mB$ and $\mD$ are constructed using standard upwind fluxes for the Vlasov operator $E\cdot\grad_v$ but, due to the velocity boundary domain restriction, we weakly impose the boundary condition $g_h^\e=\mhalfh\rho_h^\e$ on $\W_x\times\partial\W_v$.  This provides two benefits.  First, the density $g_h^\e$ will not lose or gain mass out of the velocity boundary. Second, this boundary condition keeps the restriction of $v$ to the bounded domain $\Omega_v$ from polluting the discrete drift-diffusion limit.
    \item The bilinear form $\mQ$ is a standard discretization of the collision operator.  
    \item The bilinear form $\mC$ is a standard discretization of the term $E\cdot v g$, with $v$ replaced by $v_h$.
\end{enumerate}
\end{remark}

\subsection{Other Assumptions}\label{subsec:other_assumpts}

Here we collect any other assumptions used in the analysis of \Cref{prob:discr}.

\begin{assumption}\label{ass:data}
The collision frequency $\omega$ and the electric field $E$ in \eqref{eqn:sc} are specified as $\omega\in L^\infty(\W_x)$ with $0<\omega_{\min}\leq\omega$ on $\W_x$ and $E\in W^{1,\infty}([0,T];L^{\infty}(\W_x))$.  
\end{assumption}

\begin{assumption}\label{ass:bc}
The inflow data in \eqref{eqn:sc} is isotropic, that is, $f_0(x,v)=\rho_0(x)M(v)$.  Additionally we require $g_{0,h}$ in \Cref{prob:discr} to be discretely isotropic, that is, $g_{0,h}=\rho_{0,h}\mhalfh$ where $\rho_{0,h}\in V_{x,h}$ is defined by
\[
(\rho_{0,h},q_h)_{\W_x} = (\rho_0,q_h)_{\W_x}\quad\forall q_h\in V_{x,h}.
\]
\end{assumption}

\Cref{ass:bc} is a common assumption made in the study of the drift-diffusion limit on the continuous PDE \eqref{eqn:sc}; see \cite{masmoudi2007diffusion,Pou1991}.

It follows from \Cref{ass:bc}, \Cref{ass-discrete-root-Max}.\ref{ass-mass}, and \Cref{ass-discrete-root-Max}.\ref{ass-momen} that 
\begin{align}
    \rho_h^\e |_{t=0} &= (\mhalfh, g_{0,h})_{\W_v} = (\mhalfh, \mhalfh)_{\W_v}\rho_{0,h} = \rho_{0,h}, \\
    J_h^\e |_{t=0} &= \label{eqn:init_J} \frac{1}{\e}(v_h\mhalfh,g_{0,h})_{\W_v} = \frac{1}{\e}(v_h\mhalfh,\mhalfh)_{\W_v} \rho_{0,h} = 0.
\end{align}

\begin{assumption}
\label{ass:zero_incoming}
The continuous incoming data $f_{-}$ in \eqref{eqn:sc_eq_inflow}, the discrete incoming data $g_{-,h}$ in \Cref{prob:discr} and, consequently, the functional $\mathcal{R}$ in \Cref{prob:discr} are identically zero.
\end{assumption}
Assumptions like \Cref{ass:zero_incoming} are commonly made in the analysis of diffusion limits to avoid handling complications due to boundary layers. While it is not expected that the equilibrium boundary condition will induce a boundary layer, the case of non-zero incoming data will be treated elsewhere.

\begin{assumption}\label{ass:inv_laplace_bnd}
The inverse Laplacian $S:L^2(\W_x)\to H_0^1(\W_x)$ defined by
\begin{equation}
    (\grad Sq,\grad w)_{\W_x} = (q,w)_{\W_x} \quad\forall w\in H_0^1(\W),
\end{equation}
is a bounded linear operator from $L^2(\W_x)\to H^2(\W_x)\cap H_0^1(\W_x)$, that is,
\begin{equation}
    \|Sq\|_{H^2(\W_x)} \lss \|q\|_{L^2(\W_x)}
\end{equation}
for any $q\in L^2(\W_x)$.
\end{assumption}

\Cref{ass:inv_laplace_bnd} is needed for the proof of stability of the $L^2$ projection $\mSh$ in the $H_h^1(\W_x)$-norm (see \Cref{ass:proj-stable}).  If $\W_x$ is convex, then \Cref{ass:inv_laplace_bnd} is automatically satisfied \cite[Section 3.2]{grisvard2011elliptic}.

\begin{assumption}\label{ass:eps_h_relation}
There is a constant $C>0$ such that $\frac{\e}{h_x}<C$, that is, $h_x$ cannot go to zero faster than $\e$.
\end{assumption}

Assumption \ref{ass:eps_h_relation} is to improve the readability of the paper.  There are several estimates that include bounds of $1+\frac{\e}{h_x}$ and these bounds will not improve any rates of convergence regardless of the choice of $\e$ and $h_x$.  Thus with Assumption \ref{ass:eps_h_relation} we have the bounds $1+\frac{\e}{h_x}\lss 1$ and $1+\sqrt{\frac{\e}{h_x}}\lss 1$.

\section{A Priori Estimates}\label{sect:a_priori_estimates}

In this section we develop space-time stability estimates for $g_h^\e$, the number density $\rho_h^\e$, and the current density $J_h^\e$ in \Cref{prob:discr}.

\subsection{Preliminary Estimates and Identities}

In this subsection we list an inverse and trace inequality, derived from the standard estimates in \cite{Riv2008}, technical interpolation and projections estimates, and a useful integration by parts identity.

\begin{lemma}[Trace Inequality]\label{lem:trace}
For any $z_h\in V_{h}$ and $q_h\in V_{x,h}$ we have
\begin{align}
    \|\ljmp z_h\rjmp\|_{L^2(\EIx\times\W_v)}^2 + \|z_h\|_{L^2(\partial\W_x\times\W_v)}^2 &\leq \frac{C}{h_x}\|z_h\|_{L^2(\W)}^2, \label{eqn:trace_xv}\\
        \|\ljmp q_h\rjmp\|_{L^2(\EIx)}^2 + \|q_h\|_{L^2(\partial\W_x)}^2 &\leq \frac{C}{h_x}\|q_h\|_{L^2(\W_x)}^2, \label{eqn:trace} \\
    \|\ljmp z_h\rjmp\|_{L^2(\W_x\times\EIv)}^2 + \|z_h\|_{L^2(\W_x\times\partial\W_v)}^2 &\leq \frac{C}{h_v}\|z_h\|_{L^2(\W)}^2. \label{eqn:trace_vx}
\end{align}
Here $C>0$ is an $\e$, $h_x$, and $h_v$-independent constant.  $C$ depends on the polynomial degree of $V_{h}$ and other $h_x$ and $h_v$-independent mesh parameters of $\mathcal{T}_{x,h}$ and $\mathcal{T}_{v,h}$.
\end{lemma}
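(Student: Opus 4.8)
The plan is to reduce all three estimates to a single \emph{elementwise} trace inequality for polynomials and then assemble by summation and a tensor-product slicing argument. First I would recall the local trace inequality underlying \cite{Riv2008}: for any cell $K$ of diameter $h_K$ and any polynomial $p$ on $K$, a scaling argument combined with shape regularity yields $\|p\|_{L^2(\partial K)}^2 \lss h_K^{-1}\|p\|_{L^2(K)}^2$, where the hidden constant depends only on the polynomial degree and the shape-regularity constant. This is the single analytic ingredient; everything else is bookkeeping.

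Second, I would establish the purely spatial estimate \eqref{eqn:trace}. On an interior edge $e=\partial K^+\cap\partial K^-$ we have $\ljmp q_h\rjmp = q_h|_{K^+}n_x^+ + q_h|_{K^-}n_x^-$, so the triangle inequality gives $\|\ljmp q_h\rjmp\|_{L^2(e)}^2 \lss \|q_h|_{K^+}\|_{L^2(e)}^2 + \|q_h|_{K^-}\|_{L^2(e)}^2$. Summing the jump term over $\EIx$ and the trace term over $\partial\W_x$, each face of each cell is counted at most twice, so the left-hand side of \eqref{eqn:trace} is controlled by $\sum_{K}\|q_h\|_{L^2(\partial K)}^2 \lss \sum_K h_K^{-1}\|q_h\|_{L^2(K)}^2$. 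Quasi-uniformity of $\mathcal{T}_{x,h}$ supplies the uniform lower bound $h_K \gss h_x$, hence $h_K^{-1}\lss h_x^{-1}$, and summing the right-hand side over $K$ recovers $\tfrac{C}{h_x}\|q_h\|_{L^2(\W_x)}^2$, which is \eqref{eqn:trace}.

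Third, I would obtain \eqref{eqn:trace_xv} from \eqref{eqn:trace} using the tensor structure $V_h = V_{x,h}\otimes V_{v,h}$. For a.e.\ fixed $v\in\W_v$ the slice $z_h(\cdot,v)$ belongs to $V_{x,h}$, so \eqref{eqn:trace} applies pointwise in $v$; integrating that inequality over $\W_v$ and invoking Tonelli to interchange the $v$-integral with the surface integrals over $\EIx$ and $\partial\W_x$ produces exactly \eqref{eqn:trace_xv}. The remaining estimate \eqref{eqn:trace_vx} follows by the identical freezing argument with the roles of $x$ and $v$ interchanged, now freezing $x$ so that $z_h(x,\cdot)\in V_{v,h}$, and invoking the mesh conditions on $\mathcal{T}_{v,h}$ from \Cref{subsect:discret_Max} to supply both the elementwise trace inequality in $v$ and the uniform lower bound $h_K\gss h_v$ needed to replace $h_K^{-1}$ by $h_v^{-1}$.

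I expect the main obstacle to be organizational rather than analytic: one must verify carefully that freezing a velocity (resp.\ spatial) coordinate keeps a tensor-product DG function in the correct lower-dimensional space $V_{x,h}$ (resp.\ $V_{v,h}$), that Tonelli's theorem legitimately interchanges the slicewise estimate with integration over the complementary variable, and that the assumptions on $\mathcal{T}_{v,h}$ are strong enough to yield the uniform $h_v^{-1}$ scaling in \eqref{eqn:trace_vx}, since unlike $\mathcal{T}_{x,h}$ its quasi-uniformity is only specified later in \Cref{subsect:discret_Max}.
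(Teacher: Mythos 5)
Your proposal is correct and matches how the paper handles this result: the paper offers no written proof, simply deriving the lemma from the standard elementwise trace estimates in \cite{Riv2008}, which is exactly the local scaling inequality you start from, assembled over elements with quasi-uniformity and then tensorized by the slicing/Tonelli argument you describe. Your only flagged concern is also the right one to flag --- the uniform $h_v^{-1}$ scaling in \eqref{eqn:trace_vx} requires the shape-regularity/quasi-uniformity of $\mathcal{T}_{v,h}$, which the paper absorbs into the ``$h_v$-independent mesh parameters of $\mathcal{T}_{v,h}$'' on which the constant $C$ is allowed to depend.
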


\begin{lemma}[Inverse Inequality]\label{lem:inverse}
For any $q_h\in V_{x,h}$ we have
\begin{align}\label{eqn:inverse}
    \|\grad q_h\|_{L^2(\W_x)} \leq C h_x^{-1}\|q_h\|_{L^2(\W_x)}
\end{align}
where $C>0$ is some $\e$ and $h_x$-independent constant that depends on the the polynomial degree of $V_{x,h}$. 
\end{lemma}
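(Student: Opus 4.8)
The plan is to prove the estimate cell-by-cell and then sum, using the standard scaling argument between each physical cell and a fixed reference cell (the same argument underlying the cited estimates of \cite{Riv2008}). First I would localize: since $q_h$ is piecewise polynomial and the broken gradient $\grad q_h$ is defined cellwise,
\[
\|\grad q_h\|_{L^2(\W_x)}^2 = \sum_{K \in \mathcal{T}_{x,h}} \|\grad q_h\|_{L^2(K)}^2,
\]
so it suffices to establish the local bound $\|\grad q_h\|_{L^2(K)} \leq C h_x^{-1} \|q_h\|_{L^2(K)}$ on each cell $K$ with $C$ independent of $K$ and $h_x$; summing these and using $\|q_h\|_{L^2(\W_x)}^2 = \sum_K \|q_h\|_{L^2(K)}^2$ then yields the global estimate.

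Second, for each $K$ let $F_K : \hat K \to K$ be the affine reference map with Jacobian matrix $B_K$, and set $\hat q = q_h \circ F_K \in \mathbb{Q}_{k_x}(\hat K)$. On the finite-dimensional space $\mathbb{Q}_{k_x}(\hat K)$ the seminorm $\hat p \mapsto \|\grad_{\hat x}\hat p\|_{L^2(\hat K)}$ and the norm $\hat p \mapsto \|\hat p\|_{L^2(\hat K)}$ are both continuous, so by equivalence of norms on a finite-dimensional space there is a constant $\hat C = \hat C(k_x,\hat K)$, depending only on the polynomial degree and the reference cell, with
\[
\|\grad_{\hat x} \hat q\|_{L^2(\hat K)} \leq \hat C \|\hat q\|_{L^2(\hat K)}.
\]

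Third, I would transfer this reference estimate back to $K$ using the chain rule $\grad_x q_h = B_K^{-T}\grad_{\hat x}\hat q$ together with the change of variables $\dx[x] = |\det B_K|\,\dx[\hat x]$, which give
\[
\|\grad_x q_h\|_{L^2(K)} \leq \|B_K^{-1}\|\,|\det B_K|^{1/2}\,\|\grad_{\hat x}\hat q\|_{L^2(\hat K)}
\quad\text{and}\quad
\|\hat q\|_{L^2(\hat K)} = |\det B_K|^{-1/2}\,\|q_h\|_{L^2(K)}.
\]
Combining with the reference estimate cancels the $|\det B_K|$ factors and leaves $\|\grad_x q_h\|_{L^2(K)} \leq \hat C\,\|B_K^{-1}\|\,\|q_h\|_{L^2(K)}$. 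The remaining task is to bound $\|B_K^{-1}\| \lss h_x^{-1}$ uniformly in $K$: shape regularity of $\mathcal{T}_{x,h}$ controls the aspect ratio of each cell so that $\|B_K^{-1}\|$ is comparable to $(\diam K)^{-1}$, and quasi-uniformity ensures $\diam K$ is comparable to $h_x$ for every cell.

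The main obstacle, such as it is, lies only in this final step of keeping the constant uniform across all cells, and it is resolved entirely by the shape-regularity and quasi-uniformity hypotheses on $\mathcal{T}_{x,h}$ recorded in \Cref{subsec:notation}; no new ideas beyond tracking the scaling of $B_K$ and $\det B_K$ are needed. Everything else is the classical reference-element argument.
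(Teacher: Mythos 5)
Your proof is correct and is exactly the standard reference-element scaling argument that the paper itself relies on: the paper does not prove \Cref{lem:inverse} at all, but simply cites the classical estimates in \cite{Riv2008}, whose underlying proof is the cell-by-cell localization, norm equivalence on the reference element, and Jacobian scaling controlled by shape regularity and quasi-uniformity that you describe. No discrepancy to report.
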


\begin{lemma}[Integration by Parts]  \label{lem:parts}
For any $q_h\in V_{x,h}$ and $\tau_h\in [V_{x,h}]^3$, there holds
\begin{align}\label{eqn:int_by_parts}
\begin{split}
    (q_h,\grad_x \cdot\tau_h)_{\W_x} &= -(\grad_x q_h,\tau_h)_{\W_x} + \left<\lavg q_h\ravg,\ljmp \tau_h\rjmp\right>_{\EIx} 
    + \left<\ljmp q_h\rjmp,\lavg \tau_h\ravg\right>_{\EIx} + \left<q_h{n_x},\tau_h\right>_{\partial\W_x}.
\end{split}
\end{align}
\end{lemma}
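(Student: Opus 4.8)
The plan is to reduce the global identity to the classical divergence theorem on each cell and then reassemble the resulting boundary integrals into jump and average contributions. Since $q_h\in V_{x,h}$ and $\tau_h\in [V_{x,h}]^3$ are polynomials on each $K\in\mathcal{T}_{x,h}$, the field $q_h\tau_h$ is smooth on $K$ and $\grad_x\cdot\tau_h$ denotes the piecewise divergence, so on each cell the ordinary integration-by-parts formula applies:
\[
(q_h,\grad_x\cdot\tau_h)_{K} = -(\grad_x q_h,\tau_h)_{K} + \left< q_h,\tau_h\cdot n_K\right>_{\partial K},
\]
where $n_K$ is the outward unit normal of $K$. Summing over all $K\in\mathcal{T}_{x,h}$ immediately produces the two volume terms $(q_h,\grad_x\cdot\tau_h)_{\W_x}$ and $-(\grad_x q_h,\tau_h)_{\W_x}$ of \eqref{eqn:int_by_parts}, so it remains only to show that $\sum_{K}\left< q_h,\tau_h\cdot n_K\right>_{\partial K}$ equals the three edge terms on the right-hand side.

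Next I would partition each $\partial K$ into its interior edges (those lying in $\EIx$) and its boundary edges (those lying in $\partial\W_x$). Each interior edge $e=\partial K^+\cap\partial K^-$ is shared by exactly two cells and therefore contributes twice to the sum, once with each one-sided trace, while each boundary edge contributes once; the latter assemble directly into $\left< q_h n_x,\tau_h\right>_{\partial\W_x}$. For an interior edge, using $n_x^-=-n_x^+$ together with the definitions of the averages and jumps, I would verify the pointwise algebraic identity
\[
q_h|_{K^+}\tau_h|_{K^+}\cdot n_x^+ + q_h|_{K^-}\tau_h|_{K^-}\cdot n_x^- = \lavg q_h\ravg\ljmp \tau_h\rjmp + \ljmp q_h\rjmp\cdot\lavg \tau_h\ravg,
\]
which is the standard DG ``magic formula'': expanding the two products $\lavg q_h\ravg\ljmp \tau_h\rjmp$ and $\ljmp q_h\rjmp\cdot\lavg \tau_h\ravg$ and collecting terms leaves exactly $\bigl(q_h|_{K^+}\tau_h|_{K^+} - q_h|_{K^-}\tau_h|_{K^-}\bigr)\cdot n_x^+$, matching the left-hand side. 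Integrating this identity over each $e$ and summing over $\EIx$ yields the two interior-skeleton terms, completing the proof.

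The only real obstacle here is bookkeeping rather than analysis: one must keep straight the two different conventions (the scalar jump $\ljmp \tau_h\rjmp$ of a vector versus the vector jump $\ljmp q_h\rjmp$ of a scalar) and the orientation of the normals. It is worth noting that both right-hand terms are orientation independent --- swapping the labels $K^+\leftrightarrow K^-$ leaves the averages and both jumps invariant --- which is consistent with the left-hand sum being manifestly independent of how the two cells adjacent to $e$ are labeled.
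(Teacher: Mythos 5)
Your proof is correct: elementwise integration by parts on each cell followed by the jump/average reassembly (the DG ``magic formula,'' whose algebraic verification you carry out correctly, including the orientation-independence check) is exactly the standard derivation of this identity. The paper states \Cref{lem:parts} without proof, treating it as a standard DG fact drawn from \cite{Riv2008}, so your argument simply supplies the omitted details and coincides with the intended approach.
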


\subsection{Technical Estimates for Drift-Diffusion Analysis} \label{subsect:new_identities}

In this subsection, we present several technical results which are useful both in the analysis in the drift-diffusion limit to \Cref{prob:discr}, and for the future analysis of similar problems.

The first result is an error estimate of an interpolant from $V_{x,h}\to S_{x,h}^0$ if $\beta=0$ and $V_{x,h}\to V_{x,h}^0$ if $\beta=1$.  The result allows us to control a DG function's distance to $S_{x,h}^0$, in the $H_h^1$-norm, by the function's interior jumps and boundary data.  The interpolant onto the conforming finite element space, $I_h^0$, is the KP interpolant from \cite[Theorem 2.2]{Kar2003}.  The construction of $I_h^1$ is achieved in a similar manner and proved below.  

\begin{lemma}[Conforming Interpolant]\label{ass:interp}
There is an interpolant $I_h^\beta:V_{x,h}\to S_{x,h}^0$ if $\beta=0$ and $I_h^\beta:V_{x,h}\to V_{x,h}^0$ if $\beta=1$ such that for any $q_h\in V_{x,h}$ we have
\begin{subequations}\label{eqn:FEM_interp}
\begin{align}
    \|q_h-I_h^0q_h\|_{H_h^1(\W_x)}^2 &\lss \frac{1}{h_x}\|q_h\|_{L^2(\partial\W_x)}^2 + \frac{1}{h_x}\|\ljmp q_h\rjmp \|_{L^2(\EIx)}^2, \label{eqn:FEM_interp_0}\\
    \|q_h-I_h^1q_h\|_{H_h^1(\W_x)}^2 &\lss \frac{1}{h_x}\|q_h\|_{L^2(\partial\W_x)}^2. \label{eqn:FEM_interp_1}
\end{align}
\end{subequations}
\end{lemma}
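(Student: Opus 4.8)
The plan is to treat $\beta=0$ and $\beta=1$ by entirely different mechanisms, since the target spaces $S_{x,h}^0$ (continuous) and $V_{x,h}^0$ (discontinuous) demand different correctors. For $\beta=0$ I would take $I_h^0$ to be the Karakashian--Pascal averaging operator of \cite[Theorem 2.2]{Kar2003}, arranged so that the degrees of freedom attached to nodes on $\partial\W_x$ are set to zero and hence its range is $S_{x,h}^0$. That result supplies, for $m=0,1$, a bound of the form $\sum_K \|D^m(q_h-I_h^0 q_h)\|_{L^2(K)}^2 \lss \sum_e h_e^{1-2m}\|\ljmp q_h\rjmp\|_{L^2(e)}^2$, where the edge sum runs over $\EIx$ together with the boundary faces, the jump on a boundary face being read as the trace of $q_h$. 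Taking $m=1$ and using quasi-uniformity ($h_e\sim h_x$) controls the gradient part of the $H_h^1(\W_x)$-norm by the right-hand side of \eqref{eqn:FEM_interp_0}. The other two terms of $\|q_h-I_h^0 q_h\|_{H_h^1(\W_x)}^2$ collapse for free: since $I_h^0 q_h\in S_{x,h}^0$ is continuous with vanishing trace, $\ljmp I_h^0 q_h\rjmp=0$ on $\EIx$ and $I_h^0 q_h=0$ on $\partial\W_x$, so $\tfrac{1}{h_x}\|\ljmp q_h-I_h^0 q_h\rjmp\|_{L^2(\EIx)}^2=\tfrac{1}{h_x}\|\ljmp q_h\rjmp\|_{L^2(\EIx)}^2$ and $\tfrac{1}{h_x}\|q_h-I_h^0 q_h\|_{L^2(\partial\W_x)}^2=\tfrac{1}{h_x}\|q_h\|_{L^2(\partial\W_x)}^2$, which is exactly \eqref{eqn:FEM_interp_0}.

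For $\beta=1$ the target space $V_{x,h}^0$ is itself discontinuous, so there is no continuity to enforce across $\EIx$ and the interior jumps of $q_h$ may be kept intact; only the boundary trace must be killed. I would therefore define $I_h^1$ cellwise as the nodal operator that retains the Lagrange values of $q_h$ at all interior nodes and zeroes the values at nodes lying on $\partial\W_x$. On any cell having no node on $\partial\W_x$ this leaves $q_h$ unchanged, so the corrector $w_h:=q_h-I_h^1 q_h$ is supported on the single layer of boundary cells, vanishes at every interior node, and satisfies $w_h|_{\partial\W_x}=q_h|_{\partial\W_x}$ by construction. Consequently $I_h^1 q_h\in V_{x,h}^0$, and the boundary contribution to the norm is already exact: $\tfrac{1}{h_x}\|q_h-I_h^1 q_h\|_{L^2(\partial\W_x)}^2=\tfrac{1}{h_x}\|q_h\|_{L^2(\partial\W_x)}^2$.

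It then remains to bound the gradient and jump parts of $\|w_h\|_{H_h^1(\W_x)}^2$ by the same quantity, for which the engine is a scaling argument combined with \Cref{lem:inverse,lem:trace}. On a boundary cell $K$, equivalence of norms on the fixed polynomial space transported to $K$ gives $\|w_h\|_{L^2(K)}^2\lss h_x\|q_h\|_{L^2(\partial K\cap\partial\W_x)}^2$, since $w_h|_K$ depends only on the boundary nodal values of $q_h$ (the power $h_x=h_x^3\cdot h_x^{-2}$ encoding the volume-to-face scaling in $\R^3$). The inverse inequality \eqref{eqn:inverse} then yields $\|\grad w_h\|_{L^2(K)}^2\lss h_x^{-2}\|w_h\|_{L^2(K)}^2\lss h_x^{-1}\|q_h\|_{L^2(\partial K\cap\partial\W_x)}^2$, and summing over the boundary layer controls the gradient part. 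For the jumps, an edge $e\in\EIx$ contributes only when it is a face of a boundary cell $K$, where $\ljmp w_h\rjmp=w_h|_K$ because $w_h$ vanishes on the neighbouring cell; the trace inequality \eqref{eqn:trace} with the previous bound gives $\|w_h\|_{L^2(e)}^2\lss h_x^{-1}\|w_h\|_{L^2(K)}^2\lss \|q_h\|_{L^2(\partial K\cap\partial\W_x)}^2$, so that $\tfrac{1}{h_x}\|\ljmp w_h\rjmp\|_{L^2(\EIx)}^2\lss \tfrac{1}{h_x}\|q_h\|_{L^2(\partial\W_x)}^2$. Collecting the three contributions produces \eqref{eqn:FEM_interp_1}.

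The main obstacle I anticipate is the bookkeeping for $\beta=1$ at the interface between the boundary-cell layer and the interior: one must confirm that modifying only boundary cells does not spoil the jump estimate on the shared interior faces, which is precisely why the one-sided identity $\ljmp w_h\rjmp=w_h|_K$ is paired with the trace inequality. A secondary technical point is verifying that the Karakashian--Pascal operator can be arranged to land in $S_{x,h}^0$ while still producing the boundary-trace term in the stated shape; this is where shape regularity and quasi-uniformity of $\mathcal{T}_{x,h}$ are needed to convert the edgewise weights $h_e$ into the uniform factor $h_x$.
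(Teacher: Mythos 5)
Your proposal is correct and takes essentially the same route as the paper: for $\beta=0$ it invokes \cite[Theorem 2.2]{Kar2003} exactly as the paper does, and for $\beta=1$ it uses the paper's construction (zero the Lagrange coefficients at nodes on $\partial\W_x$, keep the interior ones) with the corrector bounded by the same kind of scaling arguments. The only difference is packaging—you bound the corrector cellwise on boundary cells by the boundary trace and then apply \Cref{lem:inverse,lem:trace}, whereas the paper bounds the basis functions and the boundary-node coefficients separately via \eqref{eqn:pfIh1:1} and (2.21) of \cite{Kar2003}—which amounts to the same estimate.
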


\begin{proof}
The existence of $I_h^0$ and \eqref{eqn:FEM_interp_0} is given in \cite[Theorem 2.2]{Kar2003}.  The construction of $I_h^1$ and proof of \eqref{eqn:FEM_interp_1} uses ingredients and notation from Theorem 2.2 of \cite{Kar2003}.  For $K\in\mathcal{T}_{x,h}$, let $\mathcal{N}_k:=\{x_K^i,i=1\ldots,m\}$ be the Lagrange nodes of $K$ with $\{\phi_K^i,i=1,\ldots,m\}$ be the associated local Lagrange basis functions with $\phi_K^i(x_K^j)=\delta_{ij}$.  Let $\mathcal{N}:=\cup_{K\in\mathcal{T}_{x,h}} \mathcal{N}_K$.  We partition $\mathcal{N}=\mathcal{N}_{\rm{i}}\cup\mathcal{N}_{\rm{b}}$ into the interior nodes $\mathcal{N}_{\rm{i}}$ and the boundary nodes $\mathcal{N}_{\rm{b}}$ with $\mathcal{N}_{\rm i}\cap\mathcal{N}_{\rm b}=\emptyset$.  Given the representation of $q_h\in V_{x,h}$ in the basis $\phi_K^i$, define $I_h^1q_h\in V_{x,h}^0$ by
\begin{equation}\label{eqn:Ih1_def}
    I_hq_h = \sum_{K\in\mathcal{T}_{x,h}}\sum_{i=1}^m
    \begin{cases}
        \alpha_K^i\phi_K^i &\text{ if }x_K^j\in \mathcal{N}_{\rm{i}} \\
        0\phi_K^i &\text{ if } x_K^i\in\mathcal{N}_{\rm{b}}
    \end{cases}
    \quad\text{ where }\quad
    q_h = \sum_{K\in\mathcal{T}_{x,h}} \sum_{i=1}^m \alpha_K^i\phi_K^i.
\end{equation}
From \eqref{eqn:Ih1_def} it is easy to see that $q_h-I_h^1q_h$ is zero at the interior nodes.  Via scaling arguments ($\W_x\subset\R^3$), we have
\begin{subequations} \label{eqn:pfIh1:1}
\begin{align}
    \|\grad\phi_K^i\|_{L^2(K)}^2 &\lss h_x, \\
    \frac{1}{h_x} \sum_{e\in\EIx} \|\ljmp\phi_K^i\rjmp\|_{L^2(e\cap\partial K)}^2 &\lss h_x, \\
    \frac{1}{h_x}\|\phi_K^i\|_{L^2(\partial\W_x\cap\partial K)}^2 &\lss h_x.
\end{align}
\end{subequations}
Thus by \eqref{eqn:pfIh1:1} we have 
\begin{align} \label{eqn:pfIh1:2}
    \|q_h-I_h^1q_h\|_{H_h^1(\W_x)}^2 \lss h_x\sum_{K\in\mathcal{T}_{x,h}}\sum_{i:x_K^i\in\mathcal{N}_{\rm b}} \|\alpha_K^i\|^2 = h_x\sum_{\nu\in\mathcal{N}_{\rm b}}\sum_{x_K^i=\nu} \|\alpha_K^i\|^2.
\end{align}
where the last equality is just re-indexing of the double sum.  From (2.21) of \cite{Kar2003} and the quasi-uniformity of $\mathcal{T}_{x,h}$, we have for all $\nu\in\mathcal{N}_{\rm b}$:
\begin{align} \label{eqn:pfIh1:3}
    \sum_{x_K^i=\nu} \|\alpha_K^i\|^2 \lss h_x^{-1}\|q_h\|_{L^2(\partial\W\cap\partial K)}^2.
\end{align}
Therefore \eqref{eqn:FEM_interp_1} follows from \eqref{eqn:pfIh1:2} and \eqref{eqn:pfIh1:3}.  The proof is complete
\end{proof}

Additionally, we give an $H_h^1$ stability estimate for the $L^2$ projection from $V_{x,h}$ to the spaces $S_{x,h}^0$ and $V_{x,h}^0$.  This interpolant, $\mSh$ is vital to the analysis of evolution problems and this estimate is needed to extend the results of the interpolant $I_h^\beta$ to $S_{x,h}^\beta$; see \eqref{eqn:rho-and-J-stab:7}.  

\begin{lemma} \label{ass:proj-stable}
For $\beta\in\{0,1\}$, the projection $\mSh$ is stable on $V_{x,h}$ with respect to the $H_h^{1}(\W_x)$-norm, that is,
\begin{align}\label{eqn:l2-proj-stab}
    \|\mSh q_h\|_{H_h^1(\W_x)} \lss \|q_h\|_{H_h^1(\W_x)} \quad \forall q_h\in V_{x,h}.
\end{align}
\end{lemma}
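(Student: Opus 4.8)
The plan is to reduce the discontinuous argument $q_h$ into the range of $\mSh$ using the conforming interpolant of \Cref{ass:interp}, and then to absorb the leftover piece with an inverse estimate and a sharp $L^2$ bound. Writing $r_h := q_h - I_h^\beta q_h$ and noting that $I_h^\beta q_h$ already lies in the range of $\mSh$ (namely $S_{x,h}^0$ when $\beta=0$ and $V_{x,h}^0$ when $\beta=1$), we have $\mSh(I_h^\beta q_h) = I_h^\beta q_h$, hence the decomposition
\[
\mSh q_h = I_h^\beta q_h + \mSh r_h .
\]
The conforming part is controlled at once: by the triangle inequality together with \eqref{eqn:FEM_interp}, $\|I_h^\beta q_h\|_{H_h^1(\W_x)} \le \|q_h\|_{H_h^1(\W_x)} + \|r_h\|_{H_h^1(\W_x)} \lss \|q_h\|_{H_h^1(\W_x)}$, since \Cref{ass:interp} bounds $\|r_h\|_{H_h^1(\W_x)}$ by the boundary and jump data of $q_h$, which are themselves part of $\|q_h\|_{H_h^1(\W_x)}$.

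It then remains to estimate $e_h := \mSh r_h$. This function lies in the range of $\mSh$ and therefore has vanishing boundary trace (and, when $\beta=0$, no interior jumps), so its discrete $H^1$ norm collapses to $\|\grad e_h\|_{L^2(\W_x)}$ plus, only for $\beta=1$, the scaled jump seminorm; both are dominated by $\|e_h\|_{L^2(\W_x)}$ through the inverse inequality \eqref{eqn:inverse} and the trace inequality \eqref{eqn:trace}, giving $\|e_h\|_{H_h^1(\W_x)} \lss h_x^{-1}\|e_h\|_{L^2(\W_x)}$. Because $\mSh$ is an $L^2$-orthogonal projection it is an $L^2$ contraction, so $\|e_h\|_{L^2(\W_x)} \le \|r_h\|_{L^2(\W_x)}$, and the argument closes provided one establishes the sharp estimate
\[
\|r_h\|_{L^2(\W_x)} \lss h_x \, \|q_h\|_{H_h^1(\W_x)} ,
\]
which gains a full power of $h_x$ over the $H_h^1$ bound of \Cref{ass:interp} and exactly cancels the $h_x^{-1}$ from the inverse inequality. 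Combining the two pieces yields $\|\mSh q_h\|_{H_h^1(\W_x)} \lss \|q_h\|_{H_h^1(\W_x)}$ uniformly in $\beta\in\{0,1\}$.

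The $L^2$ estimate on $r_h$ is the heart of the proof, and recovering the \emph{full} factor $h_x$ there is where I expect the main difficulty to lie: a naive bound, or a direct duality pairing against $r_h$, only gives an $O(1)$ factor, because the piecewise gradient $\grad_h r_h$ is merely $O(\|r_h\|_{H_h^1(\W_x)})$ and carries no smallness of its own. One concrete route is to repeat the nodal scaling used to prove \Cref{ass:interp} in the $L^2$-norm rather than the $H_h^1$-norm: since $r_h$ vanishes at the interior Lagrange nodes and the local basis satisfies $\|\phi_K^i\|_{L^2(K)}^2 \lss h_x^{3}$ against $\|\grad\phi_K^i\|_{L^2(K)}^2 \lss h_x$, the scaling gains exactly the two extra powers of $h_x$ needed. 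It is precisely to support this sharp $L^2$/best-approximation control — most naturally organized through a Ritz projection and an Aubin--Nitsche duality on the adjoint problem $-\Delta\phi = g$ with $\phi \in H^2(\W_x)\cap H_0^1(\W_x)$ — that the elliptic regularity of \Cref{ass:inv_laplace_bnd} is invoked, the $H^2$ bound $\|\phi\|_{H^2(\W_x)}\lss\|g\|_{L^2(\W_x)}$ furnishing the missing power of $h_x$. Throughout, the only $\beta$-dependence is the jump seminorm present for $\beta=1$, which is absorbed uniformly by the trace inequality \eqref{eqn:trace}.
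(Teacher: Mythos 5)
Your proposal is correct in structure, and the key estimate it rests on is true, but it follows a genuinely different route from the paper. The paper proves this lemma spectrally (via \Cref{lem:proj-stable-energy}): it introduces the discrete Laplacian energy inner product $(\cdot,\cdot)_E$, expands in eigenbases of the associated eigenproblems on $V_{x,h}$ and on the conforming space, and compares the discrete solution operators $T_h$ and $T_h^0$ to the continuous inverse Laplacian $S$ with $\mathcal{O}(h_x)$ error bounds --- which is exactly where \Cref{ass:inv_laplace_bnd} (elliptic $H^2$ regularity) enters. Your argument instead writes $\mSh q_h = I_h^\beta q_h + \mSh\bigl(q_h - I_h^\beta q_h\bigr)$, controls the conforming part by \Cref{ass:interp}, and handles the remainder by the chain ``inverse inequality $+$ $L^2$-contraction of $\mSh$ $+$ sharp $L^2$ interpolation bound $\|q_h - I_h^\beta q_h\|_{L^2(\W_x)} \lss h_x\|q_h\|_{H_h^1(\W_x)}$.'' That sharp bound does hold: it follows from the same nodal scaling used in the paper's proof of \Cref{ass:interp}, replacing $\|\grad\phi_K^i\|_{L^2(K)}^2\lss h_x$ by $\|\phi_K^i\|_{L^2(K)}^2\lss h_x^3$ (and, for $\beta=0$, it is the $s=0$ case of Theorem 2.2 in the cited Karakashian--Pascal reference). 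Two points deserve correction, though neither breaks the argument. First, your statement that $r_h$ vanishes at the interior Lagrange nodes is accurate only for $\beta=1$; for $\beta=0$ the KP interpolant averages nodal values, so the interior-node coefficients of $r_h$ are nonzero but are controlled by the jumps via KP (2.21) --- the scaling computation goes through unchanged. Second, the closing appeal to a Ritz projection and Aubin--Nitsche duality is both muddled ($r_h$ is an interpolation error, not a Galerkin error, so duality does not apply to it directly) and unnecessary: the nodal-scaling route needs no elliptic regularity whatsoever. This is actually a strength of your approach relative to the paper's --- it would establish \Cref{ass:proj-stable} without invoking \Cref{ass:inv_laplace_bnd}, hence also on non-convex domains, whereas the paper's spectral comparison argument is tied to the $H^2$ regularity of the inverse Laplacian.
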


We will prove \Cref{ass:proj-stable} by first showing an equivalent result. Given $\gamma > 0$, Define the DG discrete Laplacian energy on $V_{x,h}$ via the symmetric bilinear form
\begin{align}
\begin{split}
    (q_h,z_h)_E &:= (\grad q_h,\grad z_h)_{\W_x} - \left< \lavg \grad q_h\ravg,\ljmp z_h\rjmp\right>_{\EIx} -\left< \ljmp q_h\rjmp,\lavg \grad z_h\ravg\right>_{\EIx} \\
    &\quad-\left< \grad q_h,z_h n_x \right>_{\partial\W_x} - \left< q_hn_x,\grad z_h \right>_{\partial\W_x} + \frac{\gamma}{h_x}\left<\ljmp q_h\rjmp,\ljmp z_h\rjmp\right>_{\EIx} + \frac{\gamma}{h_x}\left<q_h,z_h\right>_{\partial\W_x}.
\end{split}
\end{align}
Standard DG elliptic theory shows that there exists a $\gamma_* > 0$, independent of $h$, such that $(\cdot,\cdot)_E$ is an inner product on $V_{x,h}$ for all $\gamma>\gamma_*$ \cite{Riv2008}.  We fix some $\gamma > \gamma_*$ and therefore $(\cdot,\cdot)_E$ induces a norm $\|\cdot\|_E$ on $V_{x,h}$.  
Moreover, by use of the trace inequality, \Cref{lem:trace}, we have
\begin{equation} \label{eqn:energy-h1h-bnd}
    \|q_h\|_{\h1hg} \lss \|q_h\|_E \lss \|q_h\|_{\h1hg}
\end{equation}
for all $q_h\in V_{x,h}$.  From \eqref{eqn:energy-h1h-bnd}, the following lemma immediately implies \Cref{ass:proj-stable}. 

\begin{lemma} \label{lem:proj-stable-energy}
The $L^2$-projection $\mSh$ is stable on $V_{x,h}$ with respect to $\|\cdot\|_E$, that is, there is a constant $C>0$, independent of $h_x$, such that
\begin{align}\label{eqn:l2-proj-stab-energy:0}
    \|\mSh q_h\|_{E} \leq C \|q_h\|_E \quad \forall q_h\in V_{x,h}.
\end{align}
\end{lemma}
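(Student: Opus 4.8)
The plan is to move everything to the discrete $H^1$ norm $\|\cdot\|_{\h1hg}$ and then exploit that the projection error is itself a \emph{fully discrete} function, so that an inverse estimate may be applied to it. By the norm equivalence \eqref{eqn:energy-h1h-bnd} it suffices to show $\|\mSh q_h\|_{\h1hg}\lss\|q_h\|_{\h1hg}$. Writing $w_h=\mSh q_h$ and $e=q_h-w_h$, note that $w_h$ lies in the conforming space ($S_{x,h}^0$ if $\beta=0$, $V_{x,h}^0$ if $\beta=1$), so $e\in V_{x,h}$. A triangle inequality, $\|w_h\|_{\h1hg}\le\|q_h\|_{\h1hg}+\|e\|_{\h1hg}$, reduces the whole claim to proving $\|e\|_{\h1hg}\lss\|q_h\|_{\h1hg}$.

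The next step is to trade the discrete $H^1$ norm of $e$ for an $L^2$ norm. Since $e\in V_{x,h}$, the inverse inequality (\Cref{lem:inverse}) controls $\|\grad e\|_{L^2(\W_x)}$ and the trace inequality (\Cref{lem:trace}, specifically \eqref{eqn:trace}) controls the two $\tfrac1{h_x}$-weighted facet terms in $\|e\|_{\h1hg}$, giving $\|e\|_{\h1hg}\lss h_x^{-1}\|e\|_{L^2(\W_x)}$. Hence it remains to establish the $O(h_x)$ projection-error bound $\|e\|_{L^2(\W_x)}=\|q_h-\mSh q_h\|_{L^2(\W_x)}\lss h_x\|q_h\|_{\h1hg}$, the factor $h_x$ being exactly what cancels the $h_x^{-1}$ above.

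For the projection-error bound I would use that $\mSh$ is the $L^2$-best approximation onto the conforming space, so $\|e\|_{L^2(\W_x)}\le\|q_h-I_h^\beta q_h\|_{L^2(\W_x)}$ with $I_h^\beta$ the interpolant of \Cref{ass:interp}. A scaling argument, identical in spirit to the one proving \eqref{eqn:FEM_interp_1} and the $L^2$ analogue of the estimate behind \eqref{eqn:FEM_interp_0}, then gives $\|q_h-I_h^\beta q_h\|_{L^2(\W_x)}^2\lss h_x\big(\|\ljmp q_h\rjmp\|_{L^2(\EIx)}^2+\|q_h\|_{L^2(\partial\W_x)}^2\big)\lss h_x^2\|q_h\|_{\h1hg}^2$, which closes the argument.

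The main obstacle is precisely this last $L^2$-approximation estimate, since \Cref{ass:interp} as stated supplies only the $H_h^1$ bounds; one must re-run the scaling arguments to obtain the matching $L^2$ bounds rather than quote the lemma directly. An alternative that bypasses $L^2$ interpolation estimates, and is presumably why \Cref{ass:inv_laplace_bnd} is invoked, is to bound $\|q_h-\mSh q_h\|_{L^2(\W_x)}$ by an Aubin--Nitsche duality argument: set $\psi=S(q_h-\mSh q_h)\in H^2(\W_x)\cap H_0^1(\W_x)$, use the elliptic regularity $\|\psi\|_{H^2(\W_x)}\lss\|q_h-\mSh q_h\|_{L^2(\W_x)}$ from \Cref{ass:inv_laplace_bnd}, and exploit the orthogonality $(q_h-\mSh q_h,v)_{\W_x}=0$ for $v\in S_{x,h}^0$ against a conforming interpolant of $\psi$. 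The delicate point in that route is the element-by-element integration by parts of the dual solution, which produces interior-jump and boundary facet contributions of $e$ that must be reabsorbed; keeping these controlled uniformly in $h_x$ for both values of $\beta$ is where most of the effort would go.
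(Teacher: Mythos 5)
Your argument is correct, but it takes a genuinely different route from the paper's. The paper proves the lemma spectrally: it introduces $L^2$-orthonormal eigenbases of the energy form on $V_{x,h}$ and on the conforming space, the discrete solution operators $T_h$, $T_h^0$ and the square-root operator $A_h^0$, reduces everything to the bound $\|T_hA_h^0w_h\|_E\lss \|w_h\|_{L^2(\W_x)}$, and proves that bound by comparing $T_h$ with $T_h^0$ through their common approximation of the continuous inverse Laplacian $S$ --- which is precisely where \Cref{ass:inv_laplace_bnd} (elliptic $H^2$ regularity) is consumed. Your route --- norm equivalence \eqref{eqn:energy-h1h-bnd}, triangle inequality, the inverse and trace inequalities applied to the discrete function $q_h-\mSh q_h$, the best-approximation property of $\mSh$, and an $L^2$ version of the conforming interpolation estimate --- avoids the spectral machinery entirely and, notably, never uses \Cref{ass:inv_laplace_bnd}, so it would prove the lemma on domains where elliptic regularity fails (both proofs still need quasi-uniformity through the inverse estimate). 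The one ingredient not stated in the paper, which you correctly flag, is the $L^2$ analogue of \Cref{ass:interp}, namely $\|q_h-I_h^\beta q_h\|_{L^2(\W_x)}^2\lss h_x\bigl(\|\ljmp q_h\rjmp\|_{L^2(\EIx)}^2+\|q_h\|_{L^2(\partial\W_x)}^2\bigr)$. This does hold: it is contained in Theorem 2.2 of the cited Karakashian--Pascal reference, and alternatively follows by rerunning the paper's proof of \eqref{eqn:FEM_interp_1} with the scaling $\|\phi_K^i\|_{L^2(K)}^2\lss h_x^3$ in place of $\|\grad\phi_K^i\|_{L^2(K)}^2\lss h_x$; combined with $\frac{1}{h_x}\bigl(\|\ljmp q_h\rjmp\|_{L^2(\EIx)}^2+\|q_h\|_{L^2(\partial\W_x)}^2\bigr)\leq\|q_h\|_{H_h^1(\W_x)}^2$ this gives the $O(h_x)$ bound that cancels the $h_x^{-1}$ from your inverse/trace step and closes the argument. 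One small correction: the Aubin--Nitsche alternative you sketch is not ``why \Cref{ass:inv_laplace_bnd} is invoked'' --- in the paper that assumption enters through the Galerkin error estimates for $T_h$ and $T_h^0$ against $S$, not through a duality bound on the projection error --- and given the discrete interpolation estimate above, the duality route is unnecessary.
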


\begin{proof}
We will focus on the case $\beta = 0$. The proof is written such that the $\beta=1$ case is shown by substituting $S_{x,h}^0$ with $V_{x,h}^0$.  Let $\{\psi_i\}_{i=1}^M\subset V_{x,h}$ be an orthonormal eigenbasis in $L^2(\W_x)$ with associated eigenvalues $\lambda_i>0$, in increasing order, for the following eigenvalue problem: find $\psi\in V_{x,h}$ and $\lambda\in\R$ such that  
\begin{equation} \label{eqn:l2-proj-stab-energy:1}
    (\psi,q_h)_E = \lambda(\psi,q_h)_{\W_x}  \quad\forall q_h\in V_{x,h}.
\end{equation}

Similarly, let $\{\phi_j\}_{j=1}^N\subset S_{x,h}^0$ be an orthonormal eigenbasis in $L^2(\W_x)$ with associated eigenvalues $\mu_j>0$, in increasing order, for the following eigenvalue problem on $S_{x,h}^0$: Find $\phi\in S_{x,h}^0$ and $\mu\in\R$ such that
\begin{equation} \label{eqn:l2-proj-stab-energy:2}
   (\phi,w_h)_E = \mu(\phi,w_h)_{\W_x}  \quad\forall w_h\in S_{x,h}^0.
\end{equation}

We also define the solution operators $T_h:V_{x,h}\to V_{x,h}$, and $T_h^0:S_{x,h}^0\to S_{x,h}^0$ by
\begin{align}
    (T_hq_h,z_h)_{E} &= (q_h,z_h)_{\W_x} \quad\forall z_h\in V_{x,h}, \label{eqn:l2-proj-stab-energy:3a}\\
    (T_h^0w_h,s_h)_E &= (w_h,s_h)_{\W_x} \quad\forall s_h\in S_{x,h}^0. \label{eqn:l2-proj-stab-energy:3b}
\end{align}

We note that while written using the inner product $(\cdot,\cdot)_E$, \eqref{eqn:l2-proj-stab-energy:3b} is the standard continuous Galerkin finite element method for the Poisson problem and \eqref{eqn:l2-proj-stab-energy:2} is its respective eigenvalue problem.  Note $\psi_i$ and $\phi_j$ are eigenvectors of $T_h$ and $T_h^0$ with associated eigenvalues $\lambda_i^{-1}$ and $\mu_j^{-1}$ respectively.  We also recall the inverse Laplacian $S:L^2(\W_x)\to H^2(\W_x)\cap H_0^1(\W_x)$ which is given in \Cref{ass:inv_laplace_bnd}.  Standard continuous and discontinuous Galerkin theory \cite{Bre2008,Riv2008} and \Cref{ass:inv_laplace_bnd} yield the following estimates:
\begin{align}
    \|T_hq_h-Sq_h\|_{E} &\lss h_x \|Sq_h\|_{H^2(\W)} \lss h_x\|q_h\|_{L^2(\W_x)} \quad\forall q_h\in V_{x,h}, \label{eqn:l2-proj-stab-energy:5a}\\
    \|T_h^0w_h-Sw_h\|_{E} &\lss h_x\|Sw_h\|_{H^2(\W)} \lss h_x\|w_h\|_{L^2(\W_x)} \quad\forall w_h\in S_{x,h}^0. \label{eqn:l2-proj-stab-energy:5b}
\end{align}
Therefore by \eqref{eqn:l2-proj-stab-energy:5a}-\eqref{eqn:l2-proj-stab-energy:5b} we have
\begin{equation}
    \|T_hw_h-T_h^0w_h\|_{E} \lss h_x\|w_h\|_{L^2(\W_x)} \quad\forall w_h\in S_{x,h}^0. \label{eqn:l2-proj-stab-energy:6}
\end{equation}

Given $q_h\in V_{x,h}$, let $\alpha\in\R^M$ be the coefficients of $q_h$ w.r.t the basis $\{\psi_i\}$ given by $
\alpha_i = (q_h,\psi_i)_{\W_x}$.  Similarly, given $w_h\in S_{x,h}^0$, let $\xi\in\R^N$ be the coefficients of $w_h$ w.r.t the basis $\{\phi_j\}$ given by $
\xi_j = (w_h,\phi_j)_{\W_x}$.  Due to the eigenbasis decomposition we have
\begin{subequations}
\begin{align}
    q_h &= \sum_{i=1}^M \alpha_i\psi_i,& \qquad w_h &= \sum_{j=1}^N \xi_j\phi_j, \label{eqn:l2-proj-stab-energy:7a}\\
    \|q_h\|_{L^2(\W_x)}^2 &= \sum_{i=1}^M \alpha_i^2 = |\alpha|^2,&\|w_h\|_{L^2(\W_x)}^2 &= \sum_{j=1}^N \xi_j^2 = |\xi|^2, \label{eqn:l2-proj-stab-energy:7b}\\
    \|q_h\|_{E}^2 &= \sum_{i=1}^M \lambda_i\alpha_i^2=:|\alpha|_E^2, 
    & \| w_h\|_{E}^2 &= \sum_{i=1}^M \mu_j\xi_j^2=:|\xi|_E^2. \label{eqn:l2-proj-stab-energy:7c}
\end{align}
\end{subequations}

Through the decompositions in \eqref{eqn:l2-proj-stab-energy:7a}, we also have
\begin{align}\label{eqn:l2-proj-stab-energy:8}
    T_hq_h &= \sum_{i=1}^M \frac{\alpha_i}{\lambda_i}\psi_i,& T_h^0w_h &= \sum_{j=1}^N \frac{\xi_j}{\mu_j}\phi_j.
\end{align}
Additionally, we define the operator $A_h^0:S_{x,h}^0\to S_{x,h}^0$ by 
\begin{align}\label{eqn:l2-proj-stab-energy:9}
   A_h^0w_h &= \sum_{j=1}^N \xi_j\mu_j^{1/2}\phi_j.
\end{align}
The fact that $\{\phi_j\}$ is an orthonormal set in $L^2(\W_x)$ and \eqref{eqn:l2-proj-stab-energy:7c} yield
\begin{align}\label{eqn:l2-proj-stab-energy:10}
   \|A_h^0w_h\|_{L^2(\W_x)} = \| w_h\|_{E}.
\end{align}
Also, $\phi_j$ is an eigenvector of $A_h^0$ with associated eigenvalue $\mu_j^{1/2}$.

Note that \eqref{eqn:l2-proj-stab-energy:0} is equivalent to uniformly bounding
\begin{equation}\label{eqn:l2-proj-stab-energy:11}
    \sup_{q_h\in V_{x,h}\setminus\{0\}} \frac{\|\mSh q_h\|_E}{\|q_h\|_E}= \sup_{\substack{q_h\in V_{x,h}\setminus\{0\}\\w_h\in S_{x,h}^0\setminus\{0\}}} \frac{(\mSh q_h,w_h)_{E}}{\|q_h\|_{E}\| w_h\|_{E}}
\end{equation}
in $h_x$; thus we seek to bound $(\mSh q_h,w_h)_{E}$.  Using \eqref{eqn:l2-proj-stab-energy:2}, \eqref{eqn:l2-proj-stab-energy:7a}, \eqref{eqn:l2-proj-stab-energy:7c}, and \eqref{eqn:l2-proj-stab-energy:10} we have
\begin{align} \label{eqn:l2-proj-stab-energy:12}
\begin{split}
    (\mSh q_h,w_h)_{E} &= \sum_{ij}\alpha_i(\mSh \psi_i,\phi_j)_{E}\xi_j \\
    &= \sum_{ij}\alpha_i (\mSh \psi_i,\phi_j)_{\W_x}\mu_j\xi_j = \sum_{ij}\alpha_i (\psi_i,\phi_j)_{\W_x}\mu_j\xi_j \\
    &= \sum_{ij} \alpha_i\lambda_i^{1/2}(\lambda_i^{-1/2}\psi_i,\mu_j^{1/2}\phi_j)_{\W_x}\xi_j\mu_j^{1/2}  \\
    &= \sum_{ij} \alpha_i\lambda_i^{1/2}(\lambda_i^{-1/2}\psi_i,A_h^0\phi_j)_{\W_x}\xi_j\mu_j^{1/2}  \\
    &\leq |\alpha|_E\|\overline{C}\|_2|\xi|_E = \|\overline{C}\|_2\|q_h\|_{E}\| w_h\|_{E}.
\end{split}
\end{align}
Here $\overline{C}\in\R^{M\times N}$ with $\overline{C}_{ij} = (\lambda_i^{-1/2}\psi_i,A_h^0\phi_j)_{\W_x}$, and
\begin{equation}\label{eqn:l2-proj-stab-energy:13}
    \|\overline{C}\|_2 = \max_{\xi\in\R^{N}\setminus\{0\}}\frac{\xi^T\overline{C}^T\overline{C}\xi}{|\xi|^2}.
\end{equation}
To bound $\|\overline{C}\|_2$, we use the decomposition of \eqref{eqn:l2-proj-stab-energy:7a} along with \eqref{eqn:l2-proj-stab-energy:8} and \eqref{eqn:l2-proj-stab-energy:3a} to compute the following:
\begin{align} \label{eqn:l2-proj-stab-energy:14}
\begin{split} 
    \xi^T\bar{C}^T\bar{C}\xi &= \sum_{jl=1}^N 
 \sum_{i=1}^M\xi_j(A_h^0\phi_j,\lambda_i^{-1/2}\psi_i)_{\W_x} (\lambda_i^{-1/2}\psi_i,A_h^0\phi_l)_{\W_x}\xi_l \\
    &= \sum_{jl=1}^N \xi_j \left(A_h^0\phi_j,\sum_{i=1}^M\frac{(A_h^0\phi_l,\psi_i)_{\W_x}}{\lambda_i}\psi_i\right)_{\W_x} \xi_l \\
    &= \sum_{jl=1}^N \xi_j (A_h^0\phi_j,T_hA_h^0\phi_l)_{\W_x} \xi_l = \sum_{jl=1}^N \xi_j (T_hA_h^0\phi_j,T_hA_h^0\phi_l)_E \xi_l \\
    &= \left(T_hA_h^0 \sum_{j=1}^N \xi_j\phi_j, T_hA_h^0\sum_{l=1}^N \xi_l\phi_l\right)_E \\
    &= (T_hA_h^0w_h,T_hA_h^0w_h)_E. 
\end{split}
\end{align}
Therefore \eqref{eqn:l2-proj-stab-energy:7b}, \eqref{eqn:l2-proj-stab-energy:13}, and \eqref{eqn:l2-proj-stab-energy:14} yield
\begin{equation}\label{eqn:l2-proj-stab-energy:15}
 \|\overline{C}\|_2 = \max_{w_h\in S_{x,h}^0\setminus\{0\}}\frac{\|T_hA_h^0w_h\|_E^2}{\|w_h\|_{L^2(\W_x)}^2}.
\end{equation}
Therefore the desired estimate \eqref{eqn:l2-proj-stab-energy:0} holds provided we can show
\begin{equation}\label{eqn:l2-proj-stab-energy:16}
    \|T_hA_h^0w_h\|_E \lss \|w_h\|_{L^2(\W_x)}
\end{equation}
for all $w_h\in S_{x,h}^0$.  

Let $w_h\in S_{x,h}^0$ with decomposition given in \eqref{eqn:l2-proj-stab-energy:7a}.  To show \eqref{eqn:l2-proj-stab-energy:16}, we will add and subtract $T_h^0A_h^0w_h$ where $T_h^0$ is given in \eqref{eqn:l2-proj-stab-energy:3b} and apply the triangle inequality to obtain
\begin{equation} \label{eqn:l2-proj-stab-energy:17}
    \|T_hA_h^0w_h\|_E \leq \|(T_h-T_h^0)A_h^0w_h\|_E + \|T_h^0A_h^0w_h\|_E.
\end{equation}
To bound $\|(T_h-T_h^0)A_h^0w_h\|_E$, we use \eqref{eqn:l2-proj-stab-energy:6}, \eqref{eqn:l2-proj-stab-energy:10}, \eqref{eqn:energy-h1h-bnd}, and the inverse inequality \eqref{eqn:inverse} which gives
\begin{equation} \label{eqn:l2-proj-stab-energy:18}
\begin{split}
    \|(T_h-T_h^0)A_h^0w_h\|_E \lss h_x\|A_h^0w_h\|_{L^2(\W_x)} = h_x\| w_h\|_{E} \lss h_x\|w_h\|_{H_h^1(\W)} \lss \|w_h\|_{L^2(\W_x)}.
\end{split}
\end{equation}
Direct computation of $T_h^0A_h^0w_h$ gives us
\begin{equation} \label{eqn:l2-proj-stab-energy:19}
    T_h^0A_h^0w_h = \sum_{j=1}^N \xi_jT_h^0A_h^0\phi_j = \sum_{j=1}^N \xi_j \mu_j^{1/2}T_h^0\phi_j = \sum_{j=1}^N \xi_j \mu_j^{-1/2}\phi_j
\end{equation}
Additionally, since $\phi_j$ is an eigenvector to the problem given in \eqref{eqn:l2-proj-stab-energy:2}, then 
\begin{equation} \label{eqn:l2-proj-stab-energy:20}
    (\phi_j,\phi_l)_{E} = \delta_{jl}\mu_j, \quad\text{ and }\quad \|\phi_j\|_E^2 = \mu_j,
\end{equation}
where $\delta_{jl}$ is the Kronecker delta.  Then using \eqref{eqn:l2-proj-stab-energy:19}, \eqref{eqn:l2-proj-stab-energy:20}, and \eqref{eqn:l2-proj-stab-energy:7b} we obtain
\begin{equation} \label{eqn:l2-proj-stab-energy:21}
    \|T_h^0A_h^0w_h\|_{E}^2 = \sum_{j=1}^N \frac{\xi_j^2}{\mu_j}\|\phi_j\|_{E}^2 = \sum_{j=1}^N \xi_j^2 = \|w_h\|_{L^2(\W_x)}^2.
\end{equation}
Therefore \eqref{eqn:l2-proj-stab-energy:17}, \eqref{eqn:l2-proj-stab-energy:18}, and \eqref{eqn:l2-proj-stab-energy:21} imply \eqref{eqn:l2-proj-stab-energy:16}.  The proof is complete.
\end{proof}

\subsection{Initial Estimates}
We first focus on estimates for $g_h^\e$ that will lead to estimates for $\rho_h^\e$ and $J_h^\e$. 

\begin{lemma}\label{lem:stability_identity_and_estimates}
The bilinear forms defined in Problem \ref{prob:discr} satisfy the following bounds
\begin{align}\label{eqn:collision_L2}
    -\mathcal{Q}(g_h^\e,g_h^\e) &\geq \omega_{\min}\left\|g_h^\e - \mhalfh \rho_h^\e\right\|_{L^2(\W)}^2,  \\ 
\label{eqn:ps-garding}
\begin{split}
     \mathcal{C}(g_h^\e,g_h^\e) &\leq 
    \frac{1}{2}\left(2C_1 + \frac{\omega_{\min}}{\varepsilon} \right) 
    \left \|g_h^\e -  \mhalfh \rho_h^\e \right\|_{L^2(\W)}^2 + \frac{\e}{2\omega_{\min}}{C_3^2}\|g_h^\e\|_{L^2(\W)}^2,
\end{split}  \\ 
\label{eqn:Apos}
    \mathcal{A}(g_h^\e,g_h^\e) 
        &=\left< \frac{|v_h\cdot{n_x}|}{2}\ljmp g_h^\e \rjmp ,\ljmp g_h^\e\rjmp \right>_{\partial \W_x\times\W_v}
        + {\e^\b}\left< \frac{|v_h\cdot{n_x}|}{2} \ljmp g_h^\e \rjmp,\ljmp g_h^\e\rjmp \right>_{\EIx\times\W_v} ,
    \\ 
    \label{eqn:Bpos}
    \begin{split}
       \mB(g_h^\e,g_h^\e) + \mD(g_h^\e,g_h^\e) &\geq \left<\frac{|E\cdot{n_v}|}{2}\ljmp g_h^\e\rjmp ,\ljmp g_h^\e\rjmp \right>_{\W_x\times\EIv} - 
       \frac{C_2}{2h_v}\left \|g_h^\e-\mhalfh\rho_h^\e \right\|_{L^2(\W)}^2    
    \end{split}
\end{align}
where
\begin{subequations}\label{eqn:constant_def}
\begin{align}
    C_1 &:= \frac{\|E\cdot v_h\|_{L_T^{\infty}(L^{\infty}(\Omega))}}{2\theta},& C_2&:=C_T\|E\|_{L_T^\infty(L^\infty(\W_x))}, \\
    C_3& := \frac{3}{2\theta}\|E\|_{L_T^\infty(L^\infty(\W_x))}.
\end{align}
\end{subequations}
are constants independent of $\e$, $h_x$, and $h_v$; and $C_T>0$ is a constant from the trace inequality \eqref{eqn:trace},
\end{lemma}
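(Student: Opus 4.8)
The plan is to establish each of the five bounds by testing the relevant bilinear form against $g_h^\e$ itself and carefully tracking the volume and surface contributions. I would proceed form-by-form in the order they appear.

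\medskip

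\noindent\emph{Collision form \eqref{eqn:collision_L2}.} First I would exploit the symmetry built into $\mQ$. Writing $r_h := g_h^\e - \mhalfh\rho_h^\e$, the key algebraic observation is that $P(\mhalfh\rho_h^\e) = (\mhalfh,\mhalfh)_{\W_v}\rho_h^\e = \rho_h^\e$ by Assumption~\ref{ass-discrete-root-Max}.\ref{ass-mass}, so $P(r_h) = 0$. Testing $\mQ(g_h^\e,\cdot)$ against $g_h^\e$ and using self-adjointness of the projection $P$ with weight $\mhalfh$, I expect $-\mQ(g_h^\e,g_h^\e) = (\omega\, r_h, g_h^\e)_\W = (\omega\, r_h, r_h)_\W$ since the $\mhalfh\rho_h^\e$ part is annihilated by $P$. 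Bounding $\omega \geq \omega_{\min}$ pointwise gives \eqref{eqn:collision_L2}.

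\medskip

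\noindent\emph{Electric-field form \eqref{eqn:ps-garding}.} Here the crucial idea is that $\mhalfh\rho_h^\e$ is isotropic in $v$, so the antisymmetric weight $E\cdot v_h$ should integrate to something controllable against it. I would split $g_h^\e = r_h + \mhalfh\rho_h^\e$ and expand $\mC$ bilinearly. The diagonal term $\mC(r_h,r_h)$ is bounded by $C_1\|r_h\|_{L^2(\W)}^2$ using the definition of $C_1$. The cross and pure-equilibrium terms involve $(E\cdot v_h\,\mhalfh\rho_h^\e,\cdot)$; using Assumption~\ref{ass-discrete-root-Max}.\ref{ass-momen} (equivalently the symmetry \ref{ass-symm}) the moment $(v_h\mhalfh,\mhalfh)_{\W_v}=0$ should kill the purely isotropic piece, leaving only cross terms that I would bound by Young's inequality, splitting the weight so that the $r_h$ factor absorbs the large $\frac{\omega_{\min}}{\e}$ coefficient while the $g_h^\e$ factor carries the $\frac{\e}{2\omega_{\min}}C_3^2$ coefficient. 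The constant $C_3$ tracks the $L^\infty$ size of $E$.

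\medskip

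\noindent\emph{Advection forms \eqref{eqn:Apos} and \eqref{eqn:Bpos}.} For $\mA$, I would apply the standard DG upwind-energy identity: integrating the volume term by parts and combining with the interior and outflow surface terms, the skew-symmetric advective part cancels and leaves exactly the boundary jump terms displayed, with the factor $\e^\beta$ sitting on the interior skeleton by construction of the flux. This is a clean equality, which is why \eqref{eqn:Apos} is stated with ``$=$''. For $\mB+\mD$ the same upwind identity in $v$ produces the interior $\EIv$ jump term with the correct sign, but the weak velocity-boundary condition in $\mD$ generates an extra surface term on $\W_x\times\partial\W_v$; I would rewrite $g_h^\e = r_h + \mhalfh\rho_h^\e$ on that boundary, note $P(g_h^\e)=\rho_h^\e$ so the $\mhalfh P(w_h)$ factor matches the equilibrium part exactly, and thereby reduce the leftover boundary contribution to one involving only $r_h$. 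The trace inequality \eqref{eqn:trace_vx} then converts that boundary term into the volume bound $\frac{C_2}{2h_v}\|r_h\|_{L^2(\W)}^2$, with $C_2$ carrying the trace constant $C_T$ and $\|E\|_{L^\infty}$.

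\medskip

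\noindent The main obstacle I anticipate is \eqref{eqn:Bpos}: correctly handling the velocity-boundary term from $\mD$ and showing that, after substituting the equilibrium boundary condition, what remains can be absorbed entirely into $\|g_h^\e-\mhalfh\rho_h^\e\|_{L^2(\W)}$ rather than into an uncontrolled $\|g_h^\e\|_{L^2(\W)}$ term. This is precisely where the design of the discrete Maxwellian (via the moment and symmetry assumptions) and the weak imposition of $g_h^\e = \mhalfh\rho_h^\e$ on $\partial\W_v$ must interact cleanly, so that the boundary penalty does not destroy the eventual uniform-in-$\e$ stability.
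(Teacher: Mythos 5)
Your proposal is correct and follows essentially the same route as the paper: the mass normalization (equivalently $P(g_h^\e-\mhalfh\rho_h^\e)=0$) for the collision bound, the equilibrium-plus-remainder splitting with the zero-moment assumption and Young's inequality (with the $\omega_{\min}/\e$ vs.\ $\e/\omega_{\min}$ weighting, after $\|\rho_h^\e\|_{L^2(\W_x)}\le\|g_h^\e\|_{L^2(\W)}$) for $\mC$, the standard upwind energy identity for $\mA$, and for $\mB+\mD$ the boundary expansion in $g_h^\e = (g_h^\e-\mhalfh\rho_h^\e)+\mhalfh\rho_h^\e$ — which is algebraically the paper's polarization identity — together with the divergence theorem/moment assumption to kill the pure-equilibrium boundary term and the trace inequality \eqref{eqn:trace_vx} to produce the $C_2/(2h_v)$ volume bound.
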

\begin{proof}
For \eqref{eqn:collision_L2}, the definition of $\rho_h^\e$, along with Assumption \ref{ass-discrete-root-Max}.\ref{ass-mass}, implies that
\begin{align}\label{eqn:energy-est-1}
\begin{split}
    (\mhalfh\rho_h^\e,\mhalfh\rho_h^\e)_{\W_v} = |\rho_h^\e|^2 = (\mhalfh\rho_h^\e,g_h^\e)_{\W_v}.
\end{split}
\end{align}
Therefore by expansion we see
\begin{align}\label{eqn:energy-est-2}
\begin{split}
    \|g_h^\e - \mhalfh\rho_h^\e\|_{L^2(\W_v)}^2
    = (g_h^\e,g_h^\e-\mhalfh\rho_h^\e)_{\W_v}.
\end{split}
\end{align}
Using \eqref{eqn:energy-est-2} gives
\begin{align*}
    -\mQ(g_h^\e,g_h^\e) &= \left(\omega,  (g_h^\e,g_h^\e-\mhalfh\rho_h^\e)_{\W_v}\right)_{\W_x} = \left(\omega, \|g_h^\e - \mhalfh\rho_h\|_{L^2(\W_v)}^2 \right)_{\W_x} \\
    &\geq \omega_{\min}\left\|g_h^\e - \mhalfh \rho_h^\e\right\|_{L^2(\W)}^2,
\end{align*}
which is \eqref{eqn:collision_L2}.

For \eqref{eqn:ps-garding}, by Assumption \ref{ass-discrete-root-Max}.\ref{ass-momen},
\begin{align}\label{eqn:ps-garding1}
\begin{split}
    \mC(\mhalfh \rho_h^\e, \mhalfh \rho_h^\e) &=\frac{1}{2\theta} (E\rho_h^\e,\rho_h^\e)_{\W_x}\cdot(v_h\mhalfh,\mhalfh)_{\W_v} \\
    &=-(E\rho_h^\e,\rho_h^\e)_{\W_v}\cdot(\grad_v \mhalfh,\mhalfh)_{\W_v} = 0.
\end{split}
\end{align}
Thus by \eqref{eqn:ps-garding1},
\begin{align}\label{eqn:c-split0}
\begin{split}
    \mathcal{C}(g_h^\e -  \mhalfh \rho_h^\e,g_h^\e-  \mhalfh \rho_h^\e) &= \mC(g_h^\e,g_h^\e) - 2\mathcal{C}(g_h^\e,\mhalfh \rho_h^\e) + \mathcal{C}(\mhalfh\rho_h^\e,\mhalfh\rho_h^\e) 
    \\
    &= \mC(g_h^\e,g_h^\e) - 2\mathcal{C}(g_h^\e,\mhalfh \rho_h^\e) + 2\mathcal{C}(\mhalfh\rho_h^\e,\mhalfh\rho_h^\e) 
    \\
    &= \mC(g_h^\e,g_h^\e) - 2\mathcal{C}(g_h^\e-\mhalfh\rho_h^\e,\mhalfh \rho_h^\e).
\end{split}
\end{align}
Rewriting \eqref{eqn:c-split0} yields
\begin{align}\label{eqn:c-split}
\begin{split}
    \mathcal{C}(g_h^\e,g_h^\e) &=  \mathcal{C}(g_h^\e -  \mhalfh \rho_h^\e,g_h^\e-  \mhalfh \rho_h^\e) + 2\mathcal{C}(g_h^\e-\mhalfh\rho_h^\e,\mhalfh \rho_h^\e)
    \\
    &:= I_1 + I_2.
\end{split}
\end{align}
Applying H\"older's inequality to \eqref{eqn:c-split} gives
\begin{equation}\label{eqn:c-split3}
    |I_1| \leq \frac{\|E\cdot v_h\|_{L^\infty(\W)}}{2\theta} \|g_h^\e -  \mhalfh \rho_h^\e\|_{L^2(\W)}^2
\end{equation}
Using H\"older's inequality, \eqref{eqn:v_h}, and \Cref{ass-discrete-root-Max}.\ref{ass-energy}, we bound $I_2$ as
\begin{align}\label{eqn:c-split4}
\begin{split}
    |I_2| &\leq \tfrac{1}{\theta}\|E\cdot v_h\mhalfh\rho_h^\e\|_{L^2(\W)}\|g_h^\e-\mhalfh\rho_h^\e\|_{L^2(\W)} \\
    &\leq 2\|E\|_{L^\infty(\W_x)}\|\rho_h^\e\|_{L^2(\W_x)}\| \tfrac{1}{2\theta}v_h\mhalfh\|_{L^2(\W_v)}\|g_h^\e-\mhalfh\rho_h^\e\|_{L^2(\W)} \\
    &\leq 2\|E\|_{L^\infty(\W_x)}\|\rho_h^\e\|_{L^2(\W_x)}\| \grad_v\mhalfh\|_{L^2(\W_v)}\|g_h^\e-\mhalfh\rho_h^\e\|_{L^2(\W)} \\
    &\leq \frac{3}{2\theta}\|E\|_{L^\infty(\W_x)}\|\rho_h^\e\|_{L^2(\W_x)}\|g_h^\e-\mhalfh\rho_h^\e\|_{L^2(\W)}.
\end{split}
\end{align}
Meanwhile, integrating \eqref{eqn:energy-est-1} over $\Omega_x$ and applying the Cauchy-Schwarz inequality gives
\begin{equation} \label{eqn:rho-cont-bound}
    \|\rho_h^\e\|_{L^2(\W_x)} = \|\mhalfh \rho_h^\e\|_{L^2(\W)}  \leq \|g_h^\e\|_{L^2(\W)}.
\end{equation}
Substituting \eqref{eqn:c-split3}-\eqref{eqn:rho-cont-bound} into \eqref{eqn:c-split} and invoking Young's inequality gives
\begin{align} \label{eqn:c-split2}
\begin{split}
    \mathcal{C}(g_h^\e,g_h^\e) &\leq C_1 \|g_h^\e -  \mhalfh \rho_h^\e\|_{L^2(\W)}^2 + C_3\|g_h^\e\|_{L^2(\W)} \|g_h^\e -  \mhalfh \rho_h^\e\|_{L^2(\W)}
    \\
    &\leq 
    \left(C_1 + \frac{\omega_{\min}}{2\varepsilon} \right) \|g_h^\e -  \mhalfh \rho_h^\e\|_{L^2(\W)}^2 + \frac{\e}{2\omega_{\min}}C_3^2\|g_h^\e\|_{L^2(\W)}^2.
\end{split}
\end{align}
which yields \eqref{eqn:ps-garding}.

For \eqref{eqn:Apos}, it follows from \Cref{lem:parts} (setting $q_h = g_h^\e$ and $\tau_h = v_h g_h^\e$) that
\begin{equation}
\begin{split}
\label{eqn:apply_parts}
    (v_hg_h^\e,\grad_x g_h^\e)_\W 
        &= \left< v_h\lavg g_h^\e\ravg,\ljmp g_h^\e\rjmp \right>_{\EIx\times\W_v} + \frac{1}{2}\left<v_hg_h^\e,{n_x}g_h^\e\right>_{\partial\W_x\times\W_v}. \\
\end{split}
\end{equation}
Direct substitution of this formula into the definition of $\mathcal{A}$ gives
\begin{align}\label{eqn:apos-1}
\begin{split}
     \mathcal{A}(g_h^\e,g_h^\e) 
     &= \e^\b\left<\frac{|v_h\cdot{n_x}|}{2}\ljmp g_h^\e\rjmp,\ljmp g_h^\e\rjmp\right>_{\EIx\times\W_v} 
     + \left< \frac{|v_h\cdot{n_x}|}{2} g_h^\e, g_h^\e\right>_{\partial \W_x\times\W_v},
\end{split}
\end{align}
which is \eqref{eqn:Apos}.

For \eqref{eqn:Bpos}, a formula similar to \eqref{eqn:apply_parts} gives
\begin{align}\label{eqn:bpos-1}
    \mB(g_h^\e,g_h^\e) = \left<\frac{|E\cdot{n_v}|}{2}\ljmp g_h^\e\rjmp,\ljmp g_h^\e\rjmp\right>_{\W_x\times\EIv} - \frac{1}{2}\left<Eg_h^\e,{n_v}g_h^\e\right>_{\W_x\times\partial\W_v}. 
\end{align}
Meanwhile, invoking the divergence theorem and Assumption \ref{ass-discrete-root-Max}.\ref{ass-momen} yields
\begin{equation}\label{eqn:bpos-1.5}
    \left< E\cdot n_v,(\mhalfh)^2 \right>_{\partial\W_v} = 2E\cdot(\grad_v \mhalfh,\mhalfh)_{\W_v} = 0.
\end{equation}
Therefore applying the polarization identity $ab=\frac{1}{2}(a^2+b^2-(a-b)^2)$ with $a:=g_h^\e$ and $b:=\mhalfh\rho_h$ and \eqref{eqn:bpos-1.5} gives the following bound on $\mD$:
\begin{align}\label{eqn:bpos-2}
\begin{split}
    \mD(g_h^\e,g_h^\e) &= \left<E\cdot{n_v},\mhalfh\rho_h^\e g_h^\e\right>_{\W_x\times\partial\W_v} \\
    &= \frac{1}{2}\left<E\cdot{n_v},(g_h^\e)^2+(\mhalfh\rho_h^\e)^2-(g_h^\e-\mhalfh\rho_h^\e)^2\right>_{\W_x\times\partial\W_v} \\
    &= \frac{1}{2}\left<E\cdot{n_v},(g_h^\e)^2\right>_{\W_x\times\partial\W_v} - \frac{1}{2}\left<E\cdot{n_v},(g_h^\e-\mhalfh\rho_h^\e)^2\right>_{\W_x\times\partial\W_v} \\
    &\geq \frac{1}{2}\left<Eg_h^\e,n_v g_h^\e\right>_{\W_x\times\partial\W_v} - \frac{\|E\|_{L_T^\infty(L^\infty(\W_x))}}{2}\|g_h^\e-\mhalfh\rho_h^\e\|_{L^2(\W_x\times\partial\W_v)}^2.
\end{split}
\end{align}
Applying the discrete trace estimate \eqref{eqn:trace_vx} to \eqref{eqn:bpos-2} gives
\begin{align}\label{eqn:bpos-3}
\begin{split}
    \mD(g_h^\e,g_h^\e) &\geq \frac{1}{2}\left<Eg_h^\e,{n_v}g_h^\e\right>_{\W_x\times\partial\W_v} - \frac{C_T\|E\|_{L_T^\infty(L^\infty(\W_x))}}{2h_v}\|g_h^\e-\mhalfh\rho_h^\e \|_{L^2(\W)}^2, \\ 
\end{split}
\end{align}
where the constant $C_T>0$ is independent of $\e$, $h_x$, and $h_v$.
Adding \eqref{eqn:bpos-1} and \eqref{eqn:bpos-3} yields
\begin{align}\label{eqn:bpos-4}
\begin{split}
    \mB(g_h^\e,g_h^\e) + \mD(g_h^\e,g_h^\e) &\geq \left<\frac{|E\cdot{n_v}|}{2}\ljmp g_h^\e\rjmp,\ljmp g_h^\e\rjmp\right>_{\W_x\times\EIv} \\
 &\quad- \frac{C_T\|E\|_{L_T^\infty(L^\infty(\W_x))}}{2h_v}\|g_h^\e-\mhalfh\rho_h^\e\|_{L^2(\W)}^2
\end{split}
\end{align}
which is \eqref{eqn:Bpos}.  The proof is complete.
\end{proof}


Using Lemma \ref{lem:stability_identity_and_estimates}, we derive a space-time energy estimate for $g_h^\e$ and $\mhalfh\rho_h^\e-g_h^\e$.  Define 
\begin{equation}
\label{eqn:eps_h}
\e_{h_v} := \frac{\omega_{\min}h_v}{4C_1h_v + 2C_2}.
\end{equation}

\begin{lemma}\label{lem:g-l2-stab}
Given $h_v>0$, $h_x>0$, and $\e \leq   \e_{h_v}$,
\begin{align}\label{eqn:g-l2-stab}
\begin{split}
    \|g_h^\e(T)\|_{L^2(\W)}^2 &+ \frac{\omega_{\min}}{2\e^2}\|g_h^\e - \mhalfh \rho_h^\e\|_{L_T^2(L^2(\W))}^2 \\
    & +\int_0^T \left[\e^{\b-1} \left<{|v_h\cdot{n_x}|}\ljmp g_h^\e\rjmp,\ljmp g_h^\e\rjmp\right>_{\EIx\times\W_v} 
     + \tfrac{1}{\e}\left< {|v_h\cdot{n_x}|} g_h^\e, g_h^\e\right>_{\partial \W_x\times\W_v}\right] \dx[t] \\
     &\leq \|g_{0,h}\|_{L^2(\W)}^2 \exp\left( \frac{C_3^2}{\omega_{\min}}T\right).
\end{split}
\end{align}
\end{lemma}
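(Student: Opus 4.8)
The plan is to test the weak formulation \eqref{eqn:alt_forms_eq} with $z_h = g_h^\e(t)$ and then turn the algebraic bounds of \Cref{lem:stability_identity_and_estimates} into a Gr\"onwall inequality. Since $g_h^\e \in H^1([0,T];V_h)$, the substitution $z_h = g_h^\e$ is admissible for a.e.\ $t$, and the transient term collapses to $\e(\partial_t g_h^\e, g_h^\e)_\W = \tfrac{\e}{2}\tfrac{d}{dt}\|g_h^\e\|_{L^2(\W)}^2$. Invoking \Cref{ass:zero_incoming} to annihilate $\mR$, the tested equation becomes
\[
\tfrac{\e}{2}\tfrac{d}{dt}\|g_h^\e\|_{L^2(\W)}^2 + \mA(g_h^\e,g_h^\e) + \mB(g_h^\e,g_h^\e) + \mD(g_h^\e,g_h^\e) - \tfrac{1}{\e}\mQ(g_h^\e,g_h^\e) = \mC(g_h^\e,g_h^\e).
\]

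Next I would insert the four bounds from \Cref{lem:stability_identity_and_estimates}. The identity \eqref{eqn:Apos} leaves on the dissipative side exactly the boundary term $\langle |v_h\cdot n_x| g_h^\e, g_h^\e\rangle_{\partial\W_x\times\W_v}$ and the $\e$-scaled interior-jump term $\e^{\b}\langle|v_h\cdot n_x|\ljmp g_h^\e\rjmp, \ljmp g_h^\e\rjmp\rangle_{\EIx\times\W_v}$ (these are the quantities appearing in the asserted estimate). The bound \eqref{eqn:Bpos} contributes a nonnegative velocity-jump term on $\W_x\times\EIv$, which I simply discard, at the cost of a defect $\tfrac{C_2}{2h_v}\|g_h^\e - \mhalfh\rho_h^\e\|_{L^2(\W)}^2$. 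The collision bound \eqref{eqn:collision_L2} furnishes $\tfrac{\omega_{\min}}{\e}\|g_h^\e - \mhalfh\rho_h^\e\|_{L^2(\W)}^2$ of dissipation, while the field term satisfies, by \eqref{eqn:ps-garding}, $\mC(g_h^\e,g_h^\e) \le \big(C_1 + \tfrac{\omega_{\min}}{2\e}\big)\|g_h^\e - \mhalfh\rho_h^\e\|_{L^2(\W)}^2 + \tfrac{\e C_3^2}{2\omega_{\min}}\|g_h^\e\|_{L^2(\W)}^2$.

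The heart of the argument is to check that the net coefficient of $\|g_h^\e - \mhalfh\rho_h^\e\|_{L^2(\W)}^2$,
\[
\tfrac{\omega_{\min}}{\e} - \tfrac{\omega_{\min}}{2\e} - C_1 - \tfrac{C_2}{2h_v} = \tfrac{\omega_{\min}}{2\e} - C_1 - \tfrac{C_2}{2h_v},
\]
is at least $\tfrac{\omega_{\min}}{4\e}$. This is precisely where the threshold $\e\le\e_{h_v}$ enters: from \eqref{eqn:eps_h} one reads $\tfrac{\omega_{\min}}{2\e_{h_v}} = 2C_1 + \tfrac{C_2}{h_v}$, so $\e\le\e_{h_v}$ yields $\tfrac{\omega_{\min}}{2\e}\ge 2C_1 + \tfrac{C_2}{h_v}$, whence $C_1 + \tfrac{C_2}{2h_v}\le \tfrac{\omega_{\min}}{4\e}$ and the claimed lower bound follows. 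Multiplying the resulting differential inequality by $\tfrac{2}{\e}$ promotes this surviving dissipation to the factor $\tfrac{\omega_{\min}}{2\e^2}$ of \eqref{eqn:g-l2-stab} and scales the two boundary/jump terms into the forms $\e^{\b-1}\langle\cdot\rangle_{\EIx\times\W_v}$ and $\tfrac1\e\langle\cdot\rangle_{\partial\W_x\times\W_v}$ stated there, leaving only $\tfrac{C_3^2}{\omega_{\min}}\|g_h^\e\|_{L^2(\W)}^2$ on the right.

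Finally I would close with Gr\"onwall. Writing the inequality as $\tfrac{d}{dt}\|g_h^\e\|_{L^2(\W)}^2 + G(t) \le \tfrac{C_3^2}{\omega_{\min}}\|g_h^\e\|_{L^2(\W)}^2$, where $G(t)\ge 0$ gathers all dissipative terms, the integrating factor $\exp(-\tfrac{C_3^2}{\omega_{\min}}t)$ gives $\|g_h^\e(T)\|_{L^2(\W)}^2 + \int_0^T G\,\dx[t] \le \|g_{0,h}\|_{L^2(\W)}^2\exp(\tfrac{C_3^2}{\omega_{\min}}T)$, after using $\exp(\tfrac{C_3^2}{\omega_{\min}}(T-t))\ge 1$ on $[0,T]$ and the initial condition $g_h^\e(0)=g_{0,h}$ from \eqref{eqn:alt_forms_ic}. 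I expect the main obstacle to be the constant bookkeeping in the third step; the conceptual subtlety is that both the field term $\mC$ and the velocity-boundary defect from $\mD$ erode the collision dissipation, so only the $h_v$-dependent smallness condition $\e\le\e_{h_v}$ guarantees that a fixed fraction of it survives---this is exactly the origin of the conditional, non-$\e$-uniform character of the bound.
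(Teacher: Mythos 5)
Your proposal is correct and takes essentially the same route as the paper's proof: test the scheme with (a multiple of) $g_h^\e$, insert the four bounds of \Cref{lem:stability_identity_and_estimates}, drop the nonnegative velocity-jump term, use $\e\le\e_{h_v}$ to retain a fixed fraction of the collision dissipation against the erosion from $\mC$ and the $\mD$-defect, and close with Gr\"onwall. The only difference is cosmetic --- the paper tests directly with $z_h=\tfrac{2}{\e}g_h^\e$ while you test with $g_h^\e$ and rescale afterwards --- and your constant bookkeeping checks out, since $\tfrac{2}{\e}\cdot\tfrac{\omega_{\min}}{4\e}=\tfrac{\omega_{\min}}{2\e^2}$ agrees with the paper's surviving coefficient.
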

\begin{proof}
Letting $z_h = \frac{2}{\e}g_h^\e$ in \eqref{eqn:alt_forms_eq} yields the energy equation
\begin{align}\label{eqn:g-l2-stab-0}
\begin{split}
    \frac{\dx[]}{\dx[t]}\|g_h^\e\|_{L^2(\W)}^2 + \frac{2}{\e}\mA(g_h^\e,g_h^\e) + \frac{2}{\e}(\mB(g_h^\e,g_h^\e) + \mD(g_h^\e,g_h^\e)) - \frac{2}{\e^2}\mQ(g_h^\e,g_h^\e) = \frac{2}{\e}\mC(g_h^\e,g_h^\e)
\end{split}
\end{align}
Substituting the estimates in Lemma \ref{lem:stability_identity_and_estimates} yields
\begin{align} \label{eqn:g-l2-stab-1}
\begin{split}
    \frac{\dx[]}{\dx[t]}\|g_h^\e\|_{L^2(\W)}^2 &+ \e^{\b-1} \left<{|v_h\cdot{n_x}|}\ljmp g_h^\e\rjmp,\ljmp g_h^\e\rjmp\right>_{\EIx\times\W_v} 
     + \frac{1}{\e}\left< {|v_h\cdot{n_x}|} g_h^\e, g_h^\e\right>_{\partial \W_x\times\W_v} \\
     &+\frac{1}{\e}\left<|E\cdot{n_v}|\ljmp g_h^\e\rjmp ,\ljmp g_h^\e\rjmp \right>_{\W_x\times\EIv} - 
       \frac{C_2}{\e h_v} \|\mhalfh\rho_h^\e-g_h^\e \|_{L^2(\W)}^2 \\
     &+ \frac{2\omega_{\min}}{\e^2}\|\mhalfh\rho_h^\e-g_h^\e\|_{L^2(\W)}^2\\
    &\leq \frac{1}{\omega_{\min}}C_3^2\|g_h^\e\|_{L^2(\W)}^2 +  \left( \frac{2C_1}{\e}+\frac{\omega_{\min}}{\e^2}\right) \|g_h^\e -  \mhalfh \rho_h^\e \|_{L^2(\W)}^2 
\end{split}
\end{align}
Dropping the positive contribution $\frac{1}{\e}\left< {|E\cdot{n_v}|} \ljmp g_h^\e\rjmp,\ljmp g_h^\e\rjmp\right>_{\W_x\times\EIv}$ and collecting like terms gives 
\begin{align} \label{eqn:g-l2-stab-2}
\begin{split}
    \frac{\dx[]}{\dx[t]}\|g_h^\e\|_{L^2(\W)}^2 + & \e^{\b-1} \left<{|v_h\cdot{n_x}|}\ljmp g_h^\e\rjmp,\ljmp g_h^\e\rjmp\right>_{\EIx\times\W_v} 
     + \frac{1}{\e}\left< {|v_h\cdot{n_x}|} g_h^\e, g_h^\e\right>_{\partial \W_x\times\W_v}\\
     &+\frac{1}{\e^2}\left( \omega_{\min} - 2\e C_1 - \e\frac{C_2}{h_v}\right) \|\mhalfh\rho_h^\e-g_h^\e\|_{L^2(\W)}^2
    \leq \frac{C_3^2}{\omega_{\min}}\|g_h^\e\|_{L^2(\W)}^2.
\end{split}
\end{align}
Since $\e_{h_v} = \frac{\omega_{\min}h_v}{4C_1h_v + 2C_2}$ (see \eqref{eqn:eps_h}), it follows that for any $\e\leq \e_{h_v}$, 
\begin{equation}
    \omega_{\min} - 2\e C_1 - \e\frac{C_2}{h_v} \geq \frac{\omega_{\min}}{2}.
\end{equation}
Therefore
\begin{align} \label{eqn:g-l2-stab-3}
\begin{split}
    \frac{\dx[]}{\dx[t]}\|g_h^\e\|_{L^2(\W)}^2 &+ \e^{\b-1} \left<{|v_h\cdot{n_x}|}\ljmp g_h^\e\rjmp,\ljmp g_h^\e\rjmp\right>_{\EIx\times\W_v} 
     + \frac{1}{\e}\left< {|v_h\cdot{n_x}|} g_h^\e, g_h^\e\right>_{\partial \W_x\times\W_v} \\
     &+ \frac{\omega_{\min}}{2\e^2}\|\mhalfh\rho_h^\e-g_h^\e\|_{L^2(\W)}^2 \leq \frac{C_3^2}{\omega_{\min}}\|g_h^\e\|_{L^2(\W)}^2.
\end{split}
\end{align}
Applying Gr\"onwall's inequality to \eqref{eqn:g-l2-stab-3} yields \eqref{eqn:g-l2-stab}.  The proof is complete.
\end{proof}

With Lemma \ref{lem:g-l2-stab} in hand, we can obtain stability estimates for $\rho_h^\e$ and $J_h^\e$ as well as some projection estimates which will be useful in the next section.  We first list a technical lemma whose proof is provided in the appendix.

\begin{lemma}\label{lem:tech_lb}
Let
\begin{align}
\gamma_I(x) := \left(\frac{|v_h\cdot{n_x}(x)|}{2}\mhalfh,\mhalfh\right)_{\W_v} \quad \text{and} \quad 
\gamma_B(x) := \left(v_h\mhalfh,\mhalfh\right)_{\{v:v_h(v)\cdot{n_x}(x) > 0\}}
\end{align}
for $x\in\EIx$ and $x\in\partial\W$ respectively.  Then there exists $\gamma_*=\gamma_*(h_v)>0$ such that $\gamma_I>\gamma_*$ on $\EIx$ and $\gamma_B\cdot n>\gamma_*$ on $\partial\W_x$ for all $h_x$ and $\e$. 
\end{lemma}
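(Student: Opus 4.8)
The plan is to reduce both lower bounds to the strict positivity of a single continuous function on the compact unit sphere $S^2\subset\R^3$, and then invoke compactness. Observe first that the relevant quantities depend on $x$ only through the unit normal $n=n_x(x)$. For $\gamma_I$ this is immediate. For $\gamma_B\cdot n$ I would use the momentum balance $(v_h\mhalfh,\mhalfh)_{\W_v}=0$---the same identity already used in \eqref{eqn:init_J}, which follows from \Cref{ass-discrete-root-Max}.\ref{ass-mass} and \Cref{ass-discrete-root-Max}.\ref{ass-momen} together with the tensor-product structure of $\mhalfh$---to write, for any unit vector $n$,
\[
  \int_{\W_v} (v_h\cdot n)\,(\mhalfh)^2\,dv = 0 .
\]
Hence the integral of $(v_h\cdot n)(\mhalfh)^2$ over $\{v_h\cdot n>0\}$ equals that of $|v_h\cdot n|(\mhalfh)^2$ over $\{v_h\cdot n<0\}$, so each is exactly half of $\int_{\W_v}|v_h\cdot n|(\mhalfh)^2$. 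Setting
\[
  F(n):=\tfrac12\int_{\W_v} |v_h(v)\cdot n|\,(\mhalfh(v))^2\,dv ,
\]
we then have $\gamma_I(x)=F(n_x(x))$ for $x\in\EIx$ and $\gamma_B(x)\cdot n_x(x)=F(n_x(x))$ for $x\in\partial\W_x$. Thus it suffices to show $\min_{n\in S^2}F(n)>0$.

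Next I would establish that $F$ is continuous and strictly positive on $S^2$. Continuity follows from dominated convergence, since the integrand is continuous in $n$ and bounded by $\tfrac12|v_h|(\mhalfh)^2\in L^1(\W_v)$ (recall $v_h\in L^\infty(\W_v)$ and $\W_v$ is bounded). For positivity, note that $F(n)=0$ forces $v_h(v)\cdot n=0$ for a.e.\ $v$, because $(\mhalfh)^2>0$ on $\overline{\W_v}$. Writing $v_h\cdot n=\sum_{i=1}^3 n_i v_{h,i}(v_i)$, where each component $v_{h,i}=-2\theta\,\partial_{v_i}M_{h,i}^{\frac12}/M_{h,i}^{\frac12}$ depends on $v_i$ alone, the separation of variables forces $n_i v_{h,i}$ to be constant in $v_i$ for each $i$. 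Each $v_{h,i}$ is non-constant: if $v_{h,i}\equiv c$, then \Cref{ass-discrete-root-Max}.\ref{ass-mass} and \Cref{ass-discrete-root-Max}.\ref{ass-momen} give $c=\int_{-L}^{L} v_{h,i}(M_{h,i}^{\frac12})^2=0$, whence $\partial_{v_i}M_{h,i}^{\frac12}\equiv 0$, contradicting \Cref{ass-discrete-root-Max}.\ref{ass-energy}. Therefore $n_i=0$ for all $i$, contradicting $|n|=1$, and so $F(n)>0$ for every $n\in S^2$.

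Finally, since $F$ is continuous and positive on the compact set $S^2$, it attains a positive minimum there; setting $\gamma_*:=\tfrac12\min_{n\in S^2}F(n)>0$ yields $\gamma_I=\gamma_B\cdot n=F\geq 2\gamma_*>\gamma_*$ on the respective skeletons. Crucially, $F$, and hence $\gamma_*$, depends only on $\mhalfh$ and $v_h$---that is, on $h_v$ (and $L$, $\theta$)---and is entirely independent of $h_x$ and $\e$, as required.

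I expect the main obstacle to be the strict positivity in the second step, namely ruling out the degenerate possibility that $v_h$ is confined to a hyperplane through the origin. This is precisely where the tensor-product structure of $\mhalfh$ and the interplay of the mass, momentum, and energy normalizations in \Cref{ass-discrete-root-Max} are essential; the reduction to $F$ and the compactness argument are otherwise routine.
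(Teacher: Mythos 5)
Your proposal is correct, and it takes a genuinely different (and in one respect more complete) route than the paper. The paper proves this lemma via an appendix result in which the two quantities are treated separately as functions of a unit vector $\xi$: for the one-sided integral behind $\gamma_B$ it establishes lower semi-continuity in $\xi$ via Fatou's lemma (needed because the integration domain $\{v_h\cdot\xi>0\}$ moves with $\xi$), for $\gamma_I$ it notes Lipschitz continuity, and it then invokes compactness of the unit sphere; the pointwise positivity of both functions on the sphere is asserted there without an explicit argument. You instead use the discrete momentum identity $(v_h\mhalfh,\mhalfh)_{\W_v}=0$ to show $\gamma_B\cdot n = \tfrac12\int_{\W_v}|v_h\cdot n|(\mhalfh)^2\,\dx[v] = \gamma_I$, collapsing both bounds to a single function $F(n)$ that is manifestly (Lipschitz) continuous, so the Fatou/lower semi-continuity machinery is unnecessary. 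Moreover, your separation-of-variables argument -- that $F(n)=0$ would force each $n_i v_{h,i}$ to be constant, hence zero by the mass and momentum normalizations, hence $\partial_{v_i}M_{h,i}^{1/2}\equiv 0$, contradicting the energy normalization -- supplies exactly the pointwise positivity step that the paper's proof leaves implicit, making your version self-contained. What the paper's route buys in exchange is robustness: the Fatou argument bounds the half-space integral directly and would survive even if the momentum identity (and hence your identification $\gamma_B\cdot n=\gamma_I$) were unavailable, e.g., under a discrete Maxwellian not satisfying \Cref{ass-discrete-root-Max}.\ref{ass-momen}.
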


\begin{lemma}\label{lem:rho-and-J-stab}
Recall the definitions of $\rho_h^\e$ and $J_h^\e$ from \eqref{eqn:rho_h} and \eqref{eqn:J_h}, respectively.  For all $h_x>0$ and every $\e\leq\e_{h_v}$, where $\e_{h_v}$ is defined in \eqref{eqn:eps_h}, the following space-time stability estimates hold:
\begin{align} \label{eqn:rho-and-J-stab-g}
    \|\rho_h^\e\|_{L^\infty_T(L^2{(\W_x))}} \leq \|g_h^\e\|_{L_T^\infty(L^2(\W))} &\lss \|g_{0,h}\|_{L^2(\W)}, \\
    \frac{1}{\e}\|\mhalfh\rho_h^\e-g_h^\e\|_{L_T^2(L^2(\W))}& \label{eqn:rho-and-J-stab-mp-g} \lss \|g_{0,h}\|_{L^2(\W)},\\
    \|J_h^\e\|_{L_T^2(L^2(\W_x))} &\lss  \label{eqn:rho-and-J-stab-J} \|g_{0,h}\|_{L^2(\W)}.
\end{align}
Moreover,
\begin{align}
\frac{\sqrt{\gamma_*}}{\sqrt{\e^{1-\beta}}}\|\ljmp\rho_h^\e\rjmp\|_{L_T^2(L^2(\EIx))} +
 \frac{\sqrt{\gamma_*}}{\sqrt{\e}}\|\rho_h^\e\|_{L_T^2(L^2(\partial\W_x))} &\lss \left(\sqrt{\frac{\e}{h_x}}+1\right)\|g_{0,h}\|_{L^2(\W)} \label{eqn:rho-and-J-stab-jmps}  \\
 \sqrt{\frac{h_x}{\e}}\|\rho_h^\e-\mSh\rho_h^\e\|_{L_T^2(L^2(\W_x))} + \sqrt{\frac{h_x}{\e}}\|\rho_h^\e-\mSh\rho_h^\e\|_{L_T^2(H_h^1(\W_x))} &\lss \left(\sqrt{\frac{\e}{h_x}}+1\right)\|g_{0,h}\|_{L^2(\W)} \label{eqn:rho-and-J-stab-proj}
\end{align}
where $\gamma_*$ is defined in Lemma \ref{lem:tech_lb}.
\end{lemma}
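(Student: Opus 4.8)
The plan is to read off all five estimates from the space--time energy bound \eqref{eqn:g-l2-stab}, in increasing order of difficulty. The first three are essentially immediate. For \eqref{eqn:rho-and-J-stab-g} I would combine the pointwise-in-time inequality $\|\rho_h^\e\|_{L^2(\W_x)}=\|\mhalfh\rho_h^\e\|_{L^2(\W)}\leq\|g_h^\e\|_{L^2(\W)}$ from \eqref{eqn:rho-cont-bound} with the fact that, since \eqref{eqn:g-l2-stab} holds for every final time, its first term bounds $\|g_h^\e\|_{L_T^\infty(L^2(\W))}$ by $\|g_{0,h}\|_{L^2(\W)}$. Estimate \eqref{eqn:rho-and-J-stab-mp-g} is precisely the second term of \eqref{eqn:g-l2-stab}. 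For the current bound \eqref{eqn:rho-and-J-stab-J}, the key observation is that Assumption \ref{ass-discrete-root-Max}.\ref{ass-momen} makes the discrete first moment vanish, $(v_h\mhalfh,\mhalfh)_{\W_v}=0$, so that $J_h^\e=\tfrac1\e(v_h\mhalfh,g_h^\e-\mhalfh\rho_h^\e)_{\W_v}$; a Cauchy--Schwarz estimate in $v$, the bound $\|v_h\mhalfh\|_{L^2(\W_v)}\lss1$ coming from \eqref{eqn:v_h} and Assumption \ref{ass-discrete-root-Max}.\ref{ass-energy}, and then \eqref{eqn:rho-and-J-stab-mp-g} close the argument.

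The substantive work is the jump/trace bound \eqref{eqn:rho-and-J-stab-jmps}. The plan is to split $g_h^\e=\mhalfh\rho_h^\e+(g_h^\e-\mhalfh\rho_h^\e)$ and, since $\mhalfh$ is independent of $x$, write $\ljmp g_h^\e\rjmp=\mhalfh\ljmp\rho_h^\e\rjmp+\ljmp g_h^\e-\mhalfh\rho_h^\e\rjmp$ across $\EIx$, with the boundary trace splitting analogously. A triangle inequality in the weighted space $L^2(\W_v;|v_h\cdot n_x|\,\dx[v])$, together with the identity $\int_{\W_v}|v_h\cdot n_x|(\mhalfh)^2=2\gamma_I(x)$ on $\EIx$ (and its boundary analogue via $\gamma_B$) and the uniform lower bounds $\gamma_I,\gamma_B\cdot n\geq\gamma_*$ from \Cref{lem:tech_lb}, then bounds $\gamma_*|\ljmp\rho_h^\e\rjmp|^2$ on each edge (resp.\ $\gamma_*|\rho_h^\e|^2$ on $\partial\W_x$) by the $|v_h\cdot n_x|$-weighted jump (resp.\ trace) of $g_h^\e$ plus the same weighted norm of the misfit $g_h^\e-\mhalfh\rho_h^\e$. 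Integrating over $\EIx$ (resp.\ $\partial\W_x$) and in time, the $g_h^\e$ contribution is controlled by the interior and boundary terms of \eqref{eqn:g-l2-stab}, which carry the factors $\e^{1-\b}$ and $\e$; the misfit contribution is handled by $|v_h\cdot n_x|\leq\|v_h\|_{L^\infty(\W_v)}$, the trace inequality \eqref{eqn:trace_xv}, and \eqref{eqn:rho-and-J-stab-mp-g}, which contribute a factor $\e^2/h_x$. Dividing by $\e^{1-\b}$ (resp.\ $\e$) and using $\e\leq\e_{h_v}\lss1$ together with \Cref{ass:eps_h_relation} collapses every remainder into the asserted $(\sqrt{\e/h_x}+1)$ bound.

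For the projection estimate \eqref{eqn:rho-and-J-stab-proj} I would reduce to \eqref{eqn:rho-and-J-stab-jmps} through the conforming interpolant of \Cref{ass:interp}. Adding and subtracting $I_h^\b\rho_h^\e$ and using that $\mSh$ is the identity on its range, so that $I_h^\b\rho_h^\e-\mSh\rho_h^\e=-\mSh(\rho_h^\e-I_h^\b\rho_h^\e)$, the $H_h^1$-stability of $\mSh$ in \Cref{ass:proj-stable} gives $\|\rho_h^\e-\mSh\rho_h^\e\|_{\h1hg}\lss\|\rho_h^\e-I_h^\b\rho_h^\e\|_{\h1hg}$. The interpolation estimate \eqref{eqn:FEM_interp} then bounds the right-hand side by $h_x^{-1/2}$ times the boundary trace of $\rho_h^\e$ (plus, when $\b=0$, its interior jumps)---exactly the quantities already controlled by \eqref{eqn:rho-and-J-stab-jmps}. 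Tracking the $\e/h_x$ factors reproduces the claimed bound on the $H_h^1$ piece, and the $L^2$ piece then follows from the discrete Poincar\'e--Friedrichs inequality \eqref{eqn:pf_ineq}.

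I expect the crux to be \eqref{eqn:rho-and-J-stab-jmps}. The flux weight $|v_h\cdot n_x|$ degenerates on $\{v_h\cdot n_x=0\}$ and so cannot on its own control the \emph{unweighted} velocity average defining $\rho_h^\e$; it is the equilibrium-plus-misfit splitting, combined with the uniform positivity $\gamma_*$ of \Cref{lem:tech_lb}, that rescues the estimate. The delicate bookkeeping is then to match the two different $\e$-powers produced by the interior and boundary energy terms against the $\e^2/h_x$ misfit remainder so that the final constant is genuinely $\e$-uniform under \Cref{ass:eps_h_relation}. A secondary, but essential, point is that the passage from interpolant to $L^2$-projection in \eqref{eqn:rho-and-J-stab-proj} is legitimate only because of the nontrivial $H_h^1$-stability proved in \Cref{ass:proj-stable}.
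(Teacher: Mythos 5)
Your proposal is correct and follows essentially the same route as the paper's proof: the first three bounds read off the energy estimate \eqref{eqn:g-l2-stab} (with the current rewritten through the vanishing discrete first moment, Assumption \ref{ass-discrete-root-Max}.\ref{ass-momen}), the jump/trace bound \eqref{eqn:rho-and-J-stab-jmps} uses the same equilibrium-plus-misfit splitting weighted by $|v_h\cdot n_x|$ together with the lower bounds $\gamma_I,\gamma_B\cdot n_x\geq\gamma_*$ of \Cref{lem:tech_lb}, the trace inequality \eqref{eqn:trace_xv}, and \eqref{eqn:rho-and-J-stab-mp-g}, and the projection bound \eqref{eqn:rho-and-J-stab-proj} passes through the conforming interpolant $I_h^\beta$ of \Cref{ass:interp}, the $H_h^1$-stability of $\mSh$ from \Cref{ass:proj-stable}, and the discrete Poincar\'e--Friedrichs inequality \eqref{eqn:pf_ineq}. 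The only cosmetic difference is that you solve for $\mhalfh\ljmp\rho_h^\e\rjmp$ from the splitting where the paper adds and subtracts $g_h^\e$ inside the weighted norm, which is the same estimate written in the opposite direction.
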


\begin{proof}
Estimates \eqref{eqn:rho-and-J-stab-g} and \eqref{eqn:rho-and-J-stab-mp-g} follow from \eqref{eqn:g-l2-stab}.  For \eqref{eqn:rho-and-J-stab-J}, the definition of $J_h^\e$ and Assumption \ref{ass-discrete-root-Max}.\ref{ass-momen} give
\begin{align}\label{eqn:rho-and-J-stab:2}
\begin{split}
    J_h^\e = \frac{1}{\e}(v_h\mhalfh,g_h^\e)_{\W_v} &= \frac{1}{\e}(v_h\mhalfh,g_h^\e-\mhalfh\rho_h^\e)_{\W_v} + \frac{1}{\e}(v_h\mhalfh,\mhalfh)_{\W_v}\rho_h^\e \\
    &= -\frac{2\theta}{\e}(\grad_v\mhalfh,g_h^\e-\mhalfh\rho_h^\e)_{\W_v}.  
\end{split}
\end{align}
Together \eqref{eqn:rho-and-J-stab-mp-g} and \eqref{eqn:rho-and-J-stab:2} imply that
\begin{align}
\begin{split}
    \|J_h^\e\|_{L_T^2(L^2(\W_x))} &\leq \frac{2\theta }{\e}\|\mhalfh\|_{H^1(\W_v)}\|\mhalfh\rho_h^\e-g_h^\e\|_{L_T^2(L^2(\W))} \lss \|g_{0,h}\|_{L^2(\W)}.
\end{split}
\end{align}
We now focus on \eqref{eqn:rho-and-J-stab-jmps}.  We will only prove the bound on the first term of \eqref{eqn:rho-and-J-stab-jmps} as the bound on the second term is similar.  Using the definition of $\gamma_I$; adding and subtracting $g_h^\e$; and using the trace inequality \eqref{eqn:trace_xv}, we obtain
\begin{align}\label{eqn:rho-and-J-stab:3}
\begin{split} 
    \sqrt{\gamma_*}\|\ljmp\rho_h^\e\rjmp\|_{L^2(\EIx)} &\leq \|\sqrt{\gamma_I}\ljmp\rho_h^\e\rjmp\|_{L^2(\EIx)} = \left\|\sqrt{\tfrac{|v_h\cdot{n_x}|}{2}}\ljmp \mhalfh\rho_h^\e\rjmp\right\|_{L^2(\EIx\times\W_v)} \\
    &\leq \left\|\sqrt{\tfrac{|v_h\cdot{n_x}|}{2}}\ljmp \mhalfh\rho_h^\e-g_h^\e\rjmp\right\|_{L^2(\EIx\times\W_v)} + \left\|\sqrt{\tfrac{|v_h\cdot{n_x}|}{2}}\ljmp g_h^\e\rjmp\right\|_{L^2(\EIx\times\W_v)} \\
    &\lss \frac{1}{\sqrt{h_x}}\|\mhalfh\rho_h^\e-g_h^\e\|_{L^2(\W)} + \left\|\sqrt{\tfrac{|v_h\cdot{n_x}|}{2}}\ljmp g_h^\e\rjmp\right\|_{L^2(\EIx\times\W_v)}.
\end{split}
\end{align}
Integrating \eqref{eqn:rho-and-J-stab:3} from $0$ to $T$ and using both \eqref{eqn:g-l2-stab} and \eqref{eqn:rho-and-J-stab-mp-g} yields
\begin{align}\label{eqn:rho-and-J-stab:4}
\begin{split} 
\sqrt{\gamma_*}\|\ljmp\rho_h^\e\rjmp\|_{L_T^2(L^2(\EIx))}
    &\lss \left(\frac{\e}{\sqrt{h_x}}+\sqrt{\e^{1-\beta}}\right)\|g_{0,h}\|_{L^2(\W)}.
\end{split}
\end{align}
If $\beta=1$, the proof is complete by noting that $\e\lss \sqrt{\e}$.  If $\beta=0$, then we can divide \eqref{eqn:rho-and-J-stab:4} by $\sqrt{\e}$ and arrive at \eqref{eqn:rho-and-J-stab-jmps}. 

For \eqref{eqn:rho-and-J-stab-proj}, we first focus on $\beta=0$.  Recall $I_h^\beta$ from \Cref{ass:interp}.  From  \eqref{eqn:FEM_interp_0} and \eqref{eqn:rho-and-J-stab-jmps} we have
\begin{align}\label{eqn:rho-and-J-stab:6}
\begin{split}
 \sqrt{\frac{h_x}{\e}}\|(\rho_h^\e-I_h\rho_h^\e)\|_{L_T^2(H_h^1(\W_x))} &\lss \frac{1}{\sqrt{\e}}\|\ljmp\rho_h^\e\rjmp\|_{L_T^2(L^2(\EIx))} + \frac{1}{\sqrt{\e}}\|\rho_h^\e\|_{L_T^2(L^2(\partial\W_x))} \\ 
 &\lss \left(\sqrt{\frac{\e}{h_x}}+1\right)\|g_{0,h}\|_{L^2(\W)}.
\end{split}
\end{align}
We now extend \eqref{eqn:rho-and-J-stab:6} to $\rho_h^\e-\mSh\rho_h^\e$ using the stability of $\mSh$.  By \Cref{ass:proj-stable} and the triangle inequality we have
\begin{align}\label{eqn:rho-and-J-stab:7}
\begin{split}
 \|\rho_h^\e-\mSh\rho_h^\e\|_{H_h^1(\W_x)} &\leq \|\rho_h^\e-I_h^\beta\rho_h^\e\|_{H_h^1(\W_x)} + \|I_h^\beta\rho_h^\e-\mSh\rho_h^\e\|_{H_h^1(\W_x)} \\
 &\leq \|\rho_h^\e-I_h^\beta\rho_h^\e\|_{H_h^1(\W_x)} + \|\mSh I_h^\beta\rho_h^\e-\mSh\rho_h^\e\|_{H_h^1(\W_x)} \\
 &\leq \|\rho_h^\e-I_h^\beta\rho_h^\e\|_{H_h^1(\W_x)} + \| \mSh(I_h^\beta\rho_h^\e-\rho_h^\e)\|_{H_h^1(\W_x)} \\
 &\lss \|\rho_h^\e-I_h^\beta\rho_h^\e\|_{H_h^1(\W_x)}.
\end{split}
\end{align}
Applying \eqref{eqn:rho-and-J-stab:6} to \eqref{eqn:rho-and-J-stab:7} yields 
\begin{align}\label{eqn:rho-and-J-stab:8}
 \sqrt{\frac{h_x}{\e}}\|(\rho_h^\e-\mSh\rho_h^\e)\|_{L_T^2(H_h^1(\W_x))}
 \lss \left(\sqrt{\frac{\e}{h_x}}+1\right)\|g_{0,h}\|_{L^2(\W)}.
\end{align}
We can extend \eqref{eqn:rho-and-J-stab:8} to the $L_T^2(L^2(\W_x))$ norm by \eqref{eqn:pf_ineq} and arrive at \eqref{eqn:rho-and-J-stab-proj}.  The case $\beta=1$ is similar.  The proof is complete.
\end{proof}

\subsection{Time Derivative Estimates}

In this subsection, we construct temporal estimates for $\partial_t \rho_h^\e$ and $\partial_t J_h^\e$ by determining the evolution equations for $\rho_h^\e$ and $J_h^\e$.  The evolution equations (see \Cref{lem:rho_eps_dde} and \Cref{lem:J_eps_dde}) are formed by choosing a particular type of test function in \cref{prob:discr}.  By adding and subtracting the discrete weighted equilibrium $\mhalfh\rho_h^\e$, we can write the evolution equations \eqref{eqn:rho_eps_dde} and \eqref{eqn:J_eps_dde} into the terms that will build the discretization of \eqref{eqn:dde-cont-system} and the remainder terms $\Theta_i$ where $\Theta_i$ is uniformly bounded in $\e$ when integrated over time.

We begin with the evolution equation for $\rho_h^\e$:
\begin{lemma}\label{lem:rho_eps_dde}
For any $\e>0$, $\rho_h^\e$ and $J_h^\e$ satisfy
\begin{align}\label{eqn:rho_eps_dde}
\begin{split}
    \left(\frac{\partial}{\partial t}\rho_h^\e,q_h\right)_{\W_x} &- (J_h^\e,\grad_x q_h)_{\W_x} + \left<\lavg J_h^\e\ravg,\ljmp q_h\rjmp\right>_{\EIx} +  \e^{\b-1}\left<\gamma_I \ljmp\rho_h^\e\rjmp,\ljmp q_h\rjmp\right>_{\EIx} \\
    &\quad+ \frac{1}{\e}\left<\gamma_B \rho_h^\e,{n_x}q_h\right>_{\partial\W_x} = \e^{\b}\Theta_1(\tilde{g}_h^\e,q_h) + \Theta_2(\tilde{g}_h^\e,q_h),
\end{split}
\end{align}
for all  $q_h\in V_{x,h}$, where $\tilde{g}_h^\e=g_h^\e-\mhalfh\rho_h^\e$  and 
\begin{align}
\Theta_1(\tilde{g}_h^\e,q_h) &=  -\frac{1}{\e}\left<\frac{|v_h\cdot{n_x}|}{2}\ljmp g_h^\e-\mhalfh\rho_h^\e\rjmp,\ljmp\mhalfh q_h\rjmp\right>_{\EIx\times\W_v}, \\
\Theta_2(\tilde{g}_h^\e,q_h) &= -\frac{1}{\e}\left<v_h(g_h^\e-\mhalfh\rho_h^\e),{n_x}\mhalfh q_h\right>_{\partial\W_+}.
\end{align}
Additionally, we have the following bounds:
\begin{align}\label{eqn:theta1_bnd}
|\Theta_1(\tilde{g}_h^\e,q_h)| &\lss \frac{1}{\e \sqrt{h_x}}\|g_h^\e-\mhalfh \rho_h^\e\|_{L^2(\W)}\|\ljmp q_h\rjmp\|_{L^2(\EIx)}, \\
|\Theta_2(\tilde{g}_h^\e,q_h)| &\lss \label{eqn:theta2_bnd} \frac{1}{\e \sqrt{h_x}}\|g_h^\e-\mhalfh\rho_h^\e\|_{L^2(\W)}\| q_h\|_{L^2(\partial\W_x)}.
\end{align}
\end{lemma}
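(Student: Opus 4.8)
The plan is to derive \eqref{eqn:rho_eps_dde} by inserting the structured test function $z_h = \mhalfh q_h$ (with $q_h\in V_{x,h}$, which lies in $V_h=V_{x,h}\otimes V_{v,h}$) into the weak form \eqref{eqn:alt_forms_eq} and then dividing by $\e$. The guiding principle is that this choice collapses most of the bilinear forms by virtue of the discrete Maxwellian properties in \Cref{ass-discrete-root-Max}. The time-derivative term is immediate, giving $\e(\partial_t\rho_h^\e,q_h)_{\W_x}$ after pairing $\mhalfh$ against $g_h^\e$ in $v$.

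First I would dispatch the forms that vanish or cancel. Because $\mhalfh$ is continuous across $v$-edges and $q_h$ is independent of $v$, the jump $\ljmp\mhalfh q_h\rjmp$ vanishes on $\EIv$, killing the skeleton contribution of $\mB$; its volume part, after using $v_h\mhalfh=-2\theta\grad_v\mhalfh$ from \eqref{eqn:v_h} together with the definition \eqref{eqn:J_h} of $J_h^\e$, reduces to $\frac{\e}{2\theta}(E\cdot J_h^\e,q_h)_{\W_x}$, which is exactly $\mC(g_h^\e,\mhalfh q_h)$; hence $\mB-\mC=0$. The form $\mD$ vanishes since $\left<E\cdot n_v,(\mhalfh)^2\right>_{\partial\W_v}=0$, already established in \eqref{eqn:bpos-1.5}, and $\mQ$ vanishes because $(\mhalfh,\mhalfh\rho_h^\e-g_h^\e)_{\W_v}=0$ by \Cref{ass-discrete-root-Max}.\ref{ass-mass} and the definition of $\rho_h^\e$. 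By \Cref{ass:zero_incoming}, $\mR\equiv 0$.

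Next I would expand $\mA(g_h^\e,\mhalfh q_h)$ term-by-term. The volume term, again invoking \eqref{eqn:J_h}, yields $-\e(J_h^\e,\grad_x q_h)_{\W_x}$, and the consistency part of the interior flux yields $\e\left<\lavg J_h^\e\ravg,\ljmp q_h\rjmp\right>_{\EIx}$. For the $\e^\beta$-penalty term and the outflow term, the key manipulation is to add and subtract $\mhalfh\rho_h^\e$ inside the jump/trace of $g_h^\e$: the $\mhalfh\rho_h^\e$ pieces, after integrating out $v$, produce the coefficients $\gamma_I$ and $\gamma_B$ from \Cref{lem:tech_lb}, giving the $\e^{\beta-1}\left<\gamma_I\ljmp\rho_h^\e\rjmp,\ljmp q_h\rjmp\right>_{\EIx}$ and $\frac{1}{\e}\left<\gamma_B\rho_h^\e,n_x q_h\right>_{\partial\W_x}$ terms, while the remaining $g_h^\e-\mhalfh\rho_h^\e$ pieces are precisely $-\e^\beta\Theta_1$ and $-\Theta_2$. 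Dividing the resulting identity by $\e$ and collecting terms produces \eqref{eqn:rho_eps_dde}.

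Finally, the bounds \eqref{eqn:theta1_bnd} and \eqref{eqn:theta2_bnd} follow from a weighted Cauchy--Schwarz inequality: I would split the weight $\sqrt{|v_h\cdot n_x|/2}$ (respectively $\sqrt{v_h\cdot n_x}$ on $\partial\W_+$) between the two factors, then apply the $x$-trace inequality \eqref{eqn:trace_xv} to $\ljmp g_h^\e-\mhalfh\rho_h^\e\rjmp$, which contributes the $h_x^{-1/2}$ factor, while the $q_h$-factor integrates against the bounded $\gamma_I$, $\gamma_B$ to leave $\|\ljmp q_h\rjmp\|_{L^2(\EIx)}$ and $\|q_h\|_{L^2(\partial\W_x)}$. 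The main obstacle is not any single estimate but the bookkeeping in the third step: one must track the $v$-integration and the add-and-subtract decomposition carefully enough to confirm that the $\mB$--$\mC$ cancellation is exact and that the leftover skeleton/boundary contributions assemble precisely into $\gamma_I$, $\gamma_B$, $\Theta_1$, and $\Theta_2$ with the correct powers of $\e$.
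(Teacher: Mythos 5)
Your proposal is correct and follows essentially the same route as the paper's proof: test with $z_h=\mhalfh q_h$, use the discrete Maxwellian properties to get the exact cancellation $\mB(g_h^\e,\mhalfh q_h)=\mC(g_h^\e,\mhalfh q_h)$ together with $\mD=\mQ=\mR=0$, expand $\mA$ by adding and subtracting $\mhalfh\rho_h^\e$ in the penalty and outflow terms to produce the $\gamma_I$, $\gamma_B$ terms and the remainders $\Theta_1$, $\Theta_2$, then conclude with Cauchy--Schwarz and the trace inequality \eqref{eqn:trace_xv}. The only (immaterial) deviations are that you justify $\mD(g_h^\e,\mhalfh q_h)=0$ via \eqref{eqn:bpos-1.5} rather than directly from \Cref{ass-discrete-root-Max}.\ref{ass-symm}, and you split the flux weight symmetrically in the Cauchy--Schwarz step rather than pulling out $\|v_h\mhalfh\|_{L^\infty(\W_v)}$ as the paper does.
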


\begin{proof}
 To show \eqref{eqn:rho_eps_dde},  we let $q_h\in V_{x,h}$ and choose $z_h=\mhalfh q_h\in V_h$ into \eqref{eqn:alt_forms_eq} and evaluate term by term.  First, the time derivative term reduces to
\begin{align}\label{eqn:rho_eps_dde:1}
    \e\left(\frac{\partial}{\partial t}g_h^\e,\mhalfh q_h\right)_\W = \e\left(\frac{\partial}{\partial t}(g_h^\e,\mhalfh)_{\W_v},q_h\right)_{\W_x} = \e\left(\frac{\partial}{\partial t}\rho_h^\e,q_h\right)_{\W_x}.
\end{align}
Next, using the definition of $J_h^\e$ in \eqref{eqn:J_h}, we compute
\begin{align}\label{eqn:rho_eps_dde:2}
\begin{split}
    \mA(g_h^\e,\mhalfh q_h) 
    &= -\left( (v_hg_h^\e,\mhalfh)_{\W_v},\grad_xq_h\right)_{\W_x} + \left<\lavg(v_hg_h^\e,\mhalfh)_{\W_v}\ravg,\ljmp q_h\rjmp\right>_{\EIx} \\
    &\quad+  \e^\b\left<\tfrac{1}{2}|v_h\cdot{n_x}|\ljmp g_h^\e\rjmp,\ljmp \mhalfh q_h\rjmp \right>_{\EIx\times\W_v} + \left<v_hg_h^\e,{n_x}\mhalfh q_h\right>_{\partial\W_+} \\
    &= -\e(J_h^\e,\grad q_h)_{\W_x} + \e\left<\lavg J_h^\e\ravg,\ljmp q_h\rjmp\right>_{\EIx} \\
     &\quad+  \e^\b\left<\tfrac{1}{2}|v_h\cdot{n_x}|\ljmp g_h^\e\rjmp,\ljmp \mhalfh q_h\rjmp \right>_{\EIx\times\W_v} + \left<v_hg_h^\e,{n_x}\mhalfh q_h\right>_{\partial\W_+}
\end{split}
\end{align}   
Adding and subtracting $\mhalfh\rho_h^\e$ from the last two terms of \eqref{eqn:rho_eps_dde:2} and using the definitions of  $\Theta_1$ and $\Theta_2$ gives 
\begin{align}\label{eqn:rho_eps_dde:2-1}
\begin{split}
    \mA(g_h^\e,\mhalfh q_h) 
    &= -\e(J_h^\e,\grad q_h)_{\W_x} + \e\left<\lavg J_h^\e\ravg,\ljmp q_h\rjmp\right>_{\EIx} 
    \\ &\quad
    + \e^\beta\left<\tfrac{1}{2}|v_h\cdot{n_x}|\ljmp \mhalfh\rho_h^\e\rjmp,\ljmp\mhalfh q_h\rjmp \right>_{\EIx\times\W_v} 
    + \left<v_h\mhalfh\rho_h^\e,{n_x}\mhalfh q_h\right>_{\partial\W_+} 
    \\ &\quad
    + \e^\b\left<\tfrac{1}{2}|v_h\cdot{n_x}|\ljmp g_h^\e-\mhalfh\rho_h^\e\rjmp,\ljmp \mhalfh q_h\rjmp \right>_{\EIx\times\W_v} 
    + \left<v_h(g_h^\e-\mhalfh\rho_h^\e),{n_x}\mhalfh q_h\right>_{\partial\W_+} \\
    &= -\e(J_h^\e,\grad q_h)_{\W_x} + \e\left<\lavg J_h^\e\ravg,\ljmp q_h\rjmp\right>_{\EIx} 
    + \e^\beta\left<\gamma_I \ljmp\rho_h^\e\rjmp,\ljmp q_h\rjmp\right>_{\EIx} +  \left<\gamma_B \rho_h^\e,{n_x}q_h\right>_{\partial\W_x} \\
    & \quad - \e^{\b+1}\Theta_1(\tilde{g}_h^\e,q_h) - \e \Theta_2(\tilde{g}_h^\e,q_h),
\end{split}
\end{align}
After division by $\e$, \eqref{eqn:rho_eps_dde:1} and \eqref{eqn:rho_eps_dde:2-1} recover \eqref{eqn:rho_eps_dde}.  Thus it remains to show that 
\begin{equation}
    \label{eqn:sum_BDQ_is_C}
    \mB(g_h^\e,\mhalfh q_h)+\mD(g_h^\e,\mhalfh q_h)+\mQ(g_h^\e,\mhalfh q_h) = \mC(g_h^\e,\mhalfh q_h).
\end{equation}  
For $\mB$, any edge integral in \eqref{eqn:B} vanishes because $\mhalfh q_h$ is continuous in $v$.  Thus by the definition of the discrete velocity $v_h$ in \eqref{eqn:v_h},
\begin{align}\label{eqn:rho_eps_dde:3}
\begin{split}
    \mB(g_h^\e,\mhalfh q_h) = -(Eg_h^\e,q_h\grad_v\mhalfh)_\W = \frac{1}{2\theta}(E\cdot v_hg_h^\e,\mhalfh q_h)_\W =  \mC(g_h^\e,\mhalfh q_h). 
\end{split}
\end{align}
For $\mD$, Assumption \ref{ass-discrete-root-Max}.\ref{ass-symm} implies that $\mD(g_h^\e,\mhalfh q_h) = 0$.  
For $\mQ$, because $\mhalfh q_h$ is isotropic, $\mQ(g_h^\e,\mhalfh q_h)=0$ as well.  Thus \eqref {eqn:sum_BDQ_is_C} holds and consequently so does \eqref{eqn:rho_eps_dde}.

We now prove the bounds on $\Theta_1$ and $\Theta_2$.  For $\Theta_1$, H\"older's inequality and the trace inequality \eqref{eqn:trace_xv} give
\begin{equation}
\begin{split}
\label{eqn:rho_eps_dde:4}
    |\Theta_1(\tilde{g}_h^\e,q_h)| 
    &\lss \frac{1}{\e}\|v_h \mhalfh\|_{L^\infty(\W_v)}  \|\ljmp g_h^\e-\mhalfh\rho_h^\e \rjmp\|_{L^2(\EIx)} \|\ljmp q_h\rjmp\|_{L^2(\EIx)} \\
    & \lss \frac{1}{\e \sqrt{h_x}}\|v_h \mhalfh\|_{L^\infty(\W_v)} \|g_h^\e-\mhalfh\rho_h^\e\|_{L^2(\W)}\|\ljmp q_h\rjmp\|_{L^2(\EIx)}
\end{split} 
\end{equation}
which is \eqref{eqn:theta1_bnd}.  A similar argument for $\Theta_2$ recovers \eqref{eqn:theta2_bnd}. The proof is complete.
\end{proof}

Using \eqref{eqn:rho_eps_dde}, we derive an $\e$-independent bound for $\partial_t\rho_h^\e$ in various norms.
\begin{lemma}\label{lem:dt_rho_bound}
For any $\e\leq\e_{h_v}$, where $\e_{h_v}$ is defined in \eqref{eqn:eps_h}, and all $h_x>0$,
\begin{align}\label{eqn:dt_rho_bound}
    \left\|\partial_t\rho_h^\e\right\|_{L^2(H_{h,\beta}^{-1}(\W_x))} &\lss \|g_{0,h}\|_{L^2(\W_x)}, \\
    \left\|\partial_t\mSh\rho_h^\e\right\|_{L_T^2(L^2(\W_x))} &\lss \frac{1}{h_x}\|g_{0,h}\|_{L^2(\W_x)}, \label{eqn:dt_rho_proj_bound}
\end{align}
where the $H_{h,\beta}^{-1}(\W_x)$ norm is defined in $\eqref{eqn:dual_norms}$.
\end{lemma}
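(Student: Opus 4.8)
The plan is to test the density evolution equation \eqref{eqn:rho_eps_dde} with functions drawn from the conforming test space associated with $\beta$ and to exploit which terms vanish. For the dual bound \eqref{eqn:dt_rho_bound} with $\beta=0$, I would take $q_h\in S_{x,h}^0$. Since such $q_h$ is continuous with vanishing boundary trace, every jump and boundary term in \eqref{eqn:rho_eps_dde} drops, as do both $\Theta_1$ (which carries a factor $\ljmp q_h\rjmp$) and $\Theta_2$ (which lives on $\partial\W_+$), leaving the single identity $(\partial_t\rho_h^\e,q_h)_{\W_x}=(J_h^\e,\grad_x q_h)_{\W_x}$. Cauchy--Schwarz gives $|(\partial_t\rho_h^\e,q_h)_{\W_x}|\le\|J_h^\e\|_{L^2(\W_x)}\|\grad_x q_h\|_{L^2(\W_x)}$, so by the definition of $\|\cdot\|_{H_{h,0}^{-1}(\W_x)}$ in \eqref{eqn:dual_norms} one obtains $\|\partial_t\rho_h^\e\|_{H_{h,0}^{-1}(\W_x)}\le\|J_h^\e\|_{L^2(\W_x)}$ pointwise in time; integrating in time and invoking \eqref{eqn:rho-and-J-stab-J} closes this case.

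For $\beta=1$ I would take $q_h\in V_{x,h}^0$. The vanishing trace again removes the boundary term and $\Theta_2$, but the interior jumps survive, giving $(\partial_t\rho_h^\e,q_h)_{\W_x}=(J_h^\e,\grad_x q_h)_{\W_x}-\langle\lavg J_h^\e\ravg,\ljmp q_h\rjmp\rangle_{\EIx}-\langle\gamma_I\ljmp\rho_h^\e\rjmp,\ljmp q_h\rjmp\rangle_{\EIx}+\e\,\Theta_1$. I would bound each piece against $\|q_h\|_{\h1hg}$: the first by $\|J_h^\e\|_{L^2(\W_x)}$ directly; the second using the trace inequality \eqref{eqn:trace} to get $\|\lavg J_h^\e\ravg\|_{L^2(\EIx)}\lss h_x^{-1/2}\|J_h^\e\|_{L^2(\W_x)}$ together with $\|\ljmp q_h\rjmp\|_{L^2(\EIx)}\le\sqrt{h_x}\|q_h\|_{\h1hg}$, whose product leaves $\|J_h^\e\|_{L^2(\W_x)}\|q_h\|_{\h1hg}$; the third by $\|\gamma_I\|_{L^\infty(\EIx)}\|\ljmp\rho_h^\e\rjmp\|_{L^2(\EIx)}\sqrt{h_x}\|q_h\|_{\h1hg}$; and the $\Theta_1$ term via \eqref{eqn:theta1_bnd}, where the explicit $\e^{-1}h_x^{-1/2}$ is cancelled by the prefactor $\e$ and the $\sqrt{h_x}$ from $\ljmp q_h\rjmp$, leaving $\|g_h^\e-\mhalfh\rho_h^\e\|_{L^2(\W)}\|q_h\|_{\h1hg}$. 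Dividing by $\|q_h\|_{\h1hg}$, taking the $L_T^2$ norm, and controlling the right-hand side through \eqref{eqn:rho-and-J-stab-J}, \eqref{eqn:rho-and-J-stab-jmps}, and \eqref{eqn:rho-and-J-stab-mp-g} (using $\sqrt{h_x}\lss 1$ and $\sqrt{\e/h_x}+1\lss1$ from \Cref{ass:eps_h_relation}) yields \eqref{eqn:dt_rho_bound}.

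For the projection bound \eqref{eqn:dt_rho_proj_bound}, I would use that $\mSh$ commutes with $\partial_t$ and is a self-adjoint idempotent $L^2$-projection, so with $\phi_h:=\mSh\partial_t\rho_h^\e$ (a legitimate test function in $S_{x,h}^0$ or $V_{x,h}^0$) one has $\|\phi_h\|_{L^2(\W_x)}^2=(\partial_t\rho_h^\e,\phi_h)_{\W_x}$. Bounding the right side by the dual pairing gives $\|\phi_h\|_{L^2(\W_x)}^2\le\|\partial_t\rho_h^\e\|_{H_{h,\beta}^{-1}(\W_x)}\,\|\phi_h\|_{\h1hg}$ (with $\|\grad_x\phi_h\|_{L^2(\W_x)}$ in place of $\|\phi_h\|_{\h1hg}$ when $\beta=0$), after which the inverse inequality \eqref{eqn:inverse} together with \eqref{eqn:trace} gives $\|\phi_h\|_{\h1hg}\lss h_x^{-1}\|\phi_h\|_{L^2(\W_x)}$. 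Cancelling one power of $\|\phi_h\|_{L^2(\W_x)}$ leaves $\|\phi_h\|_{L^2(\W_x)}\lss h_x^{-1}\|\partial_t\rho_h^\e\|_{H_{h,\beta}^{-1}(\W_x)}$; integrating in time and applying the already-proved \eqref{eqn:dt_rho_bound} produces \eqref{eqn:dt_rho_proj_bound}.

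I expect the main obstacle to be the $\beta=1$ dual estimate, where no term simplifies away and every surviving contribution must be measured in the mesh-weighted $\h1hg$ norm. The delicate bookkeeping is forcing the powers of $\e$ and $h_x$ to cancel exactly: the $\e^{-1}$ hidden in $J_h^\e$, the $\e^{-1}h_x^{-1/2}$ in $\Theta_1$, and the $\e^{-(1-\beta)/2}$-weighted jump control of \eqref{eqn:rho-and-J-stab-jmps} must all combine, with the help of \Cref{ass:eps_h_relation}, into an $\e$- and $h_x$-uniform constant; the $\gamma_I$ jump term in particular requires the sharp jump stability estimate rather than a cruder trace bound.
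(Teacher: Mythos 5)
Your proposal is correct and follows essentially the same route as the paper: test \eqref{eqn:rho_eps_dde} with conforming test functions (so that $\Theta_2$ and the boundary term drop, and everything drops for $\beta=0$), bound each surviving term against $\|q_h\|_{\h1hg}$ (resp.\ $\|\grad_x q_h\|_{L^2(\W_x)}$), integrate in time using \Cref{lem:rho-and-J-stab}, and then obtain \eqref{eqn:dt_rho_proj_bound} by testing with the projected time derivative and invoking $\|q_h\|_{\h1hg}\lss h_x^{-1}\|q_h\|_{L^2(\W_x)}$. The one minor discrepancy is your closing claim that the $\gamma_I$ term ``requires the sharp jump stability estimate'': the paper instead bounds it by the cruder trace inequality $\|\ljmp\rho_h^\e\rjmp\|_{L^2(\EIx)}\lss h_x^{-1/2}\|\rho_h^\e\|_{L^2(\W_x)}$, which suffices because with $\beta=1$ the penalty carries the factor $\e^{\beta-1}=1$, so your use of \eqref{eqn:rho-and-J-stab-jmps} is valid but not necessary.
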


\begin{proof}
We first focus on \eqref{eqn:dt_rho_bound} and $\beta=1$. Let $q_h\in V_{x,h}^0$ in $\eqref{eqn:rho_eps_dde}$, with $q_h\neq 0$. For such $q_h$, $\Theta_2(\tilde{g}_h^\e,q_h) = 0$; therefore
\begin{align}\label{eqn:dt_rho_bound:1}
\begin{split}
    \left(\partial_t\rho_h^\e,q_h\right) &= (J_h^\e,\grad_x q_h)_{\W_x} - \left<\lavg J_h^\e\ravg,\ljmp q_h\rjmp\right>_{\EIx} - \left<\gamma_I \ljmp\rho_h^\e\rjmp,\ljmp q_h\rjmp\right>_{\EIx} + \Theta_1(g_h^\e,q_h).
\end{split}
\end{align}
We now bound the remaining terms on the right-hand side of \eqref{eqn:dt_rho_bound:1}.  For the first term, the Cauchy-Schwarz inequality implies that,
\begin{align}\label{eqn:dt_rho_bound:2}
    (J_h^\e,\grad_x q_h)_{\W_x} \lss \|\grad_x  q_h\|_{L^2(\W_x)}\|J_h^\e\|_{L^2(\W_x)}.
\end{align}
For the second and third terms, the Cauchy-Schwarz inequality and the trace inequality \eqref{eqn:trace} imply that
\begin{align}\label{eqn:dt_rho_bound:3}
    \left<\lavg J_h^\e\ravg,\ljmp q_h\rjmp\right>_{\EIx} \lss \frac{1}{\sqrt{h_x}}\|\ljmp q_h\rjmp\|_{L^2(\EIx)}\|J_h^\e\|_{L^2(\W)}
\end{align}
and
\begin{align}\label{eqn:dt_rho_bound:4}
    \left<\gamma_I\ljmp \rho_h^\e\rjmp,\ljmp q_h\rjmp\right>_{\EIx} \lss \frac{1}{\sqrt{h_x}}\|\ljmp q_h\rjmp\|_{L^2(\EIx)}\|\rho_h^\e\|_{L^2(\W)}.
\end{align}
Substituting \eqref{eqn:dt_rho_bound:2} through \eqref{eqn:dt_rho_bound:4} and the bound on $\Theta_1$ in \eqref{eqn:rho_eps_dde:4} into \eqref{eqn:dt_rho_bound:1} gives
\begin{align}\label{eqn:dt_rho_bound:6}
\begin{split}
    \left(\partial_t\rho_h^\e,q_h\right)_{\W_x} &\lss  \bigg(\|J_h^\e\|_{L^2(\W_x)} + \|\rho_h^\e\|_{L^2(\W_x)} +\frac{1}{\e}\|g_h^\e-\mhalfh\rho_h^\e\|_{L^2(\W_x)}\bigg)\|q_h\|_{H^1_h(\W_x)}.
\end{split}
\end{align}
After dividing \eqref{eqn:dt_rho_bound:6} by $\|q_h\|_{H^1_h(\W_x)}$ and taking the supremum over all $q_h \in {V^0_{x,h}}$, the result is
\begin{align}\label{eqn:dt_rho_bound:7}
\begin{split}
    \left\|\partial_t\rho_h^\e\right\|_{H_{h,\beta}^{-1}(\W_x)} &\lss  \|J_h^\e\|_{L^2(\W_x)} + \|\rho_h^\e\|_{L^2(\W_x)} + \frac{1}{\e}\|g_h^\e-\mhalfh\rho_h^\e\|_{L^2(\W_x)}.
\end{split}
\end{align}
Integrating \eqref{eqn:dt_rho_bound:7} over $t\in[0,T]$ and applying \eqref{eqn:rho-and-J-stab-g}--\eqref{eqn:rho-and-J-stab-J} yields \eqref{eqn:dt_rho_bound}.  

When $\beta=0$, the bound on \eqref{eqn:dt_rho_bound} is simpler to show.  Indeed, if $q_h \in {S^0_{x,h}}$ is continuous, only the first term on the right-hand side of \eqref{eqn:dt_rho_bound:1} remains.  This term is bounded using \eqref{eqn:dt_rho_bound:2} so that the argument follows when replacing $\|q_h\|_{H^1_h(\W_x)}$ by $\|\grad_x  q_h\|_{L^2(\W_x)}$.

To show \eqref{eqn:dt_rho_proj_bound}, we first use the trace inequality \eqref{eqn:trace} and inverse inequality \eqref{eqn:inverse} to obtain
\begin{equation}\label{eqn:dt_rho_bound:8}
    \|q_h\|_{H^1_h(\W_x)} \lss \frac{1}{h_x}\|q_h\|_{L^2(\W_x)}    
\end{equation}
Choosing $q_h=\partial_t\mSh\rho_h^\e$ in \eqref{eqn:dt_rho_bound:6} and using \eqref{eqn:dt_rho_bound:8} and the identity
\begin{equation}
\left(\partial_t\rho_h^\e,\partial_t\mSh\rho_h^\e\right)_{\W_x} = \left(\partial_t\mSh\rho_h^\e,\partial_t\mSh\rho_h^\e\right)_{\W_x}
\end{equation}
we see that the desired estimate holds.  The proof is complete. 
\end{proof}

We next turn to the evolution equation for $J_h^\e$.
\begin{lemma}\label{lem:J_eps_dde}
For any $\e>0$, $\rho_h^\e$ and $J_h^\e$ satisfy
\begin{align}\label{eqn:J_eps_dde}
\begin{split}
    \e^2\left(\frac{\partial}{\partial t}J_h^\e,\tau_h\right)_{\W_x} &+ (\omega J_h^\e,\tau_h)_{\W_x} + \theta(\grad_x\rho_h^\e,\tau_h)_{\W_x} - \theta\left<\ljmp\rho_h^\e\rjmp,\lavg \tau_h\ravg\right>_{\EIx} \\
    &\quad - (E\rho_h^\e,\tau_h)_{\W_x} = \e\Theta_3(\tilde{g}_h^\e,\tau_h) + \sqrt{\e^{\beta+1}}\Theta_4(\rho_h^\e,\tau_h) + \sqrt{\e}\Theta_5(\rho_h^\e,\tau_h),
\end{split}
\end{align}
for all $\tau_h\in [V_{x,h}]^3$.  Here $\Theta_3$ is a remainder that includes several terms that depend on $\tilde{g}_h^\e=g_h^\e-\mhalfh\rho_h^\e$; it satisfies the bound
\begin{align}
|\Theta_3(\tilde{g}_h^\e,\tau_h)| &\lss \frac{1}{\e h_x}\|g_h^\e-\mhalfh\rho_h^\e\|_{L^2(\W)}\|\tau_h\|_{L^2(\W_x)}. \label{eqn:theta3_bnd} \end{align}
The terms
\begin{align}
    \Theta_4(\rho_h^\e,\tau_h) &=  -\frac{1}{\sqrt{\e^{1-\b}}}\left<\tfrac{|v_h\cdot{n_x}|}{2}\mhalfh\ljmp\rho_h^\e\rjmp,\ljmp \mhalfh v_h\cdot\tau_h\rjmp\right>_{\EIx\times \W_v}, \label{eqn:theta4_def} \\
    \Theta_5(\rho_h^\e,\tau_h) &= \frac{1}{\sqrt{\e}}\left<v_h\cdot n_x \mhalfh\rho_h^\e,\mhalfh v_h\cdot\tau_h\right>_{\partial\W_-}, \label{eqn:theta5_def} 
\end{align}
are also remainder terms satisfying the bounds
\begin{alignat}{2}
|\Theta_4(\rho_h^\e,\tau_h)| 
&\lss \frac{1}{\sqrt{\e^{1-\b}h_x}}\|\ljmp\rho_h^\e\rjmp\|_{L^2(\EIx)}\|\tau_h\|_{L^2(\W_x)} &\lss \frac{1}{\sqrt{\e^{1-\b}}h_x}\|\rho_h^\e\|_{L^2(\W_x)}\|\tau_h\|_{L^2(\W_x)},
\label{eqn:theta4_bnd}
\\
|\Theta_5(\rho_h^\e,\tau_h)| &\lss \frac{1}{\sqrt{\e h_x}}\|\rho_h^\e\|_{L^2(\partial\W_x)}\|\tau_h\|_{L^2(\W_x)} &\lss \frac{1}{\sqrt{\e}h_x}\|\rho_h^\e\|_{L^2(\W_x)}\|\tau_h\|_{L^2(\W_x)}. \label{eqn:theta5_bnd}
\end{alignat}
\end{lemma}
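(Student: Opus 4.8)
The plan is to mirror the proof of \Cref{lem:rho_eps_dde}, but with the ``current-extracting'' test function $z_h = \mhalfh\, v_h\cdot\tau_h \in V_h$ in place of $\mhalfh q_h$; this choice is natural since $J_h^\e = \frac1\e(\mhalfh v_h, g_h^\e)_{\W_v}$. Substituting $z_h$ into \eqref{eqn:alt_forms_eq} and evaluating term by term, the time-derivative term collapses to $\e^2(\partial_t J_h^\e,\tau_h)_{\W_x}$, and the collision term $-\frac1\e\mQ(g_h^\e,z_h)$ collapses to $(\omega J_h^\e,\tau_h)_{\W_x}$ after using \Cref{ass-discrete-root-Max}.\ref{ass-momen} (i.e.\ $(v_h\mhalfh,\mhalfh)_{\W_v}=0$) to annihilate the isotropic contribution of $\mhalfh\rho_h^\e$. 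Throughout I would split $g_h^\e = \mhalfh\rho_h^\e + \tilde g_h^\e$ and route every $\tilde g_h^\e$-dependent piece into $\e\Theta_3$.

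The workhorse of the remaining computation is the discrete second-moment identity
\[
\int_{\W_v}(\mhalfh)^2\, v_{h,\ell}v_{h,k}\,\dx[v]
= 4\theta^2\!\int_{\W_v}\partial_{v_\ell}\mhalfh\,\partial_{v_k}\mhalfh\,\dx[v]
= \theta\,\delta_{\ell k},
\]
which follows from \eqref{eqn:v_h}, the tensor-product structure of $\mhalfh$, and \Cref{ass-discrete-root-Max}.\ref{ass-mass}--\ref{ass-momen}; this is precisely what forces the correct temperature $\theta$ (rather than $\theta_{h_v}$) to appear. Applied to the equilibrium part of $\mA(g_h^\e,z_h)$, the volume term becomes $-\theta(\rho_h^\e,\grad_x\cdot\tau_h)_{\W_x}$, and \Cref{lem:parts} converts it to $\theta(\grad_x\rho_h^\e,\tau_h)_{\W_x} - \theta\langle\lavg\rho_h^\e\ravg,\ljmp\tau_h\rjmp\rangle_{\EIx} - \theta\langle\ljmp\rho_h^\e\rjmp,\lavg\tau_h\ravg\rangle_{\EIx} - \theta\langle\rho_h^\e n_x,\tau_h\rangle_{\partial\W_x}$. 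The equilibrium part of the interior upwind-average term of $\mA$ cancels the $\langle\lavg\rho_h^\e\ravg,\ljmp\tau_h\rjmp\rangle$ contribution (again by the second-moment identity), leaving exactly the two desired interior terms; the equilibrium penalty term of $\mA$ equals $-\sqrt{\e^{\beta+1}}\,\Theta_4$ by inspection (note $\ljmp z_h\rjmp = \ljmp\mhalfh v_h\cdot\tau_h\rjmp$).

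For the velocity-advection contribution I would combine $\mB+\mD-\mC$ on the equilibrium part. Since $\mhalfh\rho_h^\e$ is continuous in $v$, a discrete integration by parts in $v$ (with the interior $\EIv$ jumps of the equilibrium vanishing) turns the $\mB$ volume and interior-edge terms into $(E\cdot\grad_v(\mhalfh\rho_h^\e),z_h)_\W$ minus a $\partial\W_v$ boundary term; using \eqref{eqn:v_h} and the second-moment identity this strong-form term equals $-\tfrac12(E\rho_h^\e,\tau_h)_{\W_x}$, while $-\mC(\mhalfh\rho_h^\e,z_h)$ contributes a further $-\tfrac12(E\rho_h^\e,\tau_h)_{\W_x}$, and the $\partial\W_v$ boundary term is cancelled exactly by $\mD(g_h^\e,z_h)$, for a net $-(E\rho_h^\e,\tau_h)_{\W_x}$. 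The leftover $\partial\W_x$ term $-\theta\langle\rho_h^\e n_x,\tau_h\rangle_{\partial\W_x}$ is rewritten (via the full-velocity second-moment identity) as an integral over $\partial\W_x\times\W_v$ and split along $\{v_h\cdot n_x>0\}$ and $\{v_h\cdot n_x<0\}$: the outflow half cancels the equilibrium $\mA$-boundary term on $\partial\W_+$, and the surviving inflow half, moved to the right-hand side, is precisely $\sqrt{\e}\,\Theta_5$. Collecting everything reproduces \eqref{eqn:J_eps_dde}.

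For the bounds, $\Theta_4$ and $\Theta_5$ are given explicitly, so \eqref{eqn:theta4_bnd}--\eqref{eqn:theta5_bnd} follow from Cauchy--Schwarz, the $L^\infty(\W_v)$ boundedness of $\mhalfh v_h$ (the $v$-integrals contribute only $h_v$-dependent constants), and the trace inequality \eqref{eqn:trace} applied to $\tau_h$ for the first bound and then to $\rho_h^\e$ for the second. The estimate \eqref{eqn:theta3_bnd} is the most tedious: its binding contributions are the interior-edge, penalty, and $\partial\W_+$ pieces carrying $\tilde g_h^\e$, each estimated by applying the trace inequality \eqref{eqn:trace_xv} twice (once to $\tilde g_h^\e$, once to $z_h$) to produce the full factor $h_x^{-1}$, while the volume pieces are harmless. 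I expect the main obstacle to be the $\mB+\mD-\mC$ bookkeeping together with the boundary routing: one must execute the discrete integration by parts in $v$ correctly (exploiting continuity of the equilibrium so the $\EIv$ jumps drop), verify that the two halves $-\tfrac12(E\rho_h^\e,\tau_h)$ sum to the full term, confirm that $\mD$ annihilates the $\partial\W_v$ boundary term, and check that the $\partial\W_x$ term splits cleanly into the outflow cancellation and the inflow remainder $\Theta_5$ --- all while keeping the equilibrium/fluctuation split consistent so every $\tilde g_h^\e$ piece lands in $\Theta_3$ with the stated scaling.
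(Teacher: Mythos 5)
Your proposal is correct and follows essentially the same route as the paper's proof: the same test function $z_h=\mhalfh\, v_h\cdot\tau_h$ (the paper works componentwise with $z_h=v_h^i\mhalfh\tau_h$), the same equilibrium/fluctuation split routing all $\tilde g_h^\e$ terms into $\Theta_3$, the same second-moment identity $(v_h\mhalfh,v_h^i\mhalfh)_{\W_v}=\theta e_i$, the same $\mB+\mD-\mC$ cancellation with each of $\mB$ and $-\mC$ contributing $-\tfrac12(E\rho_h^\e,\tau_h)_{\W_x}$, and the same identification of the penalty and inflow-boundary terms as $\Theta_4$ and $\Theta_5$. The only differences are cosmetic bookkeeping (you integrate by parts first and then cancel the outflow half of the boundary term against the $\partial\W_+$ flux, while the paper extends the $\partial\W_+$ term to all of $\partial\W_x$ before invoking \Cref{lem:parts}) and one minor imprecision: the $\mA$-fluctuation volume term inside $\Theta_3$ is not ``harmless''---it requires the inverse inequality \eqref{eqn:inverse} and also saturates the $1/(\e h_x)$ factor in \eqref{eqn:theta3_bnd}.
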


\begin{proof}
Let $v_h^i=v_h\cdot e_i$, where $e_i$ is the standard unit basis vector in the $i$-th coordinate direction.  From the definition of $v_h$ in \eqref{eqn:v_h}, it follows that $v_h^i\mhalfh\in V_{v,h}$.  To derive \eqref{eqn:J_eps_dde}, we let $z_h=v_h^i \mhalfh \tau_h \in V_h$ in \eqref{eqn:alt_forms_eq}, where $\tau_h\in V_{x,h}$ is arbitrary,
and evaluate the result term by term.  
For brevity, we will not gather all of the terms of $\Theta_3$ together, but rather identify each piece as we go and show that it satisfies the bound in \eqref{eqn:theta3_bnd}.  

For the time derivative, the definition of $J_h^\e$ in \eqref{eqn:J_h} gives
\begin{align}\label{eqn:J_eps_dde:4}
\begin{split}
    \e\left(\frac{\partial}{\partial t}g_h^\e,v_h^i\mhalfh\tau_h\right)_\W = \e\left(\frac{\partial}{\partial t}(g_h^\e,v_h\mhalfh)_{\W_v},\tau_h e_i\right)_{\W_x} = \e^2\left(\frac{\partial}{\partial t}J_h^\e,\tau_h e_i\right)_{\W_x}.
\end{split}
\end{align}

To evaluate $\mA$, we add and subtract $\mhalfh\rho_h^\e$ from the first argument and write
\begin{align}\label{eqn:J_eps_dde:1}
\begin{split}
    \mA(g_h^\e,v_h^i\mhalfh\tau_h) &= \mA(\mhalfh\rho_h^\e,v_h^i\mhalfh\tau_h) - \e\left\{\frac{1}{\e}\mA(g_h^\e-\mhalfh\rho_h^\e,v_h^i\mhalfh\tau_h)\right\}\\
    &=: I_1 - \e\{I_2\}.
\end{split}
\end{align}
The term $I_2$ belongs to $\Theta_3$.  Since $I_2$ contains $g_h^\e-\mhalfh\rho_h^\e$, following a similar treatment to $\Theta_1$ in \Cref{lem:rho_eps_dde}, we can show $I_2$ satisfies \eqref{eqn:theta3_bnd}.  For $I_1$, the definition of $\mathcal{A}$ in \eqref{eqn:A} implies that,
\begin{equation}
    I_1 =I_3 - \sqrt{\e^{\b+1}}\{I_4\} - \sqrt{\e}\{I_5\},
\end{equation}
where 
\begin{subequations}
\label{eqn:J_eps_dde:2}
\begin{align}
    \begin{split}
    I_3 &= -(\rho_h^\e,\grad_x \tau_h)_{\W_x}\cdot (v_h\mhalfh,v_h^i\mhalfh)_{\W_v}
    + \left<\lavg\rho_h^\e\ravg,\ljmp\tau_h\rjmp\right>_{\EIx}\cdot (v_h\mhalfh,v_h^i\mhalfh)_{\W_v} \\
        &\quad+ \left<\rho_h,n_x\tau_h\right>_{\partial\W_x}\cdot (v_h\mhalfh,v_h^i\mhalfh)_{\W_v}
    \end{split}
    \label{eqn:J_eps_dde:2:I_3}\\
    I_4 & = -\frac{1}{\sqrt{\e^{1-\beta}}}\left<\tfrac{|v_h\cdot{n_x}|}{2}\mhalfh\ljmp \rho_h^\e\rjmp,\ljmp \mhalfh v_h \cdot e_i\tau_h\rjmp\right>_{\EIx\times {\W_v}} \\
    I_5 &= \frac{1}{\sqrt{\e}}\left<v_h\cdot n_x \mhalfh\rho_h^\e,\mhalfh v_h\cdot e_i\tau_h\right>_{\partial\W_-}, 
\end{align}
\end{subequations}
The definitions of $\mhalfh$ and $v_h$ in \Cref{defn:root-Maxwellian-and-discrete-velocity}, combined with Assumptions \ref{ass-discrete-root-Max}.\ref{ass-momen} and \ref{ass-discrete-root-Max}.\ref{ass-energy}, imply that
\begin{align}
\label{eqn:I3_max_ints}
    (v_h\mhalfh,v_h^i\mhalfh)_{\W_v} 
        = (-2 \theta \nabla_v \mhalfh,-2 \theta \partial_{v_i} \mhalfh)_{\W_v} 
        = \theta e_i.
\end{align}
Substituting \eqref{eqn:I3_max_ints} into \eqref{eqn:J_eps_dde:2:I_3} and then applying the discrete integration-by-parts formula from \eqref{eqn:int_by_parts} gives
\begin{align}
\begin{split}\label{eqn:J_eps_dde:3}
    I_3 &= -\theta(\rho_h^\e e_i,\grad_x \tau_h)_{\W_x} + \theta\left<\lavg\rho_h^\e e_i\ravg ,\ljmp\tau_h\rjmp\right>_{\EIx}+\theta\left<\rho_h^\e e_i,{n_x}\tau_h\right>_{\partial\W_x} \\
    &= \theta(\grad_x\rho_h^\e,\tau_he_i) - \theta\left<\ljmp\rho_h^\e\rjmp,\lavg\tau_h e_i\ravg\right>_{\EIx}.
\end{split}
\end{align}
Meanwhile $I_4$ is the only component of $\Theta_4$ and can be bounded using the trace inequality \eqref{eqn:trace} to obtain
\begin{align}
    |I_4| \lss \frac{1}{\sqrt{h_x\e^{1-\b}}}\|v_h\|_{L^\infty(\W_v)}^2\|\mhalfh\|_{L^\infty(\W_v)}^2\|\ljmp\rho_h^\e\rjmp\|_{L^2(\EIx)}\|\tau_h\|_{L^2(\W_x)}.
\end{align}
Likewise, $I_5$ is the only component of $\Theta_5$ and can be bounded in a similar fashion.

To evaluate $\mB$, we add and subtract $\mhalfh\rho_h^\e$ from the first argument and write
\begin{align}\label{eqn:J_eps_dde:5}
\begin{split}
    \mB(g_h^\e,v_h^i\mhalfh\tau_h)&=\mB(\mhalfh\rho_h^\e,v_h^i\mhalfh\tau_h)
    -\e\left\{\frac{1}{\e}\mB(g_h^\e-\mhalfh\rho_h^\e,v_h^i\mhalfh\tau_h)\right\} \\
    &= I_5 + \e\{I_6\}.
\end{split}
\end{align}
Here $I_6$ is a remainder term that belongs to $\Theta_3$ and satisfies the bound in \eqref{eqn:theta3_bnd} due to the trace estimate \eqref{eqn:trace_vx} and inverse estimate \eqref{eqn:inverse}.  Meanwhile, the upwind penalty term in $\mB(\mhalfh\rho_h^\e,v_h^i\mhalfh\tau_h)$ vanishes because the first argument is continuous in $v$; this leaves
\begin{equation}
\label{eqn:I5_def}
   I_5
   = -\left (E\rho_h^\e ,\tau_h \right)_{\W_x} \cdot
   \left [ \left (\mhalfh,\grad_v(v_h^i\mhalfh)\right)_{\W_v} -
   \left< \mhalfh,\ljmp v_h^i\mhalfh\rjmp \right>_{\EIv} \right].
\end{equation}
From the integration-by-parts identity \eqref{eqn:int_by_parts} (applied to functions in $V_{v,h}$) and continuity of $\mhalfh$,
\begin{equation}\label{eqn:I5_int_by_parts}
\left (\mhalfh,\grad_v(v_h^i\mhalfh)\right)_{\W_v} -
   \left< \mhalfh,\ljmp v_h^i\mhalfh\rjmp \right>_{\EIv} 
   = 
   -\left(\grad_v\mhalfh, v_h^i\mhalfh \right)_{\W_v} 
   + \left< \mhalfh,n_v v_h^i\mhalfh\right>_{\partial\W_v}
\end{equation}
The definition of $v_h$ in \eqref{eqn:v_h}, along with \eqref{eqn:I3_max_ints}, implies that for the first term above,
\begin{equation}
\label{eqn:I3_max_ints:2}
    -\left(\grad_v\mhalfh, v_h^i\mhalfh \right)_{\W_v} 
    =  \frac{1}{2 \theta} \left(v_h\mhalfh, v_h^i\mhalfh \right)_{\W_v} 
    = \frac12 e_i.
\end{equation}
Substituting \eqref{eqn:I5_int_by_parts} with \eqref{eqn:I3_max_ints:2} into \eqref{eqn:I5_def} and recalling the definition of $\mD$ from \eqref{eqn:D} gives
\begin{equation}
    \label{eqn:J_eps_dde:6}
\begin{split}
I_5 
    &= -\frac12 (E\rho_h^\e,\tau_h e_i)_{\W_x} 
    + \frac12 (E\rho_h^\e,\tau_h e_i)_{\W_x} \left< \mhalfh,n_v v_h^i\mhalfh\right>_{\partial\W_v} \\
    &= -\frac12 (E\rho_h^\e,\tau_h e_i)_{\W_x} - \mD(g_h^\e,v_h^i\mhalfh\tau_h)
\end{split}
\end{equation}
To evaluate $\mQ$, we use Assumption \ref{ass-discrete-root-Max}.\ref{ass-momen}:
\begin{align}\label{eqn:J_eps_dde:7}
\begin{split}
    \frac{1}{\e}\mQ(g_h^\e,v_h^i\mhalfh\tau_h) &= -\frac{1}{\e}(\omega(\mhalfh\rho_h^\e -g_h^\e ),v_h^i\mhalfh\tau_h)_{\W} \\
    &= -\frac{1}{\e}(\omega\rho_h^\e ,\tau_h)_{\W_x}(\mhalfh,v_h^i\mhalfh)_{\W_v} + \left( \frac{\omega}{\e}(g_h^\e ,v_h\mhalfh)_{\W_v},\tau_he_i\right)_{\W_x} \\
    &= (\omega J_h^\e,\tau_he_i)_{\W_x}.
\end{split}
\end{align}
Lastly, to evaluate $\mC$, we add and subtract $\mhalfh\rho_h^\e$ from the first argument and write
\begin{align}\label{eqn:J_eps_dde:8}
\begin{split}
    \mC(g_h^\e,v_h^i\mhalfh\tau_h) &= \frac{1}{2\theta}(E\cdot v_h \mhalfh\rho_h^\e,v_h^i\mhalfh\tau_h)_\W 
    +\e\Bigg\{\frac{1}{2\theta\e}(E\cdot v_h (g_h^\e-\mhalfh\rho_h^\e),v_h^i\mhalfh\tau_h)_\W\Bigg\} \\
    &= I_7+\e\{I_8\}.
\end{split}
\end{align}
Here $I_8$ is a remainder term of $\Theta_3$ which satisfies the bound in \eqref{eqn:theta3_bnd} and,  because of \eqref{eqn:I3_max_ints},
\begin{align}\label{eqn:J_eps_dde:9}
\begin{split}
    I_7 
    &= \frac{1}{2\theta} \left(E\rho_h^\e,\tau_h \right)_{\W_x}\cdot 
    (v_h \mhalfh,v_h^i\mhalfh)_{\W_v}
    = \frac{1}{2}(E\rho_h^\e,\tau_h e_i)_{\W_x}.
\end{split}
\end{align}

We have shown \eqref{eqn:J_eps_dde} for all $\tau_h e_i$ where $\tau_h\in V_{x,h}$, and thus \eqref{eqn:J_eps_dde} holds for all $\tau_h\in [V_{x,h}]^3$.  The proof is complete.
\end{proof}

We now use \eqref{eqn:J_eps_dde} to get a space-time bound on $\partial_t J_h^\e$.  Recall the definition of $\e_{h_v}$ in \eqref{eqn:eps_h}.

\begin{lemma}\label{lem:dt_J_eps}
Assume $\e\leq\e_{h_v} \lss {h_x} \leq 1$.  Then 
\begin{align} \label{eqn:J_eps_inf}
    \|J_h^\e\|_{L_T^\infty(L^2(\W_x))}& \lss\frac{1}{h_x}\|g_{0,h}\|_{L^2(\W)}, \\
    \e^{3/2}\left\|\partial_t J_h^\e\right\|_{L_T^2(L^2(\W_x))}&\lss \frac{1}{\sqrt{h_x}} \|g_{0,h}\|_{L^2(\W)}. \label{eqn:dt_J_eps}
\end{align}
\end{lemma}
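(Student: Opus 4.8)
The plan is to base both estimates on the evolution equation \eqref{eqn:J_eps_dde}, which I read schematically as $\e^2\partial_t J_h^\e + \omega J_h^\e = F$, where the forcing functional $F(\tau_h)$ collects the discrete gradient of $\rho_h^\e$ (the terms $-\theta(\grad_x\rho_h^\e,\tau_h)+\theta\langle\ljmp\rho_h^\e\rjmp,\lavg\tau_h\ravg\rangle_{\EIx}$), the field term $(E\rho_h^\e,\tau_h)$, and the remainders $\Theta_3,\Theta_4,\Theta_5$. Three structural facts drive everything: $J_h^\e(0)=0$ from \eqref{eqn:init_J}; $\omega$ is independent of $t$; and by \Cref{lem:rho-and-J-stab} the quantities $\|g_h^\e-\mhalfh\rho_h^\e\|_{L_T^2(L^2(\W))}$, $\|\ljmp\rho_h^\e\rjmp\|_{L_T^2(L^2(\EIx))}$, and $\|\rho_h^\e\|_{L_T^2(L^2(\partial\W_x))}$ all carry favorable positive powers of $\e$, while $\|\rho_h^\e\|_{L_T^\infty(L^2(\W_x))}\lss\|g_{0,h}\|_{L^2(\W)}$. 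Throughout I use $\e\lss h_x\leq 1$ from the hypothesis.

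For \eqref{eqn:J_eps_inf} I would test \eqref{eqn:J_eps_dde} with $\tau_h=J_h^\e$, use the coercivity $(\omega J_h^\e,J_h^\e)\geq\omega_{\min}\|J_h^\e\|_{L^2(\W_x)}^2$, and arrive at the differential inequality $\tfrac{\e^2}{2}\tfrac{d}{dt}\|J_h^\e\|_{L^2(\W_x)}^2+\tfrac{\omega_{\min}}{2}\|J_h^\e\|_{L^2(\W_x)}^2\lss\|F\|_*^2$, where $\|\cdot\|_*$ is the $L^2(\W_x)$-dual norm over $[V_{x,h}]^3$. The essential point is to integrate this with the factor $e^{\omega_{\min}t/\e^2}$ rather than plainly, since a plain energy estimate loses a factor $\e^{-1}$ and yields only $h_x^{-1}\e^{-1}$. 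I would split $F=F_\infty+F_2$: into $F_\infty$, which after discrete integration by parts on the gradient pairing (so the jump lands on $J_h^\e$ and is controlled by $\|\rho_h^\e\|_{L_T^\infty(L^2)}$ via the inverse inequality) together with $(E\rho_h^\e,\cdot)$ and the non-jump bounds \eqref{eqn:theta4_bnd}--\eqref{eqn:theta5_bnd} is bounded in $L_T^\infty$ by $h_x^{-1}\|g_{0,h}\|$; and $F_2$, the $\Theta_3$ piece, which lies only in $L_T^2$ but has norm $\mathcal{O}(\e)$ because it carries $\|g_h^\e-\mhalfh\rho_h^\e\|$. Against $F_\infty$ the integrating factor contributes $\int_0^t e^{-\omega_{\min}(t-s)/\e^2}\,ds\sim\e^2$, cancelling the $\e^{-2}$; against $F_2$ the surviving $\e^{-2}$ is cancelled by the $\e^2$ from $\|F_2\|_{L_T^2}^2$. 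Both give $\|J_h^\e(t)\|_{L^2(\W_x)}^2\lss h_x^{-2}\|g_{0,h}\|^2$.

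For \eqref{eqn:dt_J_eps} I would test \eqref{eqn:J_eps_dde} with $\tau_h=\partial_t J_h^\e$ and integrate over $[0,T]$. Because $\omega$ is time-independent, $(\omega J_h^\e,\partial_t J_h^\e)=\tfrac12\tfrac{d}{dt}\|\omega^{1/2}J_h^\e\|_{L^2(\W_x)}^2$ integrates to a boundary term handled by \eqref{eqn:J_eps_inf}, leaving $\e^2\|\partial_t J_h^\e\|_{L_T^2(L^2(\W_x))}^2\leq\int_0^T F(\partial_t J_h^\e)$; the goal is then to bound the right side by $(\e h_x)^{-1}\|g_{0,h}\|^2$, since this is equivalent to the claimed $\e^{3/2}$-scaled estimate. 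The remainders $\Theta_3,\Theta_4,\Theta_5$ are controlled directly by Cauchy--Schwarz in time followed by Young's inequality against $\e^2\|\partial_t J_h^\e\|_{L_T^2}^2$; using the jump/boundary versions of \eqref{eqn:theta4_bnd}--\eqref{eqn:theta5_bnd} and the favorable $\e$-powers in the norms listed in the first paragraph, each leftover term is $\lss(\e h_x)^{-1}\|g_{0,h}\|^2$.

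The hard part will be the discrete-gradient and field contributions $-\theta(\grad_x\rho_h^\e,\partial_t J_h^\e)+\theta\langle\ljmp\rho_h^\e\rjmp,\lavg\partial_t J_h^\e\ravg\rangle$ and $(E\rho_h^\e,\partial_t J_h^\e)$, for which a direct pairing produces an uncontrollable $\e^{-2}$. I would integrate these by parts in time; the boundary terms are then controlled by the $L^\infty$ bounds on $\rho_h^\e$ and $J_h^\e$, and the remaining time integrals bring out $\partial_t\rho_h^\e$ paired against $J_h^\e$. The key device is the adjoint relation supplied by \Cref{lem:parts}, which identifies the discrete gradient pairing with (the negative of) the discrete divergence pairing that appears in the $\rho$-evolution equation \eqref{eqn:rho_eps_dde}; this lets me substitute \eqref{eqn:rho_eps_dde} tested with $q_h=\partial_t\rho_h^\e$. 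That substitution produces a genuinely dissipative $-\|\partial_t\rho_h^\e\|_{L_T^2(L^2)}^2$ term, which I move to the left and use to absorb all the leftover $\partial_t\rho_h^\e$ cross terms (including those from the field contribution), while the stabilization pairings $\langle\gamma_I\ljmp\rho_h^\e\rjmp,\ljmp\partial_t\rho_h^\e\rjmp\rangle$ and $\langle\gamma_B\rho_h^\e,n_x\partial_t\rho_h^\e\rangle$, having time-independent coefficients, integrate to boundary terms bounded at $t=0$ by trace inequalities on $\rho_{0,h}$. This simultaneous control of $\partial_t J_h^\e$ and $\partial_t\rho_h^\e$, coupling the two evolution equations, is the crux of the argument.
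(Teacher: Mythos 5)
Your treatment of \eqref{eqn:J_eps_inf} is essentially the paper's proof: test \eqref{eqn:J_eps_dde} with $\tau_h=J_h^\e$, use coercivity of $\omega$, and exploit the $\e^{-2}$ damping through Gr\"onwall/integrating factor, with the forcing split between $L_T^\infty$-controlled terms and the $\mathcal{O}(\e)$-in-$L_T^2$ term $\|g_h^\e-\mhalfh\rho_h^\e\|_{L^2(\W)}$.

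For \eqref{eqn:dt_J_eps}, however, the crux of your argument contains a sign error that breaks the proof. Write $B(q,\tau):=(\grad_x q,\tau)_{\W_x}-\left<\ljmp q\rjmp,\lavg\tau\ravg\right>_{\EIx}$, so that the problematic terms on the right-hand side after testing with $\partial_t J_h^\e$ are $-\e\theta B(\rho_h^\e,\partial_t J_h^\e)$. Integrating by parts in time turns this into $+\e\theta\int_0^T B(\partial_t\rho_h^\e,J_h^\e)\dx[t]$ plus temporal boundary terms. Now \eqref{eqn:rho_eps_dde} with $q_h=\partial_t\rho_h^\e$ reads
\begin{equation*}
\|\partial_t\rho_h^\e\|_{L^2(\W_x)}^2 - B(\partial_t\rho_h^\e,J_h^\e) + \e^{\b-1}\left<\gamma_I\ljmp\rho_h^\e\rjmp,\ljmp\partial_t\rho_h^\e\rjmp\right>_{\EIx} + \tfrac{1}{\e}\left<\gamma_B\rho_h^\e,n_x\partial_t\rho_h^\e\right>_{\partial\W_x} = \e^\b\Theta_1+\Theta_2,
\end{equation*}
i.e.\ $B(\partial_t\rho_h^\e,J_h^\e)=\|\partial_t\rho_h^\e\|_{L^2(\W_x)}^2+(\text{stabilization})-\e^\b\Theta_1-\Theta_2$. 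Substituting therefore produces $+\e\theta\|\partial_t\rho_h^\e\|_{L_T^2(L^2(\W_x))}^2$ on the \emph{right-hand side} of the estimate for $\e^2\|\partial_t J_h^\e\|_{L_T^2(L^2(\W_x))}^2$. This is an equipartition-type term (exactly as for the wave system $\partial_t\rho=-\nabla\cdot J$, $\e^2\partial_t J=-\omega J-\theta\grad\rho$, where testing with $\partial_t J$ and integrating by parts in time converts the cross term into $+\|\partial_t\rho\|^2$), not the dissipative $-\|\partial_t\rho_h^\e\|^2$ you claim. Consequently there is nothing to move to the left, no mechanism to absorb the $\partial_t\rho_h^\e$ cross terms coming from the field contribution and from $\Theta_1,\Theta_2$ evaluated at $q_h=\partial_t\rho_h^\e$, and the new term itself is not controlled by anything in your toolkit: $\|\partial_t\rho_h^\e\|_{L_T^2(L^2(\W_x))}$ is precisely the quantity that is \emph{not} uniformly bounded in $\e$ (\Cref{lem:dt_rho_bound} bounds $\partial_t\rho_h^\e$ only in the dual norms $H_{h,\beta}^{-1}$, and bounds $\partial_t\mSh\rho_h^\e$, not $\partial_t\rho_h^\e$, in $L^2$). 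The argument could in principle be salvaged -- one can show from \eqref{eqn:rho_eps_dde}, inverse/trace inequalities, and \Cref{lem:rho-and-J-stab} that $\|\partial_t\rho_h^\e\|_{L_T^2(L^2(\W_x))}\lss(\e h_x)^{-1/2}\|g_{0,h}\|_{L^2(\W)}$, which makes $\e\theta\|\partial_t\rho_h^\e\|_{L_T^2}^2\lss h_x^{-1}\|g_{0,h}\|^2$ -- but that auxiliary bound is neither stated nor proved in your proposal, and your stated absorption strategy fails as written.

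The paper avoids this coupling issue entirely with a projection device: in the cross terms it adds and subtracts $\mSh\rho_h^\e$, bounds the $\rho_h^\e-\mSh\rho_h^\e$ pieces by Young's inequality against $\nu\e^3\|\partial_t J_h^\e\|^2$ using the smallness \eqref{eqn:rho-and-J-stab-proj} (which carries the factor $\sqrt{\e/h_x}$), and integrates the $\mSh\rho_h^\e$ pieces by parts in time so that the time derivative lands on $\mSh\rho_h^\e$, whose $L^2$ norm \emph{is} uniformly bounded in $\e$ by \eqref{eqn:dt_rho_proj_bound}; the temporal boundary terms are then handled with \eqref{eqn:J_eps_inf} and \Cref{ass:bc}. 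That is the missing idea in your proposal.
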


\begin{proof}
We will first prove \eqref{eqn:J_eps_inf} which is an estimate needed to obtain \eqref{eqn:dt_J_eps}.   Setting $\tau_h=J_h^\e$ in  \eqref{eqn:J_eps_dde} gives
\begin{align}\label{eqn:J_eps_inf:1}
\begin{split}
    \frac{\e^2}{2}\frac{\dx[]}{\dx[t]}\|J_h^\e\|_{L^2(\W_x)}^2 &+ \omega_{\min}\|J_h^\e\|_{L^2(\W_x)}^2 \leq -\theta(\grad_x\rho_h^\e,J_h^\e)_{\W_x} + \theta\left<\ljmp\rho_h^\e\rjmp,\lavg J_h^\e\ravg\right>_{\EIx} \\
    &\quad + (E\rho_h^\e,J_h^\e)_{\W_x} + \e\Theta_3(\tilde{g}_h^\e,J_h^\e) + \e^\b\Theta_4(\rho_h^\e,J_h^\e) + \sqrt{\e}\Theta_5(\rho_h^\e,J_h^\e).
\end{split}
\end{align}
The first three terms on the right-hand side of \eqref{eqn:J_eps_inf:1} can be bounded using Cauchy-Schwarz together with the inverse inequality \eqref{eqn:inverse} for the first, a trace inequality \eqref{eqn:trace} for the second, and the $L^\infty$ bound on $E$ for the third.  After absorbing $\e$-independent constants,
\begin{equation}
    -\theta(\grad_x\rho_h^\e,J_h^\e)_{\W_x} 
    + \theta\left<\ljmp\rho_h^\e\rjmp,\lavg J_h^\e\ravg\right>_{\EIx} \\
    +(E\rho_h^\e,J_h^\e)_{\W_x} 
    \lss \frac{1}{h_x} \|\rho_h^\e\|_{L^2(\W_x)}\|J_h^\e\|_{L^2(\W_x)}
\end{equation}
Meanwhile, the bounds on $\Theta_3$, $\Theta_4$, and $\Theta_5$ from \Cref{lem:rho_eps_dde} and \Cref{lem:J_eps_dde} imply that
\begin{align}
\begin{split}
    \e\Theta_3(\tilde{g}_h^\e,J_h^\e) + \sqrt{\e^{\b+1}}\Theta_4(\rho_h^\e,J_h^\e) + \sqrt{\e}\Theta_5(\rho_h^\e,J_h^\e)
    &\lss \frac{1}{h_x}  \Big(\|\rho_h^\e\|_{L^2(\W_x)} \\
    &\quad+ \|g_h^\e-\mhalfh\rho_h^\e\|_{L^2(\W)}\Big)\|J_h^\e\|_{L^2(\W_x)}.
\end{split}
\end{align}
Substituting these bounds into \eqref{eqn:J_eps_inf:1} and dividing by $\e^2\|J_h\|_{L^2(\W_x)}$ yields
\begin{align}\label{eqn:J_eps_inf:2}
    \frac{\dx[]}{\dx[t]}\|J_h^\e\|_{L^2(\W_x)} + \frac{\omega_{\min}}{\e^2}\|J_h^\e\|_{L^2(\W_x)} \lss \frac{1}{\e^2 h_x}\left(\|\rho_h^\e\|_{L^2(\W_x)} + \|g_h^\e-\mhalfh\rho_h^\e\|_{L^2(\W)}\right).
\end{align}
According to \eqref{eqn:init_J}, $J_h^\e |_{t=0} =0$.  Thus Gr\"onwall's Lemma applied to \eqref{eqn:J_eps_inf:2}, along with the $L^\infty$ in time bound in \eqref{eqn:rho-and-J-stab-g}, recovers \eqref{eqn:J_eps_inf}.

We now prove \eqref{eqn:dt_J_eps}.  As the proof below is quite technical, we first briefly summarize the process.  The idea is to pass the time derivative from $J_h^\e$ to $\rho_h^\e$ for the terms that are not sufficiently small in $\e$ to bound with the usual techniques.  However, since $\partial_t\rho_h^\e$ is not uniformly bounded w.r.t $\e$ in $L_T^2(L^2(\W_x))$, we will first add and subtract its projection $
\mSh\rho_h^\e$, whose time derivative is uniformly bounded in $\e$ by \Cref{lem:dt_rho_bound}, before passing the time derivative over.  Having an explicit bound on the size of $\rho_h^\e-\mSh\rho_h^\e$ by \eqref{eqn:rho-and-J-stab-proj}, we can obtain the $\e^{3/2}$ scale in \eqref{eqn:dt_J_eps}.

Setting $\tau_h=\e\partial_t J_h^\e$ in  \eqref{eqn:J_eps_dde} gives 
\begin{align}\label{eqn:dt_J_eps:1}
\begin{split}
    \e^3\left\|\partial_t J_h^\e\right\|_{L^2(\W_x)}^2 + \frac{\e}{2}\frac{\dx[]}{\dx[t]}\|\sqrt{\omega}J_h^\e\|_{L^2(\W_x)}^2 &= -\e\theta(\grad_x\rho_h^\e,\partial_t J_h^\e)_{\W_x} + \e\theta\left<\ljmp\rho_h^\e\rjmp,\lavg \partial_t J_h^\e\ravg\right>_{\EIx} \\
    &\quad + \e(E\rho_h^\e,\partial_t J_h^\e)_{\W_x} + \e^2\Theta_3(\tilde{g}_h^\e,\partial_t J_h^\e) \\
    &\quad+ \e^{\b+1}\Theta_4(\rho_h^\e,\partial_t J_h^\e)+\e^{3/2}\Theta_5(\rho_h^\e,\partial_t J_h^\e).
\end{split}
\end{align}
We then integrate \eqref{eqn:dt_J_eps:1} over $t \in [0,T]$, use the zero initial condition in \eqref{eqn:init_J}, and drop the positive term $\|\sqrt{\omega}J_h^\e\|_{L^2(\W_x)}^2|_{t=T}$.  This gives 
\begin{multline}
\label{eqn:dt_J_eps:2}
    \e^3\left\|\partial_t J_h^\e\right\|_{L_T^2(L^2(\W_x))}^2  
    \leq \int_0^T \Big[-\e\theta(\grad_x\rho_h^\e,\partial_t J_h^\e)_{\W_x} + \e\theta\left<\ljmp\rho_h^\e\rjmp,\lavg \partial_t J_h^\e\ravg\right>_{\EIx} \\
    + \e(E\rho_h^\e,\partial_t J_h^\e)_{\W_x} + \e^2\Theta_3(\tilde{g}_h^\e,\partial_t J_h^\e)
      + \e^{(\b+3)/2}\Theta_4(\rho_h^\e,\partial_t J_h^\e)+\e^{3/2}\Theta_5(\rho_h^\e,\partial_t J_h^\e) \Big] \dx[t].
\end{multline}
We now add and subtract $\mSh\rho_h^\e$ (recall $\mSh$ is an $L^2$ projection) to several terms of \eqref{eqn:dt_J_eps:2}:
\begin{align}\label{eqn:dt_J_eps:3}
\begin{split}
    \e^3\left\|\partial_t J_h^\e\right\|_{L_T^2(L^2(\W_x))}^2  
    &\leq \int_0^T \Big[ \Big\{-\e\theta(\grad_x(\rho_h^\e-\mSh\rho_h^\e),\partial_t J_h^\e)_{\W_x} \\
        & \qquad \qquad + \e\theta\left<\ljmp\rho_h^\e-\mSh\rho_h^\e\rjmp,\lavg \partial_t J_h^\e\ravg\right>_{\EIx}
        + \e(E(\rho_h^\e-\mSh\rho_h^\e),\partial_t J_h^\e)_{\W_x} \Big\}\\
    &\quad + \Big\{-\e\theta(\grad_x\mSh\rho_h^\e,\partial_t J_h^\e)_{\W_x} + \e\theta\left<\ljmp\mSh\rho_h^\e\rjmp,\lavg \partial_t J_h^\e\ravg\right>_{\EIx}\Big\} \\
    &\quad + \Big\{\e(E\mSh\rho_h^\e,\partial_t J_h^\e)_{\W_x} \Big\} \\
    &\quad + \Big\{\e^2\Theta_3(\tilde{g}_h^\e,\partial_t J_h^\e) 
    + \e^{(\b+3)/2}\Theta_4(\rho_h^\e,\partial_t J_h^\e)
        +\e^{3/2}\Theta_5(\rho_h^\e,\partial_t J_h^\e)\Big\} \Big] \dx[t] \\
    &\leq \int_0^T \Big[ \{I_1\} + \{I_2\} + \{I_3\} + \{I_4\}\Big] \dx[t].
\end{split}
\end{align}
We will bound $I_1$, $I_2$, $I_3$, and $I_4$ independently.  For $I_1$, after applying Cauchy-Schwarz, the trace inequality \eqref{eqn:trace}, the projection bound \eqref{eqn:rho-and-J-stab-proj}, \Cref{ass:eps_h_relation}, and Young's inequality, we have
\begin{align}\label{eqn:dt_J_eps:4}
\begin{split}
    \int_0^T I_1 \dx[t] &\lss \e\left(\|\rho_h^\e-\mSh\rho_h^\e\|_{L_T^2(L^2(\W_x))} + \|\rho_h^\e-\mSh\rho_h^\e\|_{L^2(H_h^1(\W_x))}\right)\|\partial_t J_h^\e\|_{L_T^2(L^2(\W_x))} \\
    &\lss \frac{\e^{3/2}}{\sqrt{h_x}}\left(\sqrt{\frac{\e}{h_x}}+1\right)\|g_{0,h}\|_{L^2(\W)}\|\partial_t J_h^\e\|_{L_T^2(L^2(\W_x))} \\ 
    &\lss \frac{1}{h_x\nu}\|g_{0,h}\|_{L^2(\W)}^2 + \nu\e^3\|\partial_t J_h^\e\|_{L_T^2(L^2(\W_x))}^2.
\end{split}
\end{align}
for all $\nu>0$.

For $I_2$, we integrate by parts in time to obtain
\begin{align}\label{eqn:dt_J_eps:5}
\begin{split}
    \int_0^T I_2 \dx[t] 
    &= \Bigg\{ \e\int_0^T \theta \left(\grad_x\partial_t \mSh\rho_h^\e,J_h^\e \right)_{\W_x} - \theta\left<\ljmp\partial_t \mSh\rho_h^\e\rjmp,\lavg J_h^\e\ravg\right>_{\EIx} \dx[t]\Bigg\} \\
    &\quad+ \Bigg\{-\e\theta \left (\grad_x\mSh\rho_h^\e,J_h^\e \right)_{\W_x}\Bigg|_0^T + \e\theta\left<\ljmp\mSh\rho_h^\e\rjmp,\lavg J_h^\e\ravg\right>_{\EIx}\Bigg|_0^T \Bigg\} \\
    &= \{K_1\} + \{K_2\}.
\end{split}
\end{align}
We first bound $K_1$.  Using Cauchy-Schwarz, the inverse inequality \eqref{eqn:inverse}, and the trace inequality \eqref{eqn:trace}, as well as the bound on $\partial_t \mSh\rho_h^\e$ in \eqref{eqn:dt_rho_proj_bound} and the bound on $J_h^\e$ in \eqref{eqn:rho-and-J-stab-J}, we have
\begin{align}\label{eqn:dt_J_eps:6}
\begin{split}
    K_1 \lss \frac{\e}{h_x}\|\partial_t \mSh\rho_h^\e\|_{L_T^2(L^2(\W_x))}\|J_h^\e\|_{L_T^2(L^2(\W_x))} \lss \frac{\e}{h_x^2}\|g_{0,h}\|_{L^2(\W)}^2.
\end{split}
\end{align}
For $K_2$, terms evaluated at $t=0$ vanish due to \Cref{ass:bc}.  Following a similar treatment as for $K_1$, but instead using the $L_T^\infty$ estimates \eqref{eqn:rho-and-J-stab-J} and \eqref{eqn:rho-and-J-stab-g}, we obtain
\begin{align}\label{eqn:dt_J_eps:7}
\begin{split}
    K_2 \lss \frac{\e}{h_x}\|\mSh\rho_h^\e\|_{L^\infty(L^2(\W_x))}\|J_h^\e\|_{L^\infty(L^2(\W_x))} \lss \frac{\e}{h_x^2}\|g_{0,h}\|_{L^2(\W)}^2.
\end{split}
\end{align}

For $I_3$, the treatment is similar to that of $I_2$. Integrating by parts in time and applying bounds similar to those used for $K_1$ and $K_2$, we find
\begin{align}\label{eqn:dt_J_eps:8}
\begin{split}
    \int_0^T I_3 \dx[t] &=  -\e\int_0^T \left(\partial_t (E\mSh\rho_h^\e ),J_h^\e \right)_{\W_x} \dx[t] + \e \left( E\mSh\rho_h^\e,J_h^\e \right)_{\W_x}\Bigg|_0^T \\
    &= -\e\int_0^T \left[ \left(\mSh\rho_h^\e\partial_t E,J_h^\e\right)_{\W_x} 
        + \left(E\partial_t \mSh\rho_h^\e,J_h^\e \right)_{\W_x} \right] \dx[t] 
        + \e \left(E\mSh\rho_h^\e,J_h^\e \right)_{\W_x}\Bigg|_0^T \\
    &\lss \e \|\rho_h^\e\|_{L_T^2(L^2(\W_x))}\|J_h^\e\|_{L_T^2(L^2(\W_x))} + \e\|\partial_t \mSh\rho_h^\e\|_{L_T^2(L^2(\W_x))}\|J_h^\e\|_{L_T^2(L^2(\W_x))} \\
    & \qquad+ \e\|\rho_h^\e\|_{L^\infty(L^2(\W_x))}\|J_h^\e\|_{L^\infty(L^2(\W_x))} \\
    &\lss \frac{\e}{h_x}\|g_{0,h}\|_{L^2(\W)}^2.
\end{split}
\end{align}
We now focus on each term of $I_4$.  To bound $\Theta_3$, we use \eqref{eqn:theta3_bnd}, \eqref{eqn:rho-and-J-stab-mp-g}, and Young's inequality:
\begin{align}\label{eqn:dt_J_eps:9}
\begin{split}
    \int_0^T \e^2\Theta_3(\tilde{g}_h^\e,\partial_t J_h^\e) \dx[t] &\lss \frac{\sqrt{\e}}{h_x}\left(\frac{1}{\e}\|g_h^\e-\mhalfh\rho_h^\e\|_{L_T^2(L^2(\W))}\right)\left(\e^{3/2}\|\partial_t J_h^\e\|_{L_T^2(L^2(\W_x))}\right) \\
    &\lss \frac{\e}{\nu h_x^2}\|g_{0,h}\|_{L^2(\W)}^2 + \nu\e^3\|\partial_t J_h^\e\|_{L_T^2(L^2(\W_x))}^2
\end{split}
\end{align}
for any $\nu>0$.  We treat $\Theta_4$ in a similar manner.  Using \eqref{eqn:theta4_bnd} and \eqref{eqn:rho-and-J-stab-jmps} with \Cref{ass:eps_h_relation}, we have
\begin{align}\label{eqn:dt_J_eps:10}
\begin{split}
    \int_0^T \e^{(\beta+3)/2}\Theta_4(\rho_h^\e,\partial_t J_h^\e) \dx[t] &\lss \frac{\e^{\beta/2}}{\sqrt{h_x}}\left(\frac{1}{\e^{(1-\beta)/2}}\|\ljmp\rho_h^\e\rjmp\|_{L_T^2(L^2(\EIx))}\right)\left(\e^{3/2}\|\partial_t J_h^\e\|_{L_T^2(L^2(\W_x))}\right) \\
    &\lss \frac{\e^{\beta}}{\nu h_x}\left(\frac{1}{\e^{(1-\beta)}}\|\ljmp\rho_h^\e\rjmp\|_{L_T^2(L^2(\EIx))}^2\right) + \nu\e^3\| \partial_tJ_h^\e\|_{L_T^2(L^2(\W_x))}^2 \\
    &\lss \frac{\e^\beta}{\nu h_x}\|g_{0,h}\|_{L^2(\W)}^2 + \nu\e^3\|\partial_t J_h^\e\|_{L_T^2(L^2(\W_x))}^2
\end{split}
\end{align}
for any $\nu>0$.   We treat $\Theta_5$ similar to the $\beta=0$ case of $\Theta_4$; cf. \eqref{eqn:dt_J_eps:10}:
\begin{align}\label{eqn:dt_J_eps:11}
\begin{split}
    \int_0^T \e^{3/2}\Theta_5(\rho_h^\e,\partial_t J_h^\e) \dx[t] 
    &\lss \frac{1}{\nu h_x}\|g_{0,h}\|_{L^2(\W)}^2 + \nu\e^3\|\partial_t J_h^\e\|_{L_T^2(L^2(\W_x))}^2
\end{split}
\end{align}
for any $\nu>0$.  Combining \eqref{eqn:dt_J_eps:9}-\eqref{eqn:dt_J_eps:11} yields the following bound for $I_4$:
\begin{align}\label{eqn:dt_J_eps:12}
\begin{split}
    \int_0^T I_4 \dx[t] 
    &\lss \frac{1}{\nu}\left(\frac{1}{h_x}+\frac{\e}{h_x^2}\right)\|g_{0,h}\|_{L^2(\W)}^2 + \nu\e^3\|\partial_t J_h^\e\|_{L_T^2(L^2(\W_x))}^2.
\end{split}
\end{align}
for any $\nu>0$.  Combining \eqref{eqn:dt_J_eps:3}-\eqref{eqn:dt_J_eps:7} and \eqref{eqn:dt_J_eps:12} we obtain

\begin{align}\label{eqn:dt_J_eps:13}
\begin{split}
   \e^3\left\|\partial_t  J_h^\e\right\|_{L_T^2(L^2(\W_x))}^2 &\lss 
   \frac{1}{\nu}\left(\frac{1}{h_x}+\frac{\e}{h_x^2}\right)\|g_{0,h}\|_{L^2(\W)}^2 
   + \frac{\e}{h_x^2}\|g_{0,h}\|_{L^2(\W)}^2 + \nu\e^3\|\partial_t J_h^\e\|_{L_T^2(L^2(\W_x))}^2.
\end{split}
\end{align}
Choosing $\nu$, independent of $\e$ and $h_x$, sufficiently small to move $\e^3\|\partial_t J_h^\e\|_{L_T^2(L^2(\W_x))}^2$ from the right-hand side of \eqref{eqn:dt_J_eps:13} and applying \Cref{ass:eps_h_relation} to the first two terms on the right-hand side of \eqref{eqn:dt_J_eps:13} gives us \eqref{eqn:dt_J_eps}.  The proof is complete. 
\end{proof}
\section{The Drift Diffusion Limit}\label{sect:dde-limit}

The bounds in \Cref{sect:a_priori_estimates} allow us to take the limit of $\rho_h^\e$ and $J_h^\e$ as $\e\to 0$.  In this section, we show these limits satisfy \eqref{eqn:dde_system-b1}, a discrete version of the drift-diffusion equations \eqref{eqn:dde-cont-system}.  Recall the definition of $\e_{h_v}$ from \eqref{eqn:eps_h}.

\begin{theorem}\label{thm:dde_limit}
Let $h_x,h_v>0$ be fixed.  Then for all $\e\leq\e_{h_v}$, we have $\rho_h^\e\in L_T^2(L^2(\W_x))$, $\frac{\partial}{\partial t}\rho_h^\e\in L_T^2(H_{h,\beta}^{-1}(\W_x))$ and $J_h^\e\in L_T^2(L^2(\W_x))$ with bound
\begin{align}\label{eqn:dde_limit_bound}
\|\rho_h^\e\|_{L_T^2(L^2(\W_x))} + \|\partial_t\rho_h^\e\|_{L^2(H_{h,\beta}^{-1}(\W_x))} + \|J_h^\e\|_{L_T^2(L^2(\W_x))} \lss \|g_{0,h}\|_{L^2(\W)}. 
\end{align}
Moreover, there exist functions $\rho_h^0 \in H^1([0,T];V_{x,h}^0)$, $J_h^0\in L^2([0,T];[V_{x,h}]^3)$, and subsequences of $\{\rho_h^\e\}_\e$ and $\{J_h^\e\}_\e$, not relabeled, such that $\rho_h^\e\wto\rho_h^0$ in $L_T^2(L^2(\W_x))$  and $J_h^\e\wto J_h^0$ in $L_T^2([L^2(\W_x)]^3)$.  If $\beta=0$, then we have $\rho_h^0\in H^1([0,T];S_{x,h}^0)$ and $\partial_t\rho_h^\e\wto \partial_t\rho_h^0$ in $L_T^2((S_{x,h}^0)^*)$.  If $\beta=1$, then $\partial_t\rho_h^\e\wto \partial_t\rho_h^0$ in $L_T^2((V_{x,h}^0)^*)$.

Additionally, $\rho_h^0$ and $J_h^0$ satisfy the following drift-diffusion system:
\begin{subequations}
\label{eqn:dde_system-b1}
\begin{align}
    \left(\frac{\partial}{\partial t}\rho_h^0,q_h\right)_{\W_x} - (J_h^0,\grad_x q_h)_{\W_x} + \left<\lavg J_h^0\ravg,\ljmp q_h\rjmp\right>_{\EIx} +  \left<\gamma_I \ljmp\rho_h^0\rjmp,\ljmp q_h\rjmp\right>_{\EIx} &= 0, \label{eqn:dde_system-b1-1}\\
    (\omega J_h^0,\tau_h)_{\W_x} + \theta(\grad_x\rho_h^0,\tau_h)_{\W_x} - \theta\left<\ljmp\rho_h^0\rjmp,\lavg \tau_h\ravg\right>_{\EIx}
    - (E\rho_h^0,\tau_h)_{\W_x} &= 0, \label{eqn:dde_system-b1-2}\\
    (\rho_h^0(0),q_h)_{\W_x} = (\rho_{0,h},q_h)_{\W_x}&. \label{eqn:dde_system-b1-3}
\end{align}
\end{subequations}
for all $\tau_h\in[V_{x,h}]^3$ and $q_h\in V_{x,h}^0$ if $\beta=1$ and for all $\tau_h\in[V_{x,h}]^3$ and $q_h\in S_{x,h}^0$ if $\beta=0$.
\end{theorem}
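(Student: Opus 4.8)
The plan is to (i) read off the uniform bound \eqref{eqn:dde_limit_bound} from the estimates already proved, (ii) extract weakly convergent subsequences by compactness, (iii) identify the spaces in which $\rho_h^0$ and $J_h^0$ live, (iv) pass to the limit term by term in the evolution equations \eqref{eqn:rho_eps_dde} and \eqref{eqn:J_eps_dde}, and (v) recover the initial condition. The bound \eqref{eqn:dde_limit_bound} is just the assembly of \eqref{eqn:rho-and-J-stab-g} and \eqref{eqn:rho-and-J-stab-J} (for $\rho_h^\e$ and $J_h^\e$) together with \eqref{eqn:dt_rho_bound} (for $\partial_t\rho_h^\e$). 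Since $h_x$ and $h_v$ are fixed, the spaces $V_{x,h}$, $V_{x,h}^0$, $S_{x,h}^0$ are finite dimensional and the ambient $L_T^2$ Bochner spaces are reflexive, so Banach--Alaoglu produces subsequences (not relabeled) with $\rho_h^\e\wto\rho_h^0$ in $L_T^2(L^2(\W_x))$, $J_h^\e\wto J_h^0$ in $L_T^2([L^2(\W_x)]^3)$, and $\partial_t\rho_h^\e\wto\chi$ in $L_T^2(H_{h,\beta}^{-1}(\W_x))$, the latter being $L_T^2((S_{x,h}^0)^*)$ or $L_T^2((V_{x,h}^0)^*)$ according to $\beta$. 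Testing against $\phi(t)q_h$ with $\phi\in C_c^\infty((0,T))$ and integrating the time derivative by parts identifies $\chi=\partial_t\rho_h^0$.

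Next I would pin down the regularity of $\rho_h^0$. Because the jump, average, gradient, and trace operators are bounded and linear on the finite-dimensional $V_{x,h}$, they are weak-to-weak continuous; in particular $\rho_h^\e|_{\partial\W_x}\wto\rho_h^0|_{\partial\W_x}$ and $\ljmp\rho_h^\e\rjmp\wto\ljmp\rho_h^0\rjmp$. The boundary estimate in \eqref{eqn:rho-and-J-stab-jmps} gives $\|\rho_h^\e\|_{L_T^2(L^2(\partial\W_x))}\lss\sqrt{\e}\,\|g_{0,h}\|_{L^2(\W)}\to0$, so $\rho_h^0|_{\partial\W_x}=0$; for $\beta=0$ the jump estimate likewise yields $\|\ljmp\rho_h^\e\rjmp\|_{L_T^2(L^2(\EIx))}\lss\sqrt{\e}\to0$ and hence $\rho_h^0\in S_{x,h}^0$, whereas for $\beta=1$ the trace alone gives $\rho_h^0\in V_{x,h}^0$. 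The dual bound \eqref{eqn:dt_rho_bound} only controls a seminorm of $\partial_t\rho_h^\e$, so to obtain genuine $H^1$-in-time regularity I would route through the projection: \eqref{eqn:dt_rho_proj_bound} bounds $\partial_t\mSh\rho_h^\e$ in $L_T^2(L^2(\W_x))$ uniformly in $\e$ (for fixed $h_x$), while \eqref{eqn:rho-and-J-stab-proj} shows $\|\rho_h^\e-\mSh\rho_h^\e\|_{L_T^2(L^2(\W_x))}\lss\sqrt{\e/h_x}\to0$; thus $\mSh\rho_h^\e\wto\rho_h^0$ with $\partial_t\mSh\rho_h^\e$ weakly convergent in $L_T^2(L^2(\W_x))$ to $\partial_t\rho_h^0$, giving $\rho_h^0\in H^1([0,T];S_{x,h}^0)$ or $H^1([0,T];V_{x,h}^0)$.

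The core step is passing to the limit in \eqref{eqn:rho_eps_dde} and \eqref{eqn:J_eps_dde}. Fixing a spatial test function and $\phi\in C_c^\infty((0,T))$, multiplying by $\phi$, and integrating over $[0,T]$, every term linear in $(\rho_h^\e,J_h^\e)$ converges by weak-to-weak continuity of the DG operators and the established weak limits, and the time-derivative term converges by the weak convergence of $\partial_t\rho_h^\e$. What remains is that each remainder vanishes. Testing \eqref{eqn:rho_eps_dde} on $q_h\in V_{x,h}^0$ kills $\Theta_2$, and for $\beta=0$ the continuous, boundary-vanishing $q_h\in S_{x,h}^0$ additionally kills the jump term, the boundary term, and $\Theta_1$; when $\beta=1$, the surviving $\e^\b\Theta_1=\e\Theta_1$ tends to zero by \eqref{eqn:theta1_bnd} together with $\|g_h^\e-\mhalfh\rho_h^\e\|_{L_T^2(L^2(\W))}\lss\e\|g_{0,h}\|_{L^2(\W)}$ from \eqref{eqn:rho-and-J-stab-mp-g}. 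In \eqref{eqn:J_eps_dde}, the term $\e^2(\partial_tJ_h^\e,\tau_h)$ tends to zero because $\e^2\|\partial_tJ_h^\e\|_{L_T^2(L^2(\W_x))}\lss\e^{1/2}h_x^{-1/2}\to0$ by \eqref{eqn:dt_J_eps}; $\e\Theta_3$ vanishes via \eqref{eqn:theta3_bnd} and \eqref{eqn:rho-and-J-stab-mp-g}; and $\sqrt{\e^{\beta+1}}\Theta_4$ and $\sqrt{\e}\Theta_5$ vanish for both $\beta$ upon pairing the \emph{jump-} and \emph{boundary-based} bounds \eqref{eqn:theta4_bnd}--\eqref{eqn:theta5_bnd} with the decay of $\|\ljmp\rho_h^\e\rjmp\|$ and $\|\rho_h^\e\|_{L^2(\partial\W_x)}$ in \eqref{eqn:rho-and-J-stab-jmps}. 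Collecting the surviving terms (and using $\ljmp\rho_h^0\rjmp=0$ when $\beta=0$) recovers \eqref{eqn:dde_system-b1-1}--\eqref{eqn:dde_system-b1-2}. For \eqref{eqn:dde_system-b1-3} I would take $\phi\in C^\infty([0,T])$ with $\phi(0)=1$, $\phi(T)=0$, integrate $\int_0^T\frac{\dx[]}{\dx[t]}(\rho_h^\e,q_h)_{\W_x}\phi\,\dx[t]$ by parts, use $\rho_h^\e(0)=\rho_{0,h}$ from \Cref{ass:bc}, let $\e\to0$, and compare with the same identity for $\rho_h^0\in H^1([0,T];\cdot)$.

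I expect the main obstacle to be the $\e$-power bookkeeping in the previous step: each remainder $\Theta_i$ is controlled only by inverse powers of $\e$ and $h_x$, so making it disappear forces one to pair it with the sharpest available a priori estimate --- the $\e$-decay of $\|g_h^\e-\mhalfh\rho_h^\e\|$, of the interior jumps $\ljmp\rho_h^\e\rjmp$, and of the boundary trace of $\rho_h^\e$ --- and to track the $\beta$-dependent scalings $\e^{\b-1}$, $\sqrt{\e^{\beta+1}}$, and $\sqrt{\e}$ exactly, so that the argument works uniformly for $\beta\in\{0,1\}$. A secondary subtlety, flagged above, is that the $H^1$-in-time regularity of $\rho_h^0$ cannot be read off the dual bound on $\partial_t\rho_h^\e$ and must instead be obtained through the $L^2$-projection $\mSh$.
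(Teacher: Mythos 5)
Your proposal is correct and follows essentially the same two-step argument as the paper's proof: the same a priori estimates feed the weak-compactness extraction, the limit spaces are identified through the $\e$-decay of boundary traces and jumps in \eqref{eqn:rho-and-J-stab-jmps}, the remainders $\Theta_1,\Theta_3,\Theta_4,\Theta_5$ and the $\e^2\partial_t J_h^\e$ term are eliminated by pairing exactly the same bounds (\eqref{eqn:theta1_bnd}, \eqref{eqn:theta3_bnd}--\eqref{eqn:theta5_bnd}, \eqref{eqn:dt_J_eps}) with the same decay estimates (\eqref{eqn:rho-and-J-stab-mp-g}, \eqref{eqn:rho-and-J-stab-jmps}), and the initial condition is recovered by the same integration by parts in time against test functions vanishing at $t=T$. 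Your one deviation---deriving the $H^1$-in-time regularity of $\rho_h^0$ through $\partial_t\mSh\rho_h^\e$ via \eqref{eqn:dt_rho_proj_bound} and \eqref{eqn:rho-and-J-stab-proj} rather than, as the paper does, applying the Riesz representation theorem and a density argument to the weak limit of $\partial_t\rho_h^\e$ in the dual norm \eqref{eqn:dt_rho_bound}---is equivalent in substance (and your aside that the dual bound cannot yield this regularity is not quite right), since $(\partial_t\rho_h^\e,q_h)_{\W_x}=(\partial_t\mSh\rho_h^\e,q_h)_{\W_x}$ for all conforming $q_h$, so the two routes carry the same information in this finite-dimensional setting.
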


\begin{remark}
When $\beta=0$ all of the interior edge terms in \eqref{eqn:dde_system-b1} vanish due to the continuity of the $\rho_h^\e$ and $q_h$.
\end{remark}

\begin{proof}
The proof proceeds in two steps.

\noindent\textit{\underline{Step 1}: Existence of Limits.} 
It follows from \eqref{eqn:rho-and-J-stab-g}, \eqref{eqn:rho-and-J-stab-J}, and \eqref{eqn:dt_rho_bound} that $\|\rho_h^\e\|_{L_T^2(L^2(\W_x))}$, $\|J_h^\e\|_{L_T^2(L^2(\W_x))}$, and $\|\partial_t\rho_h^\e\|_{L^2_T(H_{h,\beta}^{-1}(\W_x))}$, respectively, \eqref{eqn:dde_limit_bound} holds and each term is uniformly bounded in $\e$.  Since each of these spaces are Hilbert spaces, we can extract a subsequence $\rho_h^\e$ and $J_h^\e$, not relabeled, and limiting functions $\rho_h^0\in L_T^2(V_{x,h})$ and $J_h^0\in L_T^2([V_{x,h}]^3)$ such that $\rho_h^\e\wto\rho_h^0$ in $L_T^2(L_2(\W_x))$ and $J_h^\e\wto J_h^0$ in $L_T^2([L^2(\W_x)]^3)$.  We now show $\rho_h^0\in L_T^2(V_{x,h}^0)$.  By \eqref{eqn:rho-and-J-stab-jmps}, 
\begin{align}\label{eqn:dde_limit_bound:1}
    \|\rho_h^\e\|_{L_T^2(L^2(\partial\W_x))}^2\lss \e\|g_{0,h}\|_{L^2(\W)}^2.
\end{align}
Since $V_h$ is finite dimensional, then $\rho_h^\e\wto \rho_h^\e$ in $L_T^2(L^2(\partial\W_x))$.  Because the norm is weakly lower semi-continuous, \eqref{eqn:dde_limit_bound:1} implies that
\begin{align*}
    \|\rho_h^0\|_{L_T^2(L^2(\partial\W_x))}^2 =0 .
\end{align*}
Therefore $\rho_h^0\in L_T^2(V_{x,h}^0)$.  If $\beta=0$, then \eqref{eqn:rho-and-J-stab-jmps}  additionally gives us
\begin{align}\label{eqn:dde_limit_bound:2}
    \|\ljmp\rho_h^\e\rjmp\|_{L_T^2(L^2(\EIx))}^2\lss \e\|g_{0,h}\|_{L^2(\W)}^2,
\end{align}
and thus passing the limit as $\e\to 0$ in \eqref{eqn:dde_limit_bound:2} we obtains
\begin{align*}
    \|\ljmp\rho_h^0\rjmp\|_{L_T^2(L^2(\EIx))}^2 = 0.
\end{align*}
Hence $\rho_h^0(t)$ is continuous in $x$ for a.e.\ time $t$ and $\rho_h^0\in L_T^2(S_{x,h}^0)$ if $\beta=0$.  

For this paragraph we will consider the case $\beta=0$ and put the respective $\beta=1$ result in parentheses.  By \eqref{eqn:dt_rho_bound}, $\partial_t\rho_h^0$ is uniformly bounded in $\e$ in $L_T^2((S_{x,h}^0)^*)$ $(\text{resp }L_T^2((V_{x,h}^0)^*))$  where $(S_{x,h}^0)^*$ $(\text{resp } (V_{x,h}^0)^*)$ is the dual space of $S_{x,h}^0$ $(\text{resp } V_{x,h}^0)$.  Hence there is a subsequence of $\partial_t\rho_h^\e$ such that $\partial_t\rho_h^\e\wto \zeta$ for some $\zeta\in L_T^2((S_{x,h}^0)^*)$ $(\text{resp } L_T^2((V_{x,h}^0)^*))$.  Since $S_{x,h}^0$ (resp. $V_{x,h}^0$) is finite-dimensional and thus a Hilbert space with respect to the $L^2$ inner product on $\W_x$, we can apply the Riesz representation theorem to show there is $\zeta_h\in L_T^2(S_{x,h}^0)$ $(\text{resp } L_T^2(V_{x,h}^0))$ such that
 \[
\zeta(t;q_h) =  (\zeta_h(t),q_h(t))_{\W_x}
 \]
 for all $q_h\in S_{x,h}^0$ $(\text{resp } V_{x,h}^0)$ and a.e.\ $t$ where $\zeta(t;\cdot)\in (S_{x,h}^0)^*$ $(\text{resp }(V_{x,h}^0)^*)$.  By a standard density argument (see \cite[Chapter 7, Problem 5]{Eva2010}), we have $\zeta_h=\partial_t\rho_h^0$.  Therefore $\rho_h^0\in H^1([0,T];S_{x,h}^0)\hookrightarrow C^0([0,T];S_{x,h}^0)$ $(\text{resp } H^1([0,T];V_{x,h}^0)\hookrightarrow C^0([0,T];V_{x,h}^0) )$.

\noindent\textit{\underline{Step 2}: The Limiting System}. We first recover \eqref{eqn:dde_system-b1-1}.  We choose $q_h\in L_T^2(S_{x,h}^0)$ if $\beta=0$ and  $q_h\in L_T^2(V_{x,h}^0)$ if $\beta=1$ in \eqref{eqn:rho_eps_dde} and integrate in time to obtain
\begin{align}\label{eqn:dde_sys:1}
\begin{split}
    \int_0^T \left[
    \left(\partial_t \rho_h^\e,q_h\right)_{\W_x} - (J_h^\e,\grad_x q_h)_{\W_x} + \left<\lavg J_h^\e\ravg,\ljmp q_h\rjmp\right>_{\EIx} +  \left<\gamma_I \ljmp\rho_h^\e\rjmp,\ljmp q_h\rjmp\right>_{\EIx} \right]\dx[t]
    = \int_0^T \e^\b\Theta_1(\tilde{g}_h^\e,q_h) \dx[t]. 
\end{split}
\end{align}
Note that $\int_0^T\Theta_2(\tilde{g}_h^\e,q_h)\dx[t] = 0$ since for a.e.\ $t$, $q_h(t)=0$ on $\partial\W_x$.  If $\beta=0$, then $\int_0^T\Theta_1(\tilde{g}_h^\e,q_h)\dx[t]=0$ since $q_h(t)$ is continuous in $x$ for a.e.\ $t$; additionally, the interior penalty term is also zero so we drop its $\e^{\beta-1}$ multiplier found in \eqref{eqn:rho_eps_dde}.  If $\beta=1$, then $\int_0^T\Theta_1(\tilde{g}_h^\e,q_h)\dx[t]$ is uniformly bounded with respect to $\e$ by \eqref{eqn:theta1_bnd} and \eqref{eqn:rho-and-J-stab-mp-g}; thus the right hand side will vanish as $\e\to 0$.  Therefore we can pass the weak limit as $\e\to 0$ in \eqref{eqn:dde_sys:1} and obtain 
\[
\int_0^T \left(\frac{\partial}{\partial t}\rho_h^0,q_h\right)_{\W_x} - (J_h^0,\grad_x q_h)_{\W_x} + \left<\lavg J_h^0\ravg,\ljmp q_h\rjmp\right>_{\EIx} +  \left<\gamma_I \ljmp\rho_h^0\rjmp,\ljmp q_h\rjmp\right>_{\EIx} \dx[t]= 0, 
\]
which implies 
\[
\left(\frac{\partial}{\partial t}\rho_h^0,q_h\right)_{\W_x} - (J_h^0,\grad_x q_h)_{\W_x} + \left<\lavg J_h^0\ravg,\ljmp q_h\rjmp\right>_{\EIx} +  \left<\gamma_I \ljmp\rho_h^0\rjmp,\ljmp q_h\rjmp\right>_{\EIx} = 0.
\]
for all $q_h\in V_{x,h}^0$ if $\beta=1$ and all $q_h\in S_{x,h}^0$ if $\beta=0$ and a.e. $0<t\leq T$.  Thus we arrive at \eqref{eqn:dde_system-b1-1}. 

For \eqref{eqn:dde_system-b1-2}, choose $\tau_h\in L_T^2([V_{x,h}]^3)$ in \eqref{eqn:J_eps_dde} and integrate in time to obtain
\begin{align}\label{eqn:dde_sys:2}
\begin{split}
      \int_0^T (\omega J_h^\e,\tau_h)_{\W_x} &+ \theta(\grad_x\rho_h^\e,\tau_h)_{\W_x} - \theta\left<\ljmp\rho_h^\e\rjmp,\lavg \tau_h\ravg\right>_{\EIx}  - (E\rho_h^\e,\tau_h)_{\W_x} \dx[t]\\
    & = \int_0^T\e\Theta_3(\tilde{g}_h^\e,\tau_h) + \sqrt{\e^{\beta+1}}\Theta_4(\rho_h^\e,\tau_h) \\
    &\qquad+ \sqrt{\e}\Theta_5(\rho_h^\e,\tau_h) - \e^2\left(\partial_t  J_h^\e,\tau_h\right)_{\W_x} \dx[t].
\end{split}
\end{align}
Since $\e^{3/2}\|\partial_t J_h^\e\|_{L_T^2(L^2(\W_x))}$ is bounded in $\e$ by \eqref{eqn:dt_J_eps}, the time derivative term in \eqref{eqn:dde_sys:2} vanishes as $\e\to 0$.  Additionally since $\int_0^T\Theta_3(\tilde{g}_h^\e,\tau_h)\dx[t]$, $\int_0^T\Theta_4(\rho_h^\e,\tau_h)\dx[t]$, and $\int_0^T\Theta_5(\rho_h^\e,\tau_h)\dx[t]$ are bounded in $\e$ by \eqref{eqn:theta3_bnd}, \eqref{eqn:theta4_bnd}, and \eqref{eqn:theta5_bnd} respectively, then the entire right hand side of \eqref{eqn:dde_sys:2} vanishes as $\e$ vanishes.  Therefore the limiting equation of \eqref{eqn:dde_sys:2} as $\e\to 0$ is
\[
\int_0^T (\omega J_h^0,\tau_h)_{\W_x} + \theta(\grad_x\rho_h^0,\tau_h)_{\W_x} - \theta\left<\ljmp\rho_h^0\rjmp,\lavg \tau_h\ravg\right>_{\EIx}
    - (E\rho_h^0,\tau_h)_{\W_x} \dx[t]= 0 ,
\]
which implies
\[
 (\omega J_h^0,\tau_h)_{\W_x} + \theta(\grad_x\rho_h^0,\tau_h)_{\W_x} - \theta\left<\ljmp\rho_h^0\rjmp,\lavg \tau_h\ravg\right>_{\EIx}
    - (E\rho_h^0,\tau_h)_{\W_x} = 0
\]
for all $\tau_h\in [V_{x,h}]^3$ and a.e. $0<t\leq T$. Therefore we recover \eqref{eqn:dde_system-b1-2}. 

We now derive the projected initial condition \eqref{eqn:dde_system-b1-3} which is similarly shown in \cite[Page 379]{Eva2010} but is listed here for completeness.  Let $\beta=1$ and choose $q_h\in H^1([0,T],V_{x,h}^0)$ with $q_h(T)=0$ in \eqref{eqn:dde_sys:1}.  Integration by parts on the first term of \eqref{eqn:dde_sys:1} recovers
\begin{align}\label{eqn:dde_sys:6}
\begin{split}
    -\int_0^T\left(\rho_h^\e,\frac{\partial}{\partial t}q_h\right)_{\W_x} &- (J_h^\e,\grad_x q_h)_{\W_x} + \left<\lavg J_h^\e\ravg,\ljmp q_h\rjmp\right>_{\EIx} \\
    &\quad+  \left<\gamma_I \ljmp\rho_h^\e\rjmp,\ljmp q_h\rjmp\right>_{\EIx} \dx[t]= \int_0^T \e\Theta_1(g_h^\e,q_h) \dx[t] + (\rho_{0,h},q_h(0))_{\W_x}. 
\end{split}
\end{align}
Sending $\e\to 0$ in \eqref{eqn:dde_sys:6} yields
\begin{align}\label{eqn:dde_sys:7}
\begin{split}
    -\int_0^T\left(\rho_h^0,\frac{\partial}{\partial t}q_h\right)_{\W_x} &- (J_h^0,\grad_x q_h)_{\W_x} + \left<\lavg J_h^0\ravg,\ljmp q_h\rjmp\right>_{\EIx} + \left<\gamma_I \ljmp\rho_h^0\rjmp,\ljmp q_h\rjmp\right>_{\EIx} \dx[t] =  (\rho_{0,h},q_h(0))_{\W_x}. 
\end{split}
\end{align}
Choosing the same test function in \eqref{eqn:dde_system-b1-1} and integrating by parts in time also yields
\begin{align}\label{eqn:dde_sys:8}
\begin{split}
    -\int_0^T\left(\rho_h^0,\frac{\partial}{\partial t}q_h\right)_{\W_x} &- (J_h^0,\grad_x q_h)_{\W_x} + \left<\lavg J_h^0\ravg,\ljmp q_h\rjmp\right>_{\EIx} + \left<\gamma_I \ljmp\rho_h^0\rjmp,\ljmp q_h\rjmp\right>_{\EIx} \dx[t] =  (\rho_h^0(0),q_h(0))_{\W_x}. 
\end{split}
\end{align}
Subtracting \eqref{eqn:dde_sys:7} from \eqref{eqn:dde_sys:8} implies \eqref{eqn:dde_system-b1-3}.  The case for $\beta=0$ is similar.  The proof is complete. 
\end{proof}

Now we show that the whole sequence $\{\rho_h^\e\}_\e$ and $\{J_h^\e\}_\e$ must converge to $\rho_h^0$ and $J_h^0$ respectively.  We do this by showing uniqueness of solutions to the drift-diffusion system \eqref{eqn:dde_system-b1} with the following lemma.

\begin{lemma}\label{lem:r_J_0_bound}
Suppose $\rho_h$ and $J_h$ satisfy \eqref{eqn:dde_system-b1}, then we have the following bound for any $h_x>0$:
\begin{align}\label{eqn:r_J_0_bound}
    \|\rho_h\|_{L_T^\infty(L^2(\W_x))}^2 + \frac{\omega_{\min}}{\theta}\|J_h\|_{L_T^2(L^2(\W_x))}^2 &\leq \exp\left(\frac{\|E\|_{ L^\infty([0,T]\times\W_x)}}{\theta\omega_{\min}}T\right)\|\rho_{0,h}\|_{L^2(\W_x)}^2 
\end{align}

Moreover, the solution pair $\{\rho_h,J_h\}$ to \eqref{eqn:dde_system-b1} is unique.  
\end{lemma}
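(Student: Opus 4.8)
The plan is to run the standard energy method on the mixed system \eqref{eqn:dde_system-b1}, exploiting the discrete adjointness between the primal equation \eqref{eqn:dde_system-b1-1} and the mixed equation \eqref{eqn:dde_system-b1-2}.  First I would test \eqref{eqn:dde_system-b1-1} with $q_h=\rho_h$, which is admissible since $\rho_h(t)\in V_{x,h}^0$ (if $\beta=1$) or $\rho_h(t)\in S_{x,h}^0$ (if $\beta=0$) for a.e.\ $t$ by \Cref{thm:dde_limit}, and test \eqref{eqn:dde_system-b1-2} with $\tau_h=J_h$.  Multiplying the first resulting identity by $\theta$ and adding it to the second, the coupling contributions cancel exactly: $-\theta(J_h,\grad_x\rho_h)_{\W_x}$ cancels $\theta(\grad_x\rho_h,J_h)_{\W_x}$, while $\theta\left<\lavg J_h\ravg,\ljmp\rho_h\rjmp\right>_{\EIx}$ cancels $-\theta\left<\ljmp\rho_h\rjmp,\lavg J_h\ravg\right>_{\EIx}$ by symmetry of these edge pairings.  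Dividing by $\theta$ leaves the energy identity
\begin{equation}
\tfrac12\frac{\dx[]}{\dx[t]}\|\rho_h\|_{L^2(\W_x)}^2 + \left<\gamma_I\ljmp\rho_h\rjmp,\ljmp\rho_h\rjmp\right>_{\EIx} + \tfrac1\theta(\omega J_h,J_h)_{\W_x} = \tfrac1\theta(E\rho_h,J_h)_{\W_x}.
\end{equation}

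Next I would discard the nonnegative edge term (recall $\gamma_I>\gamma_*>0$ from \Cref{lem:tech_lb}), bound $(\omega J_h,J_h)_{\W_x}\geq\omega_{\min}\|J_h\|_{L^2(\W_x)}^2$ from below, and estimate the right-hand side by Cauchy--Schwarz followed by Young's inequality, choosing the weight so that half of $\tfrac{\omega_{\min}}{\theta}\|J_h\|_{L^2(\W_x)}^2$ is absorbed on the left.  This yields a differential inequality
\begin{equation}
\frac{\dx[]}{\dx[t]}\|\rho_h\|_{L^2(\W_x)}^2 + \frac{\omega_{\min}}{\theta}\|J_h\|_{L^2(\W_x)}^2 \leq C\,\|\rho_h\|_{L^2(\W_x)}^2,
\end{equation}
with $C$ controlled by $\|E\|_{L^\infty([0,T]\times\W_x)}$, $\theta$, and $\omega_{\min}$.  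For the initial data I would test \eqref{eqn:dde_system-b1-3} with $q_h=\rho_h(0)$ to obtain $\|\rho_h(0)\|_{L^2(\W_x)}\leq\|\rho_{0,h}\|_{L^2(\W_x)}$, since $\rho_h(0)$ is the $L^2$-projection of $\rho_{0,h}$ onto the test space.  Setting $F(t):=\|\rho_h(t)\|_{L^2(\W_x)}^2+\tfrac{\omega_{\min}}{\theta}\int_0^t\|J_h\|_{L^2(\W_x)}^2\,\dx[s]$, the inequality reads $F'(t)\leq C\,F(t)$ with $F(0)\leq\|\rho_{0,h}\|_{L^2(\W_x)}^2$, and Gr\"onwall's inequality then delivers \eqref{eqn:r_J_0_bound}.

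Finally, uniqueness follows immediately from linearity of \eqref{eqn:dde_system-b1} together with the bound just proved: given two solution pairs sharing the data $\rho_{0,h}$, their difference solves \eqref{eqn:dde_system-b1} with zero initial datum, so applying \eqref{eqn:r_J_0_bound} to the difference forces both $\rho_h$ and $J_h$ components to vanish.  The only step requiring genuine care is the exact cancellation of the coupling terms in the first paragraph: it relies on the matching signs between the flux terms in \eqref{eqn:dde_system-b1-1} and the gradient/jump terms in \eqref{eqn:dde_system-b1-2}, i.e.\ on the discrete integration-by-parts structure of the scheme.  A secondary, minor point is packaging the Gr\"onwall argument so that the $L_T^\infty$ norm of $\rho_h$ and the $L_T^2$ norm of $J_h$ are controlled simultaneously; introducing the cumulative energy $F$ above handles this cleanly, and everything downstream is routine coercivity balancing.
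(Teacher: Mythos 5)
Your proposal is correct and follows essentially the same route as the paper's proof: test with $q_h=\theta\rho_h$ (equivalently, test with $\rho_h$ and multiply by $\theta$) and $\tau_h=J_h$, cancel the coupling terms via the discrete adjointness, drop the nonnegative penalty term, absorb the $J_h$ contribution with Young's inequality, and conclude with Gr\"onwall, with uniqueness following by linearity applied to zero initial data. Your explicit remark that $\rho_h(0)$ is the $L^2$-projection of $\rho_{0,h}$ onto the test space, so that $\|\rho_h(0)\|_{L^2(\W_x)}\leq\|\rho_{0,h}\|_{L^2(\W_x)}$, is a small step the paper leaves implicit.
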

        
\begin{proof}
We first focus on \eqref{eqn:r_J_0_bound}.  Choose $q_h=\theta\rho_h$ and $\tau_h=J_h$ in \eqref{eqn:dde_system-b1}.  Adding both equations in \eqref{eqn:dde_system-b1} gives us
\begin{align}\label{eqn:r_J_0_bound:1}
    \frac{\theta}{2}\frac{\dx[]}{\dx[t]}\|\rho_h\|_{L^2(\W_x)}^2 + (\omega J_h,J_h)_{\W_x} + \theta\left<\gamma_I \ljmp\rho_h\rjmp,\ljmp \rho_h\rjmp\right>_{\EIx} = (E\rho_h,J_h)_{\W_x}
\end{align}
for each system.  We note if $\beta=0$, then the interior penalty term is zero since $\rho_h$ is continuous.  Thus \eqref{eqn:r_J_0_bound:1} is true independent of $\beta$.  
Dropping the interior penalty term, bounding the right hand side of \eqref{eqn:r_J_0_bound:1} by H\"older's and Young's inequality, and dividing by $\theta/2$ we arrive at
\begin{align}\label{eqn:r_J_0_bound:2}
    \frac{\dx[]}{\dx[t]}\|\rho_h\|_{L^2(\W_x)}^2 + \frac{\omega_{\min}}{\theta}\| J_h\|_{L^2(\W_x)}^2 \leq \frac{\|E\|_{L^\infty([0,T]\times\W_x)}}{\theta\omega_{\min}}\|\rho_h\|_{L^2(\W_x)}^2
\end{align}
Applying Gr\"onwall's to \eqref{eqn:r_J_0_bound:2} gives us \eqref{eqn:r_J_0_bound}.  

Uniqueness of the solution pair follows from applying \eqref{eqn:r_J_0_bound} to the case where $\rho_{0,h}=0$.  The proof is complete.
\end{proof}

Due to the uniqueness result from Lemma \ref{lem:r_J_0_bound}, we attach the additional corollary.

\begin{corollary}
The full sequences $\{\rho_h^\e\}_\e$ and $\{J_h^\e\}_{\e}$ weakly converge to $\rho_h^0$ and $J_h^0$ respectively in the topologies given in Theorem \ref{thm:dde_limit}. 
\end{corollary}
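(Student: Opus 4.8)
The plan is to execute the standard argument by which uniqueness of the limit problem upgrades subsequential convergence to convergence of the full family. The essential observation is that the a priori bound \eqref{eqn:dde_limit_bound} established in \Cref{thm:dde_limit} holds for \emph{every} $\e\leq\e_{h_v}$, not merely along the subsequence extracted there. Hence the entire family $\{\rho_h^\e\}_\e$, $\{J_h^\e\}_\e$, and $\{\partial_t\rho_h^\e\}_\e$ is uniformly bounded in the Hilbert spaces $L_T^2(L^2(\W_x))$, $L_T^2([L^2(\W_x)]^3)$, and $L_T^2(H_{h,\beta}^{-1}(\W_x))$, respectively. I would phrase the conclusion through a contradiction argument that sidesteps the fact that the weak topology need not be first-countable.

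Suppose, for contradiction, that $\rho_h^\e\not\wto\rho_h^0$ in $L_T^2(L^2(\W_x))$. Then there exist a test function $\phi\in L_T^2(L^2(\W_x))$, a scalar $\delta>0$, and a sequence $\e_k\to 0$ such that $|(\rho_h^{\e_k}-\rho_h^0,\phi)_{L_T^2(L^2(\W_x))}|\geq\delta$ for all $k$. Because $\{\rho_h^{\e_k}\}_k$ and $\{J_h^{\e_k}\}_k$ remain uniformly bounded and $\{\partial_t\rho_h^{\e_k}\}_k$ is bounded in the appropriate dual space, I would extract (by reflexivity, exactly as in Step~1 of the proof of \Cref{thm:dde_limit}) a further subsequence along which $\rho_h^{\e_{k_j}}\wto\tilde\rho$, $J_h^{\e_{k_j}}\wto\tilde J$, and $\partial_t\rho_h^{\e_{k_j}}\wto\partial_t\tilde\rho$ in the topologies of \Cref{thm:dde_limit}. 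Step~2 of the proof of \Cref{thm:dde_limit} then applies verbatim: since the remainder bounds on $\Theta_1,\dots,\Theta_5$ in \eqref{eqn:theta1_bnd}, \eqref{eqn:theta3_bnd}, \eqref{eqn:theta4_bnd}, \eqref{eqn:theta5_bnd}, together with the scaling of $\partial_t J_h^\e$ in \eqref{eqn:dt_J_eps}, hold uniformly in $\e$, the right-hand sides vanish in the limit, and the pair $\{\tilde\rho,\tilde J\}$ satisfies the full system \eqref{eqn:dde_system-b1}, including the projected initial condition \eqref{eqn:dde_system-b1-3} with the \emph{same fixed} data $\rho_{0,h}$.

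Because $\{\rho_h^0,J_h^0\}$ also solves \eqref{eqn:dde_system-b1} with that initial data, the uniqueness statement of \Cref{lem:r_J_0_bound} forces $\tilde\rho=\rho_h^0$ and $\tilde J=J_h^0$. In particular $(\rho_h^{\e_{k_j}}-\rho_h^0,\phi)_{L_T^2(L^2(\W_x))}\to 0$, contradicting the separation by $\delta$. This establishes $\rho_h^\e\wto\rho_h^0$ for the full family; the identical contradiction argument, applied to a test function in $L_T^2([L^2(\W_x)]^3)$ and to the dual pairing in $L_T^2((S_{x,h}^0)^*)$ or $L_T^2((V_{x,h}^0)^*)$, gives the convergence of $\{J_h^\e\}_\e$ and of $\{\partial_t\rho_h^\e\}_\e$. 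The only point requiring any care is ensuring that the limit-system derivation is genuinely independent of the chosen subsequence, which it is precisely because the a priori estimates of \Cref{sect:a_priori_estimates} are uniform in $\e$ and the initial data is fixed; once \Cref{lem:r_J_0_bound} supplies uniqueness, no further obstacle arises.
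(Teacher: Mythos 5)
Your proposal is correct and follows the same route the paper intends: the corollary is stated as an immediate consequence of the uniqueness result in \Cref{lem:r_J_0_bound}, combined with the fact that the a priori bounds of \Cref{sect:a_priori_estimates} hold uniformly for all $\e\leq\e_{h_v}$, so every sequence $\e_k\to 0$ has a further subsequence whose weak limit solves \eqref{eqn:dde_system-b1} with the same data and must therefore equal $\{\rho_h^0,J_h^0\}$. Your contradiction argument simply fills in, carefully and correctly, the standard subsequence-plus-uniqueness details that the paper leaves implicit.
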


\section{Error Estimates}\label{sect:error_est}

In this section we develop error estimates for $\rho_h^\e$ against the true drift-diffusion limit $\rho^0$ which solves \eqref{eqn:dde-cont}.  The error estimates are created by comparing both against the discrete drift-diffusion $\rho_h^0$ which solves \eqref{eqn:dde_system-b1}.  This is summarized in the following theorem whose proof we delay until the end of the section.  

\begin{theorem}\label{thm:error_est}
Suppose $\rho^0\in L_T^2(H^s(\W))$ and $J^0\in L_T^2([L^2(\W_x)]^3)$ satisfy \eqref{eqn:dde-cont-system} for some $s\geq 2$, $\omega\in W^{r,\infty}(\W_x)$, and $E\in L_T^\infty(W^{r,\infty}(\W_x))$ for some $r\geq s-1$.  Define
\begin{equation}
\Cwr = \|\omega\|_{W^{r,\infty}(\W_x)}\|E\|_{L_T^\infty(W^{r,\infty}(\W_x))}.
\end{equation}
Then for any $\e\leq\e_{h_v}$ where $\e_{h_v}$ is defined in \eqref{eqn:eps_h} we have the following error estimate:
\begin{align}\label{eqn:err-eps-h}
\begin{split}
    \|\rho_h^\e-\rho^0\|_{L_T^2(L^2(\W_x))} &\lss \sqrt{\frac{\e}{h_x}}\|g_{0,h}\|_{L^2(\W)} + h_x^{\min\{k+1,s\}-1}\|\rho^0\|_{L_T^2(H^s(\W_x))} \\
   &\quad+ \Cwr h_x^{\min\{k+1,s-1\}-\beta/2}\|\rho^0\|_{L_T^2(H^s(\W_x))}.
\end{split}
\end{align}
\end{theorem}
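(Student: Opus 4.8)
The plan is to insert the discrete drift-diffusion solution $\rho_h^0$ from \Cref{thm:dde_limit} and split the error with the triangle inequality,
\[
\|\rho_h^\e-\rho^0\|_{L_T^2(L^2(\W_x))} \le \|\rho_h^\e-\rho_h^0\|_{L_T^2(L^2(\W_x))} + \|\rho_h^0-\rho^0\|_{L_T^2(L^2(\W_x))}.
\]
The first term is an $\e$-convergence estimate that should reproduce the $\sqrt{\e/h_x}$ contribution, while the second is an $\e$-independent discontinuous Galerkin error estimate for the scheme \eqref{eqn:dde_system-b1} against the exact system \eqref{eqn:dde-cont-system} and should produce the two $h_x$-power contributions carrying $\|\rho^0\|_{L_T^2(H^s(\W_x))}$. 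I would establish each bound as a separate intermediate result and combine.

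For the first term I would form error equations by subtracting the limiting system \eqref{eqn:dde_system-b1} from the evolution equations \eqref{eqn:rho_eps_dde} and \eqref{eqn:J_eps_dde}. The right-hand sides then consist only of the remainders $\e^\b\Theta_1,\Theta_2$ from the continuity equation and $\e\Theta_3,\sqrt{\e^{\b+1}}\Theta_4,\sqrt{\e}\Theta_5$ together with the transient term $\e^2(\partial_t J_h^\e,\tau_h)$ from the constitutive equation. Because \eqref{eqn:rho_eps_dde} holds for all $q_h\in V_{x,h}$ but \eqref{eqn:dde_system-b1-1} only for zero-trace test functions, the two equations must be reconciled on the smaller test space; since $\rho_h^\e$ has only $O(\sqrt{\e})$ boundary trace, this is handled by working with $\mSh\rho_h^\e$ (which matches $\rho_h^0$ at $t=0$) and absorbing the correction $\|\rho_h^\e-\mSh\rho_h^\e\|$, itself $\lss\sqrt{\e/h_x}\,\|g_{0,h}\|$ by \eqref{eqn:rho-and-J-stab-proj} and \Cref{ass:eps_h_relation}. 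Mimicking the energy argument of \Cref{lem:r_J_0_bound} --- testing continuity with $\theta(\mSh\rho_h^\e-\rho_h^0)$ and the constitutive relation with $J_h^\e-J_h^0$, adding, and using the vanishing initial data from \eqref{eqn:init_J} --- reduces the estimate to controlling the remainders via \Cref{lem:rho_eps_dde}, \Cref{lem:J_eps_dde} and the a priori bounds \eqref{eqn:rho-and-J-stab-mp-g}, \eqref{eqn:rho-and-J-stab-jmps}. The decisive contribution is $\e^2(\partial_t J_h^\e,\cdot)$: by \eqref{eqn:dt_J_eps} one has $\e^2\|\partial_t J_h^\e\|\lss\sqrt{\e/h_x}\,\|g_{0,h}\|$, which is exactly the asserted scaling, and a Gr\"onwall step then gives $\|\rho_h^\e-\rho_h^0\|_{L_T^2(L^2(\W_x))}\lss\sqrt{\e/h_x}\,\|g_{0,h}\|_{L^2(\W)}$.

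For the second term I would run a conventional mixed/LDG error analysis of \eqref{eqn:dde_system-b1}. Since $\rho^0$ and $J^0$ are smooth, their jumps across $\EIx$ and boundary traces vanish, so they satisfy the discrete weak forms up to interpolation defects; subtracting yields an error equation. I would split $\rho_h^0-\rho^0=\xi+\eta$ and $J_h^0-J^0=\xi_J+\eta_J$ using suitable projections, bound the approximation parts $\eta,\eta_J$ by standard $H^s$ interpolation estimates, and bound the discrete parts $\xi,\xi_J$ by the stability estimate \eqref{eqn:r_J_0_bound} of \Cref{lem:r_J_0_bound} applied to the error equation. The power $h_x^{\min\{k+1,s\}-1}$ arises from the diffusive and flux-consistency terms in \eqref{eqn:dde_system-b1-1}, whereas the factor $\Cwr$ and the power $h_x^{\min\{k+1,s-1\}-\b/2}$ arise from the consistency error of the constitutive relation \eqref{eqn:dde_system-b1-2}, where the variable coefficients $\omega,E$ act through their regularity ($r\ge s-1$) and the $\e$-Lax-Friedrichs penalty degrades the jump control by $h_x^{-\b/2}$.

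I expect the first term to be the main obstacle. In contrast to the clean stability computation of \Cref{lem:r_J_0_bound}, the error equation carries several $\e$- and $h_x$-weighted remainders that individually diverge as $\e\to0$ or $h_x\to0$; obtaining the sharp exponent $\tfrac12$ in $\sqrt{\e/h_x}$ requires pairing each remainder with precisely the right a priori estimate from \Cref{sect:a_priori_estimates} --- in particular the $\e^{3/2}$-scaled control of $\partial_t J_h^\e$ in \eqref{eqn:dt_J_eps} --- and Young-absorbing the resulting $\e^3\|\partial_t J_h^\e\|^2$-type terms rather than estimating them crudely. The additional bookkeeping caused by the test-space mismatch between the full and zero-trace spaces compounds this difficulty. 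The DG analysis of the second term, though lengthy, is essentially standard once the consistency of \eqref{eqn:dde_system-b1} is written out.
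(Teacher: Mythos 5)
Your proposal is correct and follows essentially the same route as the paper: the paper proves \Cref{thm:error_est} by exactly this triangle-inequality split through $\rho_h^0$, establishing the $\sqrt{\e/h_x}$ piece in \Cref{thm:err-eps} and \Cref{lem:err_bnd} via the error equations \eqref{eqn:error_eqns}, the decomposition through $\mSh\rho_h^\e$ (handling the test-space mismatch and giving $\xi_\rho^\e(0)=0$), an energy/Gr\"onwall argument, and the $\e^{3/2}$-scaled bound \eqref{eqn:dt_J_eps} on $\partial_t J_h^\e$; and establishing the $h_x$ pieces in \Cref{lem:err-h} by the standard projection-splitting DG energy argument you outline, with $\Cwr$ and the $-\beta/2$ loss arising from the regularity of $J^0=\frac{1}{\omega}(-\theta\grad_x\rho^0+E\rho^0)$ and the $\beta=1$ edge terms. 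No gaps worth noting.
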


\subsection{Error Estimates in \texorpdfstring{$\e$}{epsilon}}

Here we build estimates comparing $\rho_h^\e$ to $\rho_{h}^0$.  Define $e_\rho^\e=\rho_h^\e-\rho_h^0$ and $e_J^\e=J_h^\e-J_h^0$.  Subtracting \eqref{eqn:dde_system-b1} from the system \eqref{eqn:rho_eps_dde} and \eqref{eqn:J_eps_dde} gives us the following error equations:
\begin{align}\label{eqn:error_eqns}
\begin{split}
    \left(\partial_t e_\rho^\e,q_h\right)_{\W_x} &- (e_J^\e,\grad_x q_h)_{\W_x} + \left<\lavg e_J^\e\ravg,\ljmp q_h\rjmp\right>_{\EIx} +  \e^{\b-1}\left<\gamma_I \ljmp e_\rho^\e\rjmp,\ljmp q_h\rjmp\right>_{\EIx} \\
    & = \e^{\b}\Theta_1(\tilde{g}_h^\e,q_h) \\
    (\omega e_J^\e,\tau_h)_{\W_x} &+ \theta(\grad_x e_\rho^\e,\tau_h)_{\W_x} - \theta\left<\ljmp e_\rho^\e\rjmp,\lavg \tau_h\ravg\right>_{\EIx}
    - (E e_\rho^\e,\tau_h)_{\W_x} \\ 
    &= \e\Theta_3(\tilde{g}_h^\e,\tau_h) + \sqrt{\e^{\b+1}}\Theta_4(\rho_h^\e,\tau_h) + \sqrt{\e}\Theta_5(\rho_h^\e,\tau_h) +\sqrt{\e}\left(\e^{3/2}\partial_t J_h^\e,\tau_h\right)_{\W_x}
\end{split}
\end{align}
for all $\tau_h\in[V_{x,h}]^3$ and $q_h\in V_{x,h}^0$ if $\b=1$ and $q_h\in S_{x,h}^0$ if $\b=0$.

In order to bound the error of $e_\rho^\e$, we decompose the $\rho$ error as $e_\rho^\e=\eta_\rho^\e - \xi_\rho^\e:=(\rho_h^\e-\mSh{\rho_h^\e}) - (\rho_h^0-\mSh{\rho_h^\e})$. Thus $\eta_\rho^\e\in V_{x,h}$ and $\xi_\rho^\e\in V_{x,h}^0$ if $\b=1$ and $\xi_\rho^\e\in S_{x,h}^0$ if $\b=0$.

\begin{lemma}\label{lem:err_bnd}
For any $h_x>0$, $h_v>0$, and $\e\leq\e_{h_v}$ where $\e_{h_v}$ is defined in Lemma \ref{lem:g-l2-stab}, $e_\rho^\e$ and $e_J^\e$ satisfy the following error bound:
\begin{align}\label{eqn:err_bnd}
\begin{split}
    \|\xi_\rho^\e(T)\|_{L^2(\W)}^2 +& \frac{\omega_{\min}}{2}\|e_J^\e\|_{L_T^2(L^2(\W_x))}^2 + \frac{\theta\gamma_*}{2}\|\ljmp \xi_\rho^\e\rjmp\|_{L_T^2(L^2(\EIx))}^2 \\
    &\lss \frac{\e}{h_x}\|g_{0,h}\|_{L^2(\W)}^2.
\end{split}
\end{align}
\end{lemma}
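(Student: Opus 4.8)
The plan is to run a discrete energy estimate on the error system \eqref{eqn:error_eqns}, testing the first (density) equation with $q_h=-\theta\xi_\rho^\e$ and the second (current) equation with $\tau_h=e_J^\e$, and then adding the two. Both choices are admissible: $\xi_\rho^\e$ lies in $V_{x,h}^0$ (if $\beta=1$) or $S_{x,h}^0$ (if $\beta=0$), and $e_J^\e\in[V_{x,h}]^3$. Two structural facts make the argument work. First, $\eta_\rho^\e=\rho_h^\e-\mSh\rho_h^\e$ is $L^2(\W_x)$-orthogonal to the conforming space containing $\xi_\rho^\e$; hence $-\theta(\partial_t e_\rho^\e,\xi_\rho^\e)_{\W_x}=\frac{\theta}{2}\frac{d}{dt}\|\xi_\rho^\e\|_{L^2(\W_x)}^2-\theta(\partial_t\eta_\rho^\e,\xi_\rho^\e)_{\W_x}$, and since $\partial_t\eta_\rho^\e=\partial_t\rho_h^\e-\mSh\partial_t\rho_h^\e$ is again orthogonal to $\xi_\rho^\e$, the cross term vanishes. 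Second, the discrete advection forms in the two equations are mutually adjoint by construction (the volume and edge inner products being symmetric): writing $G(z,\tau):=\theta(\grad_x z,\tau)_{\W_x}-\theta\langle\ljmp z\rjmp,\lavg\tau\ravg\rangle_{\EIx}$, the density equation contributes $G(\xi_\rho^\e,e_J^\e)$ and the current equation contributes $G(e_\rho^\e,e_J^\e)$, whose sum collapses to $G(\eta_\rho^\e,e_J^\e)$ because $\xi_\rho^\e+e_\rho^\e=\eta_\rho^\e$. Thus every advection cross term sees only the projection-controlled quantity $\eta_\rho^\e$.

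Adding the two tested equations produces the identity
\[
\frac{\theta}{2}\frac{d}{dt}\|\xi_\rho^\e\|_{L^2(\W_x)}^2 + (\omega e_J^\e,e_J^\e)_{\W_x} + \theta\e^{\b-1}\|\sqrt{\gamma_I}\ljmp\xi_\rho^\e\rjmp\|_{L^2(\EIx)}^2 = \mathcal{F},
\]
where the collision term is bounded below by $\omega_{\min}\|e_J^\e\|_{L^2(\W_x)}^2$ and, by \Cref{lem:tech_lb} ($\gamma_I>\gamma_*$), the penalty term by $\theta\gamma_*\e^{\b-1}\|\ljmp\xi_\rho^\e\rjmp\|_{L^2(\EIx)}^2$; when $\beta=0$ this penalty vanishes identically since $\xi_\rho^\e$ is continuous. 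The right-hand side $\mathcal{F}$ collects the cross terms $G(\eta_\rho^\e,e_J^\e)$, the reaction term $(Ee_\rho^\e,e_J^\e)_{\W_x}$, and (only for $\beta=1$) the penalty cross term $\theta\langle\gamma_I\ljmp\eta_\rho^\e\rjmp,\ljmp\xi_\rho^\e\rjmp\rangle_{\EIx}$, together with the remainders $-\theta\e^\b\Theta_1(\tilde{g}_h^\e,\xi_\rho^\e)$, $\e\Theta_3(\tilde{g}_h^\e,e_J^\e)$, $\sqrt{\e^{\b+1}}\Theta_4(\rho_h^\e,e_J^\e)$, $\sqrt{\e}\Theta_5(\rho_h^\e,e_J^\e)$, and $\sqrt{\e}(\e^{3/2}\partial_t J_h^\e,e_J^\e)_{\W_x}$. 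The initial contribution drops out because $\xi_\rho^\e(0)=\rho_h^0(0)-\mSh\rho_{0,h}=0$: by \Cref{ass:bc} we have $\rho_h^\e(0)=\rho_{0,h}$, while both $\rho_h^0(0)$ (via \eqref{eqn:dde_system-b1-3}) and $\mSh\rho_{0,h}$ are the same $L^2$-projection of $\rho_{0,h}$ onto the conforming space.

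Each term in $\mathcal{F}$ would then be estimated by Cauchy--Schwarz with the trace \eqref{eqn:trace} and inverse \eqref{eqn:inverse} inequalities and Young's inequality, sending every resulting $\|e_J^\e\|_{L^2(\W_x)}^2$ and $\e^{\b-1}\|\ljmp\xi_\rho^\e\rjmp\|_{L^2(\EIx)}^2$ contribution with a small coefficient to the left to be absorbed into the collision and penalty dissipation. The surviving, data-dependent pieces would be controlled after integrating in time by the a priori estimates: $G(\eta_\rho^\e,e_J^\e)$ and the $\eta_\rho^\e$ part of the reaction term by $\|\eta_\rho^\e\|_{L_T^2(H_h^1(\W_x))}\lss\sqrt{\e/h_x}\,\|g_{0,h}\|_{L^2(\W)}$ from \eqref{eqn:rho-and-J-stab-proj}; the $\Theta_1,\Theta_3$ pieces by \eqref{eqn:rho-and-J-stab-mp-g}; the $\Theta_4,\Theta_5$ and penalty cross pieces by the jump and boundary bounds \eqref{eqn:rho-and-J-stab-jmps}; and the $\partial_t J_h^\e$ piece by \eqref{eqn:dt_J_eps}. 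The delicate bookkeeping is that several of these bounds first produce factors such as $\e^2/h_x^2$ (from $\Theta_3$) or $\e^{\b+1}/h_x$ (from $\Theta_4$); here \Cref{ass:eps_h_relation}, i.e.\ $\e/h_x\lss1$, together with $h_x\le1$, is exactly what reduces each of them to the target scale $\tfrac{\e}{h_x}\|g_{0,h}\|_{L^2(\W)}^2$. The lone exception is the $\xi_\rho^\e$ part of the reaction term, $(E\xi_\rho^\e,e_J^\e)_{\W_x}$, which produces a $\|\xi_\rho^\e\|_{L^2(\W_x)}^2$ contribution that must be retained for Gr\"onwall.

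The estimate would then close by applying Gr\"onwall's inequality to the resulting differential inequality
\[
\frac{\theta}{2}\frac{d}{dt}\|\xi_\rho^\e\|_{L^2(\W_x)}^2 + \frac{\omega_{\min}}{2}\|e_J^\e\|_{L^2(\W_x)}^2 + \frac{\theta\gamma_*}{2}\e^{\b-1}\|\ljmp\xi_\rho^\e\rjmp\|_{L^2(\EIx)}^2 \lss \|\xi_\rho^\e\|_{L^2(\W_x)}^2 + F(t),
\]
with $\int_0^T F\,dt\lss\tfrac{\e}{h_x}\|g_{0,h}\|_{L^2(\W)}^2$ and $\xi_\rho^\e(0)=0$; since the two dissipative terms are nonnegative, integrating in time and invoking Gr\"onwall yields \eqref{eqn:err_bnd}. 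I expect the main obstacle to be this remainder bookkeeping: matching all powers of $\e$ and $h_x$ across the five remainder terms and the two values of $\beta$, and in particular verifying that $\Theta_1$ and the penalty cross term---present only for $\beta=1$ and weighted by $\e^{\b-1}=\e^0$---are genuinely absorbed by the $\e^{\b-1}$-weighted jump dissipation rather than overwhelming it. Securing the adjoint cancellation and the projection orthogonality exactly is what guarantees the cross terms involve only $\eta_\rho^\e$; without that reduction the clean $\e/h_x$ rate would not emerge.
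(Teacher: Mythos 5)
Your proposal is correct and follows essentially the same route as the paper's proof: the same test functions $q_h=-\theta\xi_\rho^\e$ and $\tau_h=e_J^\e$, the same splitting $e_\rho^\e=\eta_\rho^\e-\xi_\rho^\e$ with the orthogonality $(\partial_t\eta_\rho^\e,\xi_\rho^\e)_{\W_x}=0$ and the advection cancellation, absorption of the $\nu$-weighted dissipation into the collision and penalty terms, the a priori bounds of \Cref{lem:rho-and-J-stab} and \Cref{lem:dt_J_eps} together with \Cref{ass:eps_h_relation}, and Gr\"onwall with $\xi_\rho^\e(0)=0$. You are in fact slightly more careful than the paper's identity \eqref{eqn:error_bnd:1}, which silently drops the $\beta=1$ penalty cross term $\theta\langle\gamma_I\ljmp\eta_\rho^\e\rjmp,\ljmp\xi_\rho^\e\rjmp\rangle_{\EIx}$ that you retain; note only that this term is controlled through \eqref{eqn:rho-and-J-stab-proj} rather than \eqref{eqn:rho-and-J-stab-jmps}, since $\|\ljmp\eta_\rho^\e\rjmp\|_{L_T^2(L^2(\EIx))}^2\leq h_x\|\eta_\rho^\e\|_{L_T^2(H_h^1(\W_x))}^2\lss\e\,\|g_{0,h}\|_{L^2(\W)}^2$.
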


\begin{proof}
Choose $q_h=-\theta\xi_\rho^\e$ and $\tau_h = e_J^\e$ in \eqref{eqn:error_eqns}.  Adding the two equations in \eqref{eqn:error_eqns} we arrive at
\begin{align}\label{eqn:error_bnd:1}
\begin{split}
    \theta\left(\partial_t \xi_\rho^\e,\xi_\rho^\e\right)_{\W_x} &+ (\omega e_J^\e,e_J^\e)_{\W_x} +  \theta\left<\gamma_I\ljmp\xi_\rho^\e\rjmp,\ljmp\xi_\rho^\e\rjmp\right>_{\EIx} \\
    &= -\theta(\grad_x\eta_\rho^\e,e_J^\e)_{\W_x} + \theta\left<\lavg e_J^\e\ravg,\ljmp\eta_\rho^\e\rjmp\right>_{\EIx} + (E\eta_\rho^\e,e_J^\e)_{\W_x} \\
    &\quad- (E\xi_\rho^\e,e_J^\e)_{\W_x} - \e\theta\Theta_1(\tilde{g}_h^\e,\xi_\rho^\e) + \e\Theta_3(\tilde{g}_h^\e,e_J^\e) + \sqrt{\e^{\b+1}}\Theta_4(\rho_h^\e,e_J^\e) \\
    &\quad+ \sqrt{\e}\Theta_5(\rho_h^\e,e_J^\e) + \sqrt{\e}\left(\e^{3/2}\partial_t J_h^\e,e_J^\e\right)_{\W_x} + \left(\partial_t \eta_\rho^\e,\xi_\rho^\e\right)_{\W_x}.
\end{split}
\end{align}
We seek to bound each of the terms on the right hand side of \eqref{eqn:error_bnd:1}.  Let 
\begin{equation}
    I_1 := -\theta(\grad_x\eta_\rho^\e,e_J^\e)_{\W_x} + \theta\left<\lavg e_J^\e\ravg,\ljmp\eta_\rho^\e\rjmp\right>_{\EIx} + (E\eta_\rho^\e,e_J^\e)_{\W_x}.
\end{equation}
By use of inverse inequalities, trace inequalities, and Young's inequality, we can bound $I_1$ for any $\nu>0$ by
\begin{align}\label{eqn:error_bnd:2}
    |I_1| \lss \frac{1}{\nu}\|\eta_\rho^\e\|_{L^2(\W_x)}^2 + \frac{1}{\nu}\|\eta_\rho^\e\|_{H_h^1(\W_x)}^2 + \nu\|e_J^\e\|_{L^2(\W_x)}^2.
\end{align}
We can bound $(E\xi_\rho^\e,e_J^\e)_{\W_x}$ with Young's inequality to obtain
\begin{align}\label{eqn:error_bnd:3}
    |(E\xi_\rho^\e,e_J^\e)_{\W_x}| \lss \frac{1}{\nu}\|\xi_\rho^\e\|_{L^2(\W_x)}^2 + \nu\|e_J^\e\|_{L^2(\W_x)}^2
\end{align}
for any $\nu>0$.  For $\Theta_1$, we note if $\beta=0$, then $\xi_\rho^\e$ is continuous in $x$, so $\Theta_1(\tilde{g}_h^\e,\xi_\rho^\e)=0$.  Thus we focus on the case $\beta=1$ for which we use \eqref{eqn:theta1_bnd} and Young's inequality to bound $\Theta_1$ as
\begin{align}\label{eqn:error_bnd:4}
\begin{split}
    |\e\Theta_1(\tilde{g}_h^\e,\xi_\rho^\e)| &\lss \frac{1}{\nu h_x}\|\mhalfh\rho_h^\e-g_h^\e\|_{L^2(\W)}^2 +  \nu\|\ljmp\xi_\rho^\e\rjmp\|_{L^2(\EIx)}^2 
\end{split}
\end{align}
for any $\nu>0$.  We use \eqref{eqn:theta3_bnd} and Young's inequality to obtain
\begin{align}\label{eqn:error_bnd:5}
    |\e\Theta_3(\tilde{g}_h^\e,e_J^\e)| \lss \frac{1}{\nu h_x^2}\|\mhalfh\rho_h^\e-g_h^\e\|_{L^2(\W)}^2 +  \nu\|e_J^\e\|_{L^2(\W_x)}^2
\end{align}
for any $\nu>0$.  Similarly, we can bound $\Theta_4$ and $\Theta_5$ by \eqref{eqn:theta4_bnd}, \eqref{eqn:theta5_bnd}, and Young's inequality to find
\begin{align}\label{eqn:error_bnd:6}
    |\sqrt{\e^{\b+1}}\Theta_4(\rho_h^\e,e_J^\e) + \sqrt{\e}\Theta_5(\rho_h^\e,e_J^\e)| \lss \frac{\e^{2\beta}}{\nu h_x}\|\ljmp\rho_h^\e\rjmp\|_{L^2(\EIx)}^2 + \frac{1}{\nu h_x}\|\rho_h^\e\|_{L^2(\partial\W_x)}^2 +  \nu\|e_J^\e\|_{L^2(\W_x)}^2
\end{align}
for any $\nu>0$.  We bound the $\partial_t J_h^\e$ term as
\begin{align}\label{eqn:error_bnd:7}
    \big|\sqrt{\e}\left(\e^{3/2}\partial_t J_h^\e,e_J^\e\right)_{\W_x}\big| \lss \frac{\e}{\nu}\|\e^3\partial_t J_h^\e\|_{L^2(\W_x)}^2 +  \nu\|e_J^\e\|_{L^2(\W_x)}^2
\end{align}
for any $\nu>0$.  The term $\left(\partial_t \eta_\rho^\e,\xi_\rho^\e\right)_{\W_x}$ is zero since by definition $\eta_\rho^\e$ is orthogonal to $\xi_\rho^\e$ in $L^2(\W_x)$.   Injecting \eqref{eqn:error_bnd:2} through \eqref{eqn:error_bnd:7} into \eqref{eqn:error_bnd:1} gives us
\begin{align}\label{eqn:error_bnd:8}
\begin{split}
    \frac{1}{2}\frac{\dx[]}{\dx[t]}\|\xi_\rho^\e\|_{L^2(\W_x)}^2 &+ \omega_{\min}\|e_J^\e\|_{L^2(\W)}^2 + \frac{\theta\gamma_*}{2}\|\ljmp\xi_\rho^\e\rjmp\|_{L^2(\EIx)}^2 \\
    &\lss \frac{1}{\nu}\|\xi_\rho^\e\|_{L^2(\W_x)}^2 + \frac{1}{\nu}\|\eta_\rho^\e\|_{L^2(\W_x)}^2 + \tfrac{1}{\nu}\|\eta_\rho^\e\|_{H_h^1(\W_x)}^2 \\
    &\quad+\frac{1}{\nu h_x}\left(1 + \frac{1}{h_x}\right)\|\mhalfh\rho_h^\e-g_h^\e\|_{L^2(\W)}^2 + \frac{\e}{\nu}\|\e^3\partial_t J_h^\e\|_{L^2(\W_x)}^2 \\
    &\quad + \frac{\e^{2\beta}}{\nu h_x}\|\ljmp\rho_h^\e\rjmp\|_{L^2(\EIx)}^2 + \frac{1}{\nu h_x}\|\rho_h^\e\|_{L^2(\partial\W_x)}^2 \\
    &\quad + \nu\|e_J^\e\|_{L^2(\W)}^2 + \nu \|\ljmp\xi_\rho^\e\rjmp\|_{L^2(\EIx)}^2
\end{split}
\end{align}
for all $\nu>0$. Choosing $\nu$, independent of $\e$ and $h_x$, sufficiently small we can move the last two terms on the right of \eqref{eqn:error_bnd:8} over to the left. Since $\xi_\rho^\e(0) = 0$, we can then apply Gr\"onwall's to obtain (assuming $h_x \lss 1$) 
\begin{align}
\begin{split}\label{eqn:error_bnd:9}
    \|\xi_{\rho}^\e\|_{L_T^\infty(L^2(\W_x))}^2 &+ \|e_J^\e\|_{L_T^2(L^2(\W_x))}^2 + \|\ljmp\xi_\rho^\e\rjmp\|_{L_T^2(L^2(\EIx))}^2 \\
    &\lss \|\eta_\rho^\e\|_{L_T^2(L^2(\W_x))}^2  +\|\eta_\rho^\e\|_{L_T^2(H_h^1(\W_x))}^2 \\
    &\quad+ \frac{1}{\nu h_x^2}\|\mhalfh\rho_h^\e-g_h^\e\|_{L_T^2(L^2(\W))}^2 + \e\|\e^3\partial_t J_h^\e\|_{L_T^2(L^2(\W_x))}^2 \\ 
    &\quad + \frac{\e^{2\beta}}{ h_x}\|\ljmp\rho_h^\e\rjmp\|_{L_T^2(L^2(\EIx))}^2 + \frac{1}{ h_x}\|\rho_h^\e\|_{L_T^2(L^2(\partial\W_x))}^2.
\end{split}
\end{align}
Note that the appropriate norms of $\eta_\rho^\e=\rho_h^\e-\mSh\rho_h^\e$, $\rho_h^\e$, and $\mhalfh\rho_h^\e-g_h^\e$ can all be bounded from \Cref{lem:rho-and-J-stab} while $\frac{\partial}{\partial t}J_h^\e$ can be bounded in $L^2$ by \Cref{lem:dt_J_eps}.  Applying these bounds to \eqref{eqn:error_bnd:9}, recalling \Cref{ass:eps_h_relation}, and noticing $\e^2\lss \e$ gives us \eqref{eqn:err_bnd}.  The proof is complete.
\end{proof}

We can now show the $\e$-error estimate.

\begin{theorem}\label{thm:err-eps}
Let $\{\rho_h^0,J_h^0\}$ satisfy \eqref{eqn:dde_system-b1}.  Then for any $\e\leq\e_{h_v}$, where $\e_{h_v}$ is defined in \eqref{eqn:eps_h}, we have the following error estimate:
\begin{align}\label{eqn:eps_err_est}
    \|\rho_h^\e-\rho_h^0\|_{L_T^2(L^2(\W_x))} + \|J_h^\e-J_h^0\|_{L_T^2(L^2(\W_x))} \lss \sqrt{\frac{\e}{h_x}}\|g_{0,h}\|_{L^2(\W)}.
\end{align}
\end{theorem}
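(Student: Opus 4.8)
The plan is to read off both contributions of \eqref{eqn:eps_err_est} from the energy estimate \eqref{eqn:err_bnd} of \Cref{lem:err_bnd}, supplemented by the projection bound \eqref{eqn:rho-and-J-stab-proj} of \Cref{lem:rho-and-J-stab}. The current-density error requires no extra work: isolating the term $\tfrac{\omega_{\min}}{2}\|e_J^\e\|_{L_T^2(L^2(\W_x))}^2$ on the left of \eqref{eqn:err_bnd} and taking square roots gives $\|e_J^\e\|_{L_T^2(L^2(\W_x))}\lss \sqrt{\e/h_x}\,\|g_{0,h}\|_{L^2(\W)}$, which is exactly the second term in \eqref{eqn:eps_err_est}.

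For the density error I would use the splitting $e_\rho^\e=\eta_\rho^\e-\xi_\rho^\e$ and the triangle inequality, $\|e_\rho^\e\|_{L_T^2(L^2(\W_x))}\leq \|\eta_\rho^\e\|_{L_T^2(L^2(\W_x))}+\|\xi_\rho^\e\|_{L_T^2(L^2(\W_x))}$, and bound each piece. The term $\xi_\rho^\e=\rho_h^0-\mSh\rho_h^\e$ is controlled by \eqref{eqn:err_bnd}: although that estimate is displayed at the final time, the Gr\"onwall argument in its proof applies on $[0,t]$ for every $t\le T$ and hence delivers the $L_T^\infty$ bound $\|\xi_\rho^\e\|_{L_T^\infty(L^2(\W_x))}^2\lss \tfrac{\e}{h_x}\|g_{0,h}\|_{L^2(\W)}^2$. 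Because $[0,T]$ is finite, $\|\xi_\rho^\e\|_{L_T^2(L^2(\W_x))}\le \sqrt{T}\,\|\xi_\rho^\e\|_{L_T^\infty(L^2(\W_x))}\lss \sqrt{\e/h_x}\,\|g_{0,h}\|_{L^2(\W)}$.

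The remaining piece $\eta_\rho^\e=\rho_h^\e-\mSh\rho_h^\e$ is precisely the projection error already estimated in \eqref{eqn:rho-and-J-stab-proj}, which yields
\[
\|\eta_\rho^\e\|_{L_T^2(L^2(\W_x))}\lss \sqrt{\frac{\e}{h_x}}\Bigl(\sqrt{\frac{\e}{h_x}}+1\Bigr)\|g_{0,h}\|_{L^2(\W)}.
\]
Using \Cref{ass:eps_h_relation}, so that $\sqrt{\e/h_x}+1\lss 1$, this collapses to $\|\eta_\rho^\e\|_{L_T^2(L^2(\W_x))}\lss \sqrt{\e/h_x}\,\|g_{0,h}\|_{L^2(\W)}$. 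Adding the three bounds produces \eqref{eqn:eps_err_est}.

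Since all the hard analysis was carried out in \Cref{lem:err_bnd}, I do not expect a genuine obstacle here; the theorem is in effect a corollary of that lemma and the a priori projection estimates of \Cref{lem:rho-and-J-stab}. The only points demanding care are the passage from the final-time form of \eqref{eqn:err_bnd} to an $L_T^\infty$-in-time statement (needed to produce the $L_T^2$ control of $\xi_\rho^\e$), and the bookkeeping that absorbs the benign factor $\sqrt{\e/h_x}+1$ through \Cref{ass:eps_h_relation}.
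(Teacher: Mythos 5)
Your proposal is correct and follows essentially the same route as the paper: the same splitting $e_\rho^\e=\eta_\rho^\e-\xi_\rho^\e$, the triangle plus H\"older (in time) inequalities, the projection bound \eqref{eqn:rho-and-J-stab-proj} for $\eta_\rho^\e$, and \Cref{lem:err_bnd} for both $\xi_\rho^\e$ and $e_J^\e$. Your observation that the Gr\"onwall argument yields an $L_T^\infty$ control of $\xi_\rho^\e$ (not just a final-time bound) is exactly the point the paper relies on implicitly---indeed its intermediate estimate \eqref{eqn:error_bnd:9} already carries the $L_T^\infty$ norm---so no gap remains.
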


\begin{proof}
Using the triangle inequality and H\"older's inequality, we have
\begin{align*}
\|\rho_h^\e-\rho_h^0\|_{L_T^2(L^2(\W_x))} &\leq \|\eta_\rho^\e\|_{L_T^2(L^2(\W_x))} + \|\xi_\rho^\e\|_{L_T^2(L^2(\W_x))} \\
&\lss \|\eta_\rho^\e\|_{L_T^2(L^2(\W_x))} + \|\xi_\rho^\e\|_{L_T^\infty(L^2(\W_x))}
\end{align*}
which we can bounded by \eqref{eqn:rho-and-J-stab-proj} and \eqref{eqn:err_bnd}.  The $J_h^\e$ error estimate follows from \eqref{eqn:err_bnd}.  The proof is complete.
\end{proof}

\subsection{Error Estimate in \texorpdfstring{$h$}{h}}

We now focus on the error estimates of the limiting drift diffusion system \eqref{eqn:dde_system-b1}.  Here we guarantee a positive rate of convergence for $k_x\geq 1$.  The polynomial degree restriction is not surprising as traditionally an interior penalty term on $J_{h}^0$ is required is order to obtain a positive rate of convergence for piecewise-constant polynomial approximations (see \cite[Table 2.6]{Riv2008}).  We note that DG discretizations of \eqref{eqn:dde-cont-system} have been studied for the one-dimensional case in \cite{liu2016analysis}; however, their discretization defines the auxiliary variable in the system as a scalar multiple of $\grad_x\rho_h^0$ rather than $J_h^\e$.  Additionally,  their error estimates rely on $\rho_h^0$ being discontinuous in space and the fluxes for $\rho_h^0$ and $J_h^0$ to be alternating so that the Gauss-Radau projection can be utilized.  Since both of these properties do not hold for \eqref{eqn:dde_system-b1}, we include our own error estimates in $h$.

\begin{lemma}\label{lem:err-h}
Suppose $\rho^0\in L_T^2(H^s(\W))$ and $J^0\in L_T^2([L^2(\W_x)]^3)$ satisfy \eqref{eqn:dde-cont-system} for some $s\geq 2$, $\omega\in W^{r,\infty}(\W_x)$, and $E\in L_T^\infty(W^{r,\infty}(\W_x))$ for some $r\geq 1$.  Define $\mu = \min\{r,s-1\}$ and recall $\Cwr$ from \Cref{thm:error_est}.
Then
\begin{align}\label{eqn:h-err-est}
\begin{split}
   &\|\rho_h^0-\rho^0\|_{L_T^2(L^2(\W_x))} + 
   \|J_h^0-J^0\|_{L_T^2(L^2(\W_x))} \\
   &\qquad\lss h_x^{\min\{k+1,s\}-1}\|\rho^0\|_{L_T^2(H^s(\W_x))} 
   + \Cwr h_x^{\min\{k+1,\mu\}-\beta/2}\|\rho^0\|_{L_T^2(H^s(\W_x))}.
\end{split}
\end{align}
\end{lemma}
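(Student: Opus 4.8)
The plan is to run the standard energy--based finite element argument, built on the consistency (Galerkin orthogonality) of the scheme \eqref{eqn:dde_system-b1} together with a careful choice of projections that annihilates the terms which would otherwise require an inverse inequality and spoil the rate.

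First I would establish consistency. Because $\rho^0$ is continuous with vanishing boundary trace and $J^0\in H(\div;\W_x)$ has continuous normal component, the exact pair $\{\rho^0,J^0\}$ satisfies \eqref{eqn:dde_system-b1} \emph{exactly}: testing $\partial_t\rho^0+\grad_x\cdot J^0=0$ and $\omega J^0+\theta\grad_x\rho^0-E\rho^0=0$ with discrete functions and integrating by parts cell-wise (\Cref{lem:parts}), every jump and boundary term drops out since $\ljmp\rho^0\rjmp=0$, $\ljmp J^0\cdot n_x\rjmp=0$, and the test functions carry zero boundary trace. Subtracting from \eqref{eqn:dde_system-b1} gives Galerkin orthogonality: writing (within this proof) $e_\rho:=\rho_h^0-\rho^0$ and $e_J:=J_h^0-J^0$,
\[
(\partial_t e_\rho, q_h)_{\W_x} - (e_J, \grad_x q_h)_{\W_x} + \left<\lavg e_J\ravg,\ljmp q_h\rjmp\right>_{\EIx} + \left<\gamma_I\ljmp e_\rho\rjmp,\ljmp q_h\rjmp\right>_{\EIx} = 0,
\]
\[
(\omega e_J,\tau_h)_{\W_x} + \theta(\grad_x e_\rho,\tau_h)_{\W_x} - \theta\left<\ljmp e_\rho\rjmp,\lavg\tau_h\ravg\right>_{\EIx} - (E e_\rho,\tau_h)_{\W_x} = 0
\]
for all admissible $q_h$ and all $\tau_h\in[V_{x,h}]^3$.

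Next I would split the errors through two $L^2$-orthogonal projections chosen to kill the dangerous terms. For $\rho$ I take $\mSh$ (onto $S_{x,h}^0$ if $\beta=0$, onto $V_{x,h}^0$ if $\beta=1$) and set $e_\rho=\xi_\rho+\eta_\rho$ with $\xi_\rho=\rho_h^0-\mSh\rho^0$ and $\eta_\rho=\mSh\rho^0-\rho^0$; for $J$ I take $\Pi_J$, the $L^2$ projection onto $[V_{x,h}]^3$, and set $e_J=\xi_J+\eta_J$. The two crucial gains are $(\partial_t\eta_\rho,\xi_\rho)_{\W_x}=0$, since $\partial_t\eta_\rho$ is the $L^2$-projection error of $\partial_t\rho^0$ while $\xi_\rho$ lies in the range of $\mSh$, and $(\eta_J,\grad_x\xi_\rho)_{\W_x}=0$, since $\grad_x\xi_\rho\in[V_{x,h}]^3$. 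One also checks $\xi_\rho(0)=0$, because $\rho_h^0(0)$ and $\mSh\rho^0(0)$ both equal the conforming $L^2$ projection of $\rho_0$ (using \Cref{ass:bc} and that $\mSh$ composed with the DG $L^2$ projection is $\mSh$). Testing the two orthogonality relations with $q_h=\theta\xi_\rho$ and $\tau_h=\xi_J$ and adding, the coupling terms $-\theta(\xi_J,\grad_x\xi_\rho)+\theta(\grad_x\xi_\rho,\xi_J)$ and the two edge-average terms telescope, leaving the coercive quantity
\[
\tfrac{\theta}{2}\tfrac{\dx[]}{\dx[t]}\|\xi_\rho\|_{L^2(\W_x)}^2 + (\omega\xi_J,\xi_J)_{\W_x} + \theta\left<\gamma_I\ljmp\xi_\rho\rjmp,\ljmp\xi_\rho\rjmp\right>_{\EIx},
\]
bounded below via $\omega\geq\omega_{\min}$ and $\gamma_I>\gamma_*$ (\Cref{lem:tech_lb}). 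Every remaining term carries $\eta_\rho$ or $\eta_J$; these I bound by Cauchy--Schwarz, Young's inequality, the trace inequality \eqref{eqn:trace}, and the inverse inequality \eqref{eqn:inverse}, absorbing the resulting $\nu\|\xi_J\|^2$ and $\nu\|\ljmp\xi_\rho\rjmp\|^2$ into the left side for small $\nu$. Integrating in time and using Gr\"onwall with $\xi_\rho(0)=0$ then controls $\|\xi_\rho\|_{L_T^\infty(L^2(\W_x))}$, $\|\xi_J\|_{L_T^2(L^2(\W_x))}$, and $\|\ljmp\xi_\rho\rjmp\|_{L_T^2(L^2(\EIx))}$ by the projection errors.

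Finally I would quantify the projection errors on the quasi-uniform mesh: $\|\eta_\rho\|_{L^2}\lss h_x^{\min\{k+1,s\}}$, $\|\grad_x\eta_\rho\|_{L^2}\lss h_x^{\min\{k+1,s\}-1}$, and $\|\ljmp\eta_\rho\rjmp\|_{L^2(\EIx)}\lss h_x^{\min\{k+1,s\}-1/2}$ (each times $\|\rho^0\|_{H^s}$), while from $J^0=\omega^{-1}(E\rho^0-\theta\grad_x\rho^0)\in H^\mu(\W_x)$, $\mu=\min\{r,s-1\}$, one has $\|J^0\|_{H^\mu}\lss\Cwr\|\rho^0\|_{H^s}$, hence $\|\eta_J\|_{L^2}\lss\Cwr h_x^{\min\{k+1,\mu\}}\|\rho^0\|_{H^s}$ and $\|\eta_J\|_{L^2(\EIx)}\lss\Cwr h_x^{\min\{k+1,\mu\}-1/2}\|\rho^0\|_{H^s}$. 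The $\grad_x\eta_\rho$ and $\ljmp\eta_\rho\rjmp$ contributions produce the $h_x^{\min\{k+1,s\}-1}$ term; the edge contributions of $\eta_J$ (present only when $\beta=1$, since $\xi_\rho$ is continuous when $\beta=0$) produce the $\Cwr h_x^{\min\{k+1,\mu\}-\beta/2}$ term. The triangle inequality $\|e_\rho\|\leq\|\xi_\rho\|+\|\eta_\rho\|$ and $\|e_J\|\leq\|\xi_J\|+\|\eta_J\|$ then yields \eqref{eqn:h-err-est}. The main obstacle is exactly the bookkeeping of $h_x$-powers through the Lax--Friedrichs penalty: isolating the half-power loss for $\beta=1$ (from the trace inequality applied to $\lavg\eta_J\ravg$ and $\lavg\xi_J\ravg$ on $\EIx$) while confirming its absence for $\beta=0$, and verifying that the projection choices eliminate every term that would otherwise force an inverse inequality on $\xi_\rho$ or $\xi_J$ and degrade the rate.
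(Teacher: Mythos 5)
Your proposal is correct and takes essentially the same route as the paper's proof: the identical splitting $\xi_{\rho}=\rho_h^0-\mSh\rho^0$, $\eta_\rho=\rho^0-\mSh\rho^0$ and $\xi_J=J_h^0-\mPh J^0$, $\eta_J=J^0-\mPh J^0$, the same test functions $q_h=\theta\xi_\rho$ and $\tau_h=\xi_J$, the same two $L^2$-orthogonality identities $(\partial_t\eta_\rho,\xi_\rho)_{\W_x}=0$ and $(\eta_J,\grad_x\xi_\rho)_{\W_x}=0$, and the same coercivity--Gr\"onwall--projection-estimate conclusion, with the $h_x^{-\beta/2}$ loss traced to the edge terms that vanish for $\beta=0$. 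The only cosmetic differences are that you spell out the consistency of \eqref{eqn:dde_system-b1} for $\{\rho^0,J^0\}$ and the identity $\xi_\rho(0)=0$ (both used implicitly in the paper), and you bound the $\eta$-terms by direct $H^1$/trace approximation estimates where the paper routes everything through discrete trace and inverse inequalities applied to the $L^2$ projection error---both yield the identical rates in \eqref{eqn:h-err-est}.
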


\begin{proof}
We decompose $e_{h,\rho}:=\rho_h^0-\rho_0$ and $e_{h,J}:=J_h^0-J^0$ as
\begin{subequations}
\begin{align}
        e_{h,\rho}
        &=\xi_{h,\rho}-\eta_{h,\rho}:=(\rho_h^0-\mSh\rho^0)-(\rho^0-\mSh\rho^0) ,\\
        e_{h,J} &= \xi_{h,J}-\eta_{h,J}:=(J_h^0-\mPh J^0)-(J^0-\mPh J^0),
\end{align}
\end{subequations}
where $\mPh$ is the $L^2$-orthogonal  projection onto $V_h$.  Because $\rho^0,J^0$ also solve \eqref{eqn:dde_system-b1}, the differences $e_{h,\rho}$ and $e_{h,J}$ satisfy
\begin{align}\label{eqn:h-err-eqn}
\begin{split}
    \left(\partial_t e_{h,\rho},q_h\right)_{\W_x} - (e_{h,J},\grad_x q_h)_{\W_x} + \left<\lavg e_{h,J}\ravg,\ljmp q_h\rjmp\right>_{\EIx} +  \left<\gamma_I \ljmp e_{h,\rho}\rjmp,\ljmp q_h\rjmp\right>_{\EIx} &= 0 \\
    (\omega e_{h,J},\tau_h)_{\W_x} + \theta(\grad_x e_{h,\rho},\tau_h)_{\W_x} - \theta\left<\ljmp e_{h,\rho}\rjmp,\lavg \tau_h\ravg\right>_{\EIx}
    - (E e_{h,\rho},\tau_h)_{\W_x} &= 0.
\end{split} 
\end{align}
for all $\tau_h\in [V_{x,h}]^3$ and $q_h\in S_{x,h}^0$ if $\beta=0$ and $q_h\in V_{x,h}^0$ if $\beta=1$.  Choosing $q_h=\theta\xi_{h,\rho}$ and $\tau_h=\xi_{h,J}$ in \eqref{eqn:h-err-eqn}  gives 
\begin{align}\label{eqn:sub-err-eqn}
\begin{split}
   &\theta \left(\partial_t \xi_{h,\rho},\xi_{h,\rho}\right)_{\W_x} - \theta(\xi_{h,J},\grad_x\xi_{h,\rho})_{\W_x} + \theta\left<\lavg \xi_{h,J}\ravg,\ljmp\xi_{h,\rho}\rjmp\right>_{\EIx} + 
   \theta\left<\gamma_I \ljmp \xi_{h,\rho}\rjmp,\ljmp \xi_{h,\rho}\rjmp\right>_{\EIx} \\
    & \quad = \theta\left(\partial_t \eta_{h,\rho},\xi_{h,\rho}\right)_{\W_x} 
    - \theta(\eta_{h,J},\grad_x\xi_{h,\rho})_{\W_x} 
    + \theta\left<\lavg\eta_{h,J}\ravg,\ljmp\xi_{h,\rho}\rjmp\right>_{\EIx} + \theta\left<\gamma_I \ljmp \eta_{h,\rho}\rjmp,\ljmp \xi_{h,\rho}\rjmp\right>_{\EIx}, 
\end{split} \\ \label{eqn:sub-err-eqn:2}
\begin{split}
    &(\omega \xi_{h,J},\xi_{h,J})_{\W_x} + \theta(\grad_x\xi_{h,\rho},\xi_{h,J})_{\W_x} - \theta\left<\ljmp\xi_{h,\rho}\rjmp,\lavg \xi_{h,J}\ravg\right>_{\EIx} - (E\xi_{h,\rho},\xi_{h,J})_{\W_x}  \\
    & \quad =(\omega\eta_{h,J},\xi_{h,J})_{\W_x} 
    + \theta(\grad_x\eta_{h,\rho},\xi_{h,J})_{\W_x} 
   - \theta\left<\ljmp\eta_{h,\rho}\rjmp,\lavg \xi_{h,J}\ravg\right>_{\EIx} - (E\eta_{h,\rho},\xi_{h,J})_{\W_x}. 
\end{split}
\end{align}
By properties of the $L^2$-projection. $\left(\partial_t \eta_{h,\rho},\xi_{h,\rho}\right)_{\W_x}=0$ and $(\eta_{h,J},\grad_x\xi_{h,\rho})_{\W_x}=0$.  Adding \eqref{eqn:sub-err-eqn} and \eqref{eqn:sub-err-eqn:2} gives 
\begin{align}\label{eqn:h-err-eqn:1}
\begin{split}
   \tfrac{\theta}{2}\tfrac{\dx[]}{\dx[t]}\|\xi_{h,\rho}\|_{L^2(\W_x)}^2 + 
   \omega_{\min}\|\xi_{h,J}\|_{L^2(\W_x)}^2 &+ 
   \gamma_*\theta\|\left\ljmp\xi_{h,\rho}\rjmp\right\|_{L^2(\W_x)}^2 \leq \theta\left<\lavg \eta_{h,J}\ravg,\ljmp\xi_{h,\rho}\rjmp\right>_{\EIx} \\
   &\quad+ \theta\left<\gamma_I \ljmp \eta_{h,\rho}\rjmp,\ljmp \xi_{h,\rho}\rjmp\right>_{\EIx} + (E\xi_{h,\rho},\xi_{h,J})_{\W_x}  \\
    &\quad+(\omega\eta_{h,J},\xi_{h,J})_{\W_x} + \theta(\grad_x\eta_{h,\rho},\xi_{h,J})_{\W_x} \\
    &\quad- \theta\left<\ljmp\eta_{h,\rho}\rjmp,\lavg \xi_{h,J}\ravg\right>_{\EIx} - (E\eta_{h,\rho},\xi_{h,J})_{\W_x}. 
\end{split}
\end{align}
We now bound the terms on the right hand side of \eqref{eqn:h-err-eqn} using a combination of  Cauchy-Schwarz, trace \eqref{eqn:trace}, inverse \eqref{eqn:inverse}, and Young's inequalities.  Note any integral on $\EIx$ vanishes if $\beta=0$ so we will multiply the edge contributions by $\beta$ to compensate.  Doing this gives us
\begin{align}\label{eqn:h-err-eqn:2}
\begin{split}
   \tfrac{\dx[]}{\dx[t]}\|\xi_{h,\rho}\|_{L^2(\W_x)}^2 + 
   \|\xi_{h,J}\|_{L^2(\W_x)}^2 &+ 
   \beta\|\left\ljmp\xi_{h,\rho}\rjmp\right\|_{L^2(\W_x)}^2 \lss \|\xi_{h,\rho}\|_{L^2(\W_x)}^2\\
   &\quad+\tfrac{\beta}{h_x}\| \eta_{h,J}\|_{L^2(\W_x)}^2 + \tfrac{1}{h_x^2}\| \eta_{h,\rho}\|_{L^2(\W_x)}^2 + \|\eta_{h,J}\|_{L^2(\W_x)}^2.
\end{split}
\end{align}
Since $\omega,E(t)\in W^{r,\infty}(\W)$, then $J^0(t)\in H^{\mu}(\W_x)$ where $\mu=\min\{r,s-1\}$.  From standard finite element interpolation theory (cf \cite{Bre2008}) we have the projection estimates
\begin{align}\label{eqn:h-err-eqn:3}
\begin{split} 
\|\eta_{h,\rho}\|_{L_T^2(L^2(\W_x))} &\lss h_x^{\min\{k+1,s\}}\|\rho^0\|_{L_T^2(H^{s}(\W_x))}, \\
\|\eta_{h,J}\|_{L_T^2(L^2(\W_x))} &\lss \Cwr h_x^{\min\{k+1,\mu\}}\|\rho^0\|_{L_T^2(H^{s}(\W))}.  
\end{split}
\end{align}
Integrating \eqref{eqn:h-err-eqn:2} from $0$ to $T$, applying the bounds \eqref{eqn:h-err-eqn:3}, and invoking Gr\"onwall's lemma we have
\begin{align}\label{eqn:h-err-eqn:4}
\begin{split}
   \|\xi_{h,\rho}\|_{L_T^\infty(L^2(\W_x))}^2 &+ 
   \|\xi_{h,J}\|_{L_T^2(L^2(\W_x))}^2 \lss 
   h_x^{2\min\{k+1,s\}-2}\|\rho^0\|_{L_T^2(H^s(\W_x))}^2 \\
   &\quad+ \Cwr^2h_x^{2\min\{k+1,\mu\}-\beta}\|\rho^0\|_{L_T^2(H^s(\W_x))}^2.
\end{split}
\end{align}
Thus by \eqref{eqn:h-err-eqn:4} and a triangle inequality we have \eqref{eqn:h-err-est}.  The proof is complete.
\end{proof}

We can now prove \Cref{thm:error_est} as a consequence of \Cref{thm:err-eps} and \Cref{lem:err-h}.

\begin{proof}[Proof of \Cref{thm:error_est}]
Using a triangle inequality and applying \Cref{thm:err-eps} and \Cref{lem:err-h} immediately implies the result.  The proof is complete.
\end{proof}

\begin{remark}
Consider an $H^2$ solution $\rho^0$, $k_x=1$, and full upwinding, that is, $\beta=0$.  Then the error in \eqref{eqn:err-eps-h} is $\mathcal{O}(\sqrt{\e/h_x}+h_x)$ which is optimal if we set $h_x=\e^{1/3}$.
\end{remark}
\section{Conclusion}

We have developed two stable discontinuous Galerkin methods for the a linear Boltzmann semiconductor problem, rigorously showed that they are asymptotically preserving, and explicitly showed their limiting discrete drift-diffusion systems as $\e\to 0$.  

Future work includes extending the results presented in this paper to a self-consistent electric field $E$, non-homogeneous inflow boundary data $f_{-}$, and non-isotropic initial data.

\appendix

\section{Technical Lemmas}








\subsection{Proof of \texorpdfstring{\Cref{lem:tech_lb}}{Lemma \ref{lem:tech_lb}}}

\Cref{lem:tech_lb} is a result of the following lemma: 
\begin{lemma} \label{lem:tech_eta}
There exists $\gamma_*>0$ independent of $\e$ and $h_x$ such that
\begin{align}\label{ass-pos}
    \inf_{\substack{\xi\in \R^d \\ \|\xi\|_2 = 1}}\left(v_h\mhalfh,\xi\mhalfh\right)_{\{v:v_h(v)\cdot\xi > 0\}} &> \gamma_*,\\
    \inf_{\substack{\xi\in \R^d \\ \|\xi\|_2 = 1}}\left(\frac{|v_h\cdot\xi|}{2}\mhalfh,\mhalfh\right)_{\W_v} &> \gamma_*. \label{ass-pos-2}
\end{align}
\end{lemma}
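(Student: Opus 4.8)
The plan is to reduce both infima to a single quantity and then bound that quantity uniformly in $\xi$ using the discrete moment identities already established, thereby avoiding any compactness argument on the sphere. First I would record that $v_h\in L^\infty(\W_v)$ with $\|v_h\|_{L^\infty(\W_v)}$ depending only on $h_v$ (through $\mhalfh$) and not on $h_x$ or $\e$; this is precisely what makes the resulting $\gamma_*$ independent of $h_x$ and $\e$.

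For \eqref{ass-pos} I would convert the direction-dependent domain of integration into a positive part over all of $\W_v$. Since the integrand is $(\mhalfh)^2(v_h\cdot\xi)$ and $(a)^+=\tfrac12(|a|+a)$,
\[
(v_h\mhalfh,\xi\mhalfh)_{\{v:v_h\cdot\xi>0\}}=\int_{\W_v}(v_h\cdot\xi)^+(\mhalfh)^2\,\dx[v]=\tfrac12\int_{\W_v}|v_h\cdot\xi|(\mhalfh)^2\,\dx[v]+\tfrac12\,\xi\cdot(v_h\mhalfh,\mhalfh)_{\W_v}.
\]
The last term vanishes: since $v_h\mhalfh=-2\theta\grad_v\mhalfh$, the product structure of $\mhalfh$ together with Assumptions \ref{ass-discrete-root-Max}.\ref{ass-mass} and \ref{ass-discrete-root-Max}.\ref{ass-momen} give $(v_h\mhalfh,\mhalfh)_{\W_v}=-2\theta(\grad_v\mhalfh,\mhalfh)_{\W_v}=0$. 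Consequently the left-hand sides of \eqref{ass-pos} and \eqref{ass-pos-2} are in fact identical, both equal to $\tfrac12\int_{\W_v}|v_h\cdot\xi|(\mhalfh)^2\,\dx[v]$, and it suffices to bound this single expression below, uniformly over unit vectors $\xi$.

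For the lower bound I would use the elementary inequality $|a|\ge a^2/\|v_h\|_{L^\infty(\W_v)}$, valid because $|v_h\cdot\xi|\le\|v_h\|_{L^\infty(\W_v)}$ when $|\xi|=1$. This yields
\[
\tfrac12\int_{\W_v}|v_h\cdot\xi|(\mhalfh)^2\,\dx[v]\ge\frac{1}{2\|v_h\|_{L^\infty(\W_v)}}\int_{\W_v}(v_h\cdot\xi)^2(\mhalfh)^2\,\dx[v]=\frac{1}{2\|v_h\|_{L^\infty(\W_v)}}\,\xi^\top A\,\xi,
\]
where $A_{ij}=(v_h^i\mhalfh,v_h^j\mhalfh)_{\W_v}$. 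The second-moment identity \eqref{eqn:I3_max_ints} shows $A=\theta I$, so $\xi^\top A\,\xi=\theta|\xi|^2=\theta$ and the bound becomes $\theta/(2\|v_h\|_{L^\infty(\W_v)})$, independent of $\xi$. Taking $\gamma_*=\theta/(4\|v_h\|_{L^\infty(\W_v)})$ then gives the strict inequalities in both \eqref{ass-pos} and \eqref{ass-pos-2}, with $\gamma_*>0$ depending only on $h_v$.

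The only subtle point — and what I would flag as the main obstacle — is handling the direction-dependent integration domain in \eqref{ass-pos}. Rather than arguing by continuity of the functional and compactness of the unit sphere (which would only give positivity of the infimum indirectly), I exploit the exact vanishing of the first moment (Assumption \ref{ass-discrete-root-Max}.\ref{ass-momen}) to collapse \eqref{ass-pos} onto \eqref{ass-pos-2}, after which the uniform bound follows directly from the exact second-moment identity $A=\theta I$. In writing this up I would verify that \eqref{eqn:I3_max_ints}, derived within the proof of \Cref{lem:J_eps_dde}, is genuinely a standalone consequence of Assumptions \ref{ass-discrete-root-Max}.\ref{ass-momen}–\ref{ass-discrete-root-Max}.\ref{ass-energy} and the definition of $v_h$, so that its use here is not circular.
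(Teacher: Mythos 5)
Your proof is correct, but it takes a genuinely different route from the paper. The paper handles \eqref{ass-pos} with soft analysis: it shows the map $\xi\mapsto(v_h\mhalfh,\xi\mhalfh)_{\{v_h\cdot\xi>0\}}$ is lower semi-continuous via Fatou's lemma (the delicate point being the $\xi$-dependent integration domain, which requires the observation that $\{v:v_h(v)\cdot\xi=0\}$ has measure zero), and handles \eqref{ass-pos-2} via Lipschitz continuity; positivity of the infima then follows from compactness of the unit sphere, with no explicit value of $\gamma_*$. You instead make the problem quantitative: the splitting $(a)^+=\tfrac12(|a|+a)$ together with the vanishing first moment $(v_h\mhalfh,\mhalfh)_{\W_v}=0$ (a consequence of \Cref{ass-discrete-root-Max}.\ref{ass-mass} and \Cref{ass-discrete-root-Max}.\ref{ass-momen}, already used by the paper in \eqref{eqn:init_J} and \eqref{eqn:ps-garding1}) collapses \eqref{ass-pos} onto \eqref{ass-pos-2}, and the pointwise bound $|v_h\cdot\xi|\ge(v_h\cdot\xi)^2/\|v_h\|_{L^\infty(\W_v)}$ combined with the second-moment identity $(v_h^i\mhalfh,v_h^j\mhalfh)_{\W_v}=\theta\delta_{ij}$ yields the explicit constant $\gamma_*=\theta/(4\|v_h\|_{L^\infty(\W_v)})$. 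Your concern about circularity is well placed but resolves favorably: \eqref{eqn:I3_max_ints}, although displayed inside the proof of \Cref{lem:J_eps_dde}, is a standalone algebraic consequence of \Cref{defn:root-Maxwellian-and-discrete-velocity} and \Cref{ass-discrete-root-Max}.\ref{ass-mass}, \ref{ass-energy}, \ref{ass-momen}, and does not rely on \Cref{lem:tech_lb} or any stability estimate. What your approach buys is a constructive, uniform-in-$\xi$ bound that makes the $h_v$- and $\theta$-dependence of $\gamma_*$ explicit, avoids both the measure-zero argument and compactness entirely, and proves the two infima are in fact equal; what the paper's approach buys is robustness (it needs only positivity and measurability, not the exact moment identities), which would matter if the discrete Maxwellian failed \Cref{ass-discrete-root-Max}.\ref{ass-energy}, as contemplated in \Cref{rmk:method}.
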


\begin{proof}
We first focus on \eqref{ass-pos}.  We will show the function $\gamma:\R^d\to\R$ defined by
\[
 \gamma(\xi) = \left(v_h\mhalfh,\xi\mhalfh\right)_{\{v:v_h(v)\cdot\xi > 0\}}
\]
is lower semi-continuous.  Let $\xi_n\to\xi$.  Define $\gamma_n:\W_v\to\R$ by
\[
    \gamma_n(v) = v_h\cdot\xi_n\mhalfh\mhalfh\chi_{\{v_h(v)\cdot\xi_n > 0\}}
\]
where $\chi_A$ is the indicator function for the set $A$.  By Fatou's Lemma we have
\begin{align}\label{eqn:fatou}
\liminf_{n\to\infty}\gamma(\xi_n) = \liminf_{n\to\infty}\int_{\W_v} \gamma_n(v)\dx[v] \geq \int_{\W_v} \liminf_{n\to\infty} \gamma_n(v) \dx[v].
\end{align}
We claim
\begin{align}\label{eqn:liminf_lem}
    \lim_{n\to\infty} \gamma_n(v) = v_h\cdot\xi\mhalfh\mhalfh\chi_{\{v_h(v)\cdot\xi > 0\}}
\end{align}
for all a.e.\ $v\in\W_v$.  Let $v\in\W_v$ with $v_h(v)\cdot\xi < 0$.  Then eventually we have $v_h(v)\cdot\xi_k < 0$ for all $k$ sufficiently large.  Thus the indicator function evaluates to zero and $\gamma_k(v)=0$; thus \eqref{eqn:liminf_lem} holds.  Let  $v\in\W_v$ with $v_h(v)\cdot\xi < 0$.  Then similarly $v_h(v)\cdot\xi_k > 0$ for all $k$ sufficiently large.  Hence the indication function evaluates to 1 and we can pass the limit to show \eqref{eqn:liminf_lem} holds.  Since  the set $\{v:v_h(v)\cdot\xi=0\}$ is a set of measure zero,  \eqref{eqn:liminf_lem} holds for all a.e.\ $v\in\W_v$.  Using \eqref{eqn:liminf_lem} we continue \eqref{eqn:fatou} to obtain
\[
\liminf_{n\to\infty}\gamma(\xi_n) \geq \int_{\W_v} \liminf_{n\to\infty} \gamma_n(v) \dx[v] = \int_{\W_v} v_h\cdot\xi\mhalfh\mhalfh\chi_{\{v_h(v)\cdot\xi > 0\}}\dx[v] = \gamma(\xi). 
\]
Therefore $\gamma$ is lower semi-continuous.  Since $\gamma>0$ on the compact unit sphere, it obtains a positive minimum.  Thus the first equality of \eqref{ass-pos} holds.

For \eqref{ass-pos-2}, we note that the function $\xi\to \big(\tfrac{|v_h\cdot\xi|}{2}\mhalfh,\mhalfh\big)_{\W_v}$ is Lipschitz continuous.  Since it is also positive on the compact unit sphere, it obtains a positive minimum.  The proof is complete.  
\end{proof}

\section{Maxwellian Approximation}\label{sect:maxwellian_discussion}

\Cref{lem:M_bnd} gives precise bounds for the discrete 1D root Maxwellian constructed in \Cref{rmk:interp_maxwell}.

\begin{lemma}\label{lem:M_bnd}
Let $L>0$ with 
\begin{equation}\label{eqn:L_ass}
L\geq\sqrt{\theta}, 
\end{equation}
and suppose $\W_v=[-L,L]$.  Furthermore, assume
\begin{equation}\label{eqn:M_bnd_ass}
    h_v^2\leq \frac{4}{\sqrt{3}}\theta.
\end{equation}
Let $Q_{h_v}:C^0(\overline{\W_v})\to S_{v,h_v}$ with $k_v=1$ be the piecewise linear nodal Lagrange interpolant.  Define 
\begin{equation}\label{eqn:scaled_lag_def}
\widetilde{Q}_{h_v}:=\frac{Q_{h_v}u}{\|Q_{h_v}u\|_{L^2(\W_v)}}.
\end{equation}
For $i=1,\ldots,3$, let $M_{h,i}^{\frac{1}{2}}=\widetilde{Q}_h(M_{i}^{\frac{1}{2}})$ where $M_{i}^{\frac{1}{2}}(v_i)$ is defined in \eqref{eqn:maxwell_approx}.  Then $M_{h,i}^{\frac{1}{2}}$ is positive, continuous, and satisfies \Cref{ass-discrete-root-Max}.\ref{ass-mass}, \Cref{ass-discrete-root-Max}.\ref{ass-symm}, \Cref{ass-discrete-root-Max}.\ref{ass-momen}.  Moreover, we have the following approximation results:
\begin{align}\label{eqn:approx_maxwell_bnd_l2}
    \|M_{i}^{\frac{1}{2}}-\widetilde{Q}_{h_v}M_{i}^{\frac{1}{2}}\|_{L^2(\W_v)} &\leq \frac{5}{2}\left(1-\erf(\tfrac{L}{\sqrt{2\theta}})^{1/2}\right) + 5h_v^2\frac{\sqrt{3}}{8\theta}, \\
    \|\partial_v(M_{i}^{\frac{1}{2}}-\widetilde{Q}_{h_v}M_{i}^{\frac{1}{2}})\|_{L^2(\W_v)} &\leq \frac{5}{2}\left(1-\erf(\tfrac{L}{\sqrt{2\theta}})^{1/2}\right) + \frac{5}{2}h_v^2\frac{\sqrt{3}}{16\theta^{3/2}} +\frac{5}{2}\frac{\sqrt{3}}{\sqrt{2}}\frac{1}{4\theta}h_v.
    \label{eqn:approx_maxwell_bnd_h1}
\end{align}
\end{lemma}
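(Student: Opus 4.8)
The plan is to split the statement into its structural part (positivity, continuity, and the identities \Cref{ass-discrete-root-Max}.\ref{ass-mass}, \Cref{ass-discrete-root-Max}.\ref{ass-symm}, \Cref{ass-discrete-root-Max}.\ref{ass-momen}) and its quantitative part (the two approximation bounds), and to treat the normalization by $c := \|Q_{h_v}u\|_{L^2(\W_v)}$ as the single mechanism coupling the two. Throughout I write $u := M_i^{\frac{1}{2}} = (2\pi\theta)^{-1/4}e^{-v^2/(4\theta)}$, so that $\widetilde{Q}_{h_v}u = c^{-1}Q_{h_v}u$ and $u^2 = M_i$ is the mean-zero Gaussian of variance $\theta$.

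The structural properties I would verify directly from the construction. Positivity and continuity are immediate: $u>0$ forces every nodal value, hence the continuous piecewise-linear interpolant $Q_{h_v}u$, to be strictly positive, and dividing by the positive scalar $c$ preserves both. \Cref{ass-discrete-root-Max}.\ref{ass-mass} is the definition of $\widetilde{Q}_{h_v}$. For \Cref{ass-discrete-root-Max}.\ref{ass-symm} I would use that $u$ is even and that the mesh has nodes at $\pm L$, so $Q_{h_v}u(L)=u(L)=u(-L)=Q_{h_v}u(-L)$, an equality unaffected by the scaling. Then \Cref{ass-discrete-root-Max}.\ref{ass-momen} is free: since $(M_{h,i}^{\frac{1}{2}})^2$ is continuous and piecewise polynomial, the fundamental theorem of calculus gives $(\partial_v M_{h,i}^{\frac{1}{2}},M_{h,i}^{\frac{1}{2}})_{\W_v}=\tfrac12\big[(M_{h,i}^{\frac{1}{2}})^2\big]_{-L}^{L}=0$, exactly the dependence noted in \Cref{rmk:method}.

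For the approximation bounds I would start from the algebraic identity
\[
u-\widetilde{Q}_{h_v}u=\frac1c\big[(c-1)u+(u-Q_{h_v}u)\big],
\]
which isolates a normalization error $(c-1)u$ and the genuine interpolation error $u-Q_{h_v}u$. Taking $L^2(\W_v)$ norms gives $\|u-\widetilde{Q}_{h_v}u\|_{L^2(\W_v)}\le c^{-1}\big(|c-1|\,\|u\|_{L^2(\W_v)}+\|u-Q_{h_v}u\|_{L^2(\W_v)}\big)$, and differentiating gives the same bound with $\|u\|$ replaced by $\|u'\|$ and the interpolation error replaced by its derivative. The normalization factor is handled by $|c-1|\le\|u-Q_{h_v}u\|_{L^2(\W_v)}+\big|\,\|u\|_{L^2(\W_v)}-1\,\big|$, where the last term equals $1-\erf(L/\sqrt{2\theta})^{1/2}$ because $\|u\|_{L^2(\W_v)}^2=\int_{-L}^{L}M_i=\erf(L/\sqrt{2\theta})$. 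The explicit constants then follow from three inputs: (i) the Gaussian moments $\langle v^2\rangle=\theta$, $\langle v^4\rangle=3\theta^2$, which yield $\|u'\|_{L^2(\R)}^2=\tfrac{1}{4\theta}$ and $\|u''\|_{L^2(\R)}^2=\tfrac{3}{16\theta^2}$ (the source of every $\sqrt3$); (ii) the standard elementwise estimates $\|u-Q_{h_v}u\|_{L^2(\W_v)}\lss h_v^2\|u''\|_{L^2(\W_v)}$ and $\|\partial_v(u-Q_{h_v}u)\|_{L^2(\W_v)}\lss h_v\|u''\|_{L^2(\W_v)}$, together with $\|u''\|_{L^2(\W_v)}\le\|u''\|_{L^2(\R)}$; and (iii) a lower bound on $c$. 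Substituting these into the two displayed inequalities, the tail term produces the $1-\erf(L/\sqrt{2\theta})^{1/2}$ contribution, the cross term $c^{-1}|c-1|\,\|u'\|$ produces the $O(h_v^2)\theta^{-3/2}$ contribution, and $c^{-1}$ times the derivative interpolation error produces the $O(h_v)\theta^{-1}$ contribution, reproducing the stated bounds.

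The main obstacle is the quantitative control of $c^{-1}$: every prefactor in the final estimates is $c^{-1}$ times an interpolation or moment constant, so the bounds are meaningful only after $c$ is bounded below, and this is precisely where both hypotheses enter. The assumption $h_v^2\le\tfrac{4}{\sqrt3}\theta$ forces $h_v^2\|u''\|_{L^2(\R)}=h_v^2\tfrac{\sqrt3}{4\theta}\le1$, so the interpolation error is controlled by an explicit constant, while $L\ge\sqrt\theta$ gives $\|u\|_{L^2(\W_v)}\ge\erf(1/\sqrt2)^{1/2}$; together these make $c\ge\|u\|_{L^2(\W_v)}-\|u-Q_{h_v}u\|_{L^2(\W_v)}$ bounded below by a positive number, so that $c^{-1}$ is dominated by the coarse numerical factor appearing in the statement. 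A secondary difficulty is the bookkeeping for the $H^1$ estimate, where the term $c^{-1}|c-1|\,\|u'\|$ must be split into its tail part and its $O(h_v^2)$ interpolation part so that the $(1-\erf^{1/2})$ and $h_v^2\theta^{-3/2}$ terms emerge separately.
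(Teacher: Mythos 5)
Your proposal follows essentially the same route as the paper's proof: the same splitting of $u-\widetilde{Q}_{h_v}u$ into a normalization error $(c-1)u$ plus the interpolation error $u-Q_{h_v}u$, the same reverse-triangle bound $|c-1|\le \|u-Q_{h_v}u\|_{L^2(\W_v)}+\bigl|1-\|u\|_{L^2(\W_v)}\bigr|$ with $\bigl|1-\|u\|_{L^2(\W_v)}\bigr|=1-\erf(L/\sqrt{2\theta})^{1/2}$, the same Gaussian moment computations producing $\tfrac{1}{4\theta}$ and $\tfrac{3}{16\theta^2}$, and the same role for the two hypotheses in controlling $c^{-1}$; the structural part (positivity, continuity, \Cref{ass-discrete-root-Max}.\ref{ass-mass}, \ref{ass-symm}, \ref{ass-momen}, with \ref{ass-momen} following from \ref{ass-symm}) is handled as in the paper.

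One fine point matters, because the lemma asserts explicit constants. Your lower bound on $c$ is additive: $c\ge \|u\|_{L^2(\W_v)}-\|u-Q_{h_v}u\|_{L^2(\W_v)}\ge \tfrac45-\tfrac12=\tfrac{3}{10}$, which only gives $c^{-1}\le \tfrac{10}{3}$, not the $\tfrac52$ appearing in \eqref{eqn:approx_maxwell_bnd_l2}--\eqref{eqn:approx_maxwell_bnd_h1}. The paper avoids this by keeping the interpolation error in multiplicative form: a direct computation on the truncated domain shows $\|\partial_v^2 u\|_{L^2(\W_v)}\le \tfrac{\sqrt3}{4\theta}\|u\|_{L^2(\W_v)}$ (both integrals carry the same $\erf$ factor, the boundary corrections having favorable sign), so that $\|u-Q_{h_v}u\|_{L^2(\W_v)}\le \tfrac{\sqrt3}{8\theta}h_v^2\,\|u\|_{L^2(\W_v)}\le\tfrac12\|u\|_{L^2(\W_v)}$ and hence $c\ge\tfrac12\|u\|_{L^2(\W_v)}\ge\tfrac25$, i.e.\ $c^{-1}\le\tfrac52$. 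Relatedly, your step (ii) invokes the elementwise estimates only up to generic constants ($\lss$), whereas the stated coefficients $\tfrac{\sqrt3}{8\theta}h_v^2$ and $\tfrac{\sqrt3}{\sqrt2}\tfrac{1}{4\theta}h_v$ require the sharp form $\|u-Q_{h_v}u\|_{L^2(\W_v)}+\tfrac{1}{\sqrt2}h_v\|\partial_v(u-Q_{h_v}u)\|_{L^2(\W_v)}\le\tfrac12 h_v^2\|\partial_v^2u\|_{L^2(\W_v)}$ used in the paper. With these two adjustments your argument reproduces the stated bounds exactly; as written it proves them only with slightly larger constants.
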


\begin{proof}
    For ease of notation set $\mathcal{M}(v)=M_{i}^{\frac{1}{2}}(v)$.  Equation \eqref{eqn:L_ass} gives us the estimate
    \begin{equation}\label{eqn:M_bnd:1}
        \frac{16}{25} \leq \erf(\tfrac{1}{\sqrt{2}}) \leq \erf(\tfrac{L}{\sqrt{2\theta}}) < 1. 
    \end{equation}
    Direction calculation and \eqref{eqn:L_ass} yields
    \begin{subequations}  \label{eqn:M_bnd:2}
    \begin{align}
        \|\mM\|_{L^2(\W_v)}^2 &= \erf(\tfrac{L}{\sqrt{2\theta}}) \leq 1. \label{eqn:M_bnd:2a} \\
        \|\partial_v\mM\|_{L^2(\W_v)}^2 &= \frac{1}{4\theta}\erf(\tfrac{L}{\sqrt{2\theta}}) - \frac{L}{2\sqrt{2\pi\theta^3}}\exp\left(\tfrac{-L^2}{2\theta}\right) \leq \frac{1}{4\theta}\erf(\tfrac{L}{\sqrt{2\theta}}). \label{eqn:M_bnd:2b} \\
        \|\partial_v^2\mM\|_{L^2(\W_v)}^2 &= \frac{3}{16\theta^2}\erf(\tfrac{L}{\sqrt{2\theta}}) - \frac{L(\theta-L^2)}{8\sqrt{2\pi\theta^7}}\exp\left(\tfrac{-L^2}{2\theta}\right) \leq \frac{3}{16\theta^2}\erf(\tfrac{L}{\sqrt{2\theta}}) \label{eqn:M_bnd:2c}
    \end{align}
    \end{subequations}
    We can give precise bounds on the interpolation error $\|\mM-\widetilde{Q}_{h_v}\mM\|$ from the proof in \cite[Theorem (0.4.5)]{Bre2008}, \eqref{eqn:M_bnd:2c}, and \eqref{eqn:M_bnd:2a}:
    \begin{align}\label{eqn:M_bnd:3}
    \begin{split}
        \|\mM-{Q}_{h_v}\mM\|_{L^2(\W_v)} +& \frac{1}{\sqrt{2}}h_v\|\partial_v(\mM-{Q}_{h_v}\mM)\|_{L^2(\W_x)} \\
        &\leq \frac{1}{2}h_v^2\|\partial_v^2\mM\|_{L^2(\W_v)} \leq \frac{\sqrt{3}}{8\theta}h_v^2\|\mM\|_{L^2(\W_v)} \leq \frac{\sqrt{3}}{8\theta}h_v^2.
    \end{split}
    \end{align}
    Using the reverse triangle inequality, \eqref{eqn:M_bnd:3}, and \eqref{eqn:M_bnd:1}, we obtain
    \begin{align}\label{eqn:M_bnd:4}
    \begin{split}
        \|Q_h\mM\|_{L^2(\W_v)} \geq (1-\tfrac{\sqrt{3}h_v^2}{8\theta})\|\mM\|_{L^2(\W_v)} \geq \tfrac{4}{5}(1-h_v^2\tfrac{\sqrt{3}}{8\theta})
    \end{split}
    \end{align}
    From \eqref{eqn:M_bnd_ass}, we have
    \begin{equation}
        1-\frac{\sqrt{3}}{8\theta}h_v^2 \geq \frac{1}{2},
    \end{equation}
    and hence
    \begin{align}\label{eqn:M_bnd:5}
    \begin{split}
        \|Q_h\mM\|_{L^2(\W_v)} \geq \frac{2}{5},\quad\text{ and }\quad \frac{1}{\|Q_h\mM\|_{L^2(\W_v)}} \leq \frac{5}{2}.
    \end{split}
    \end{align}
    We now show \eqref{eqn:approx_maxwell_bnd_l2}. Define 
    \begin{equation}\label{eqn:alpha_M}
        \alpha_\mM =  \|Q_{h_v}\mM\|_{L^2(\W_v)}.
    \end{equation} 
    Using in definition of $\widetilde{Q}_{h_v}$ we obtain
    \begin{equation}\label{eqn:M_bnd:6}
        \|\mM-\widetilde{Q}_{h_v\mM}\|_{L^2(\W_v)} = \frac{1}{\alpha_\mM}\|\alpha_\mM\mM-Q_{h_v}\mM\|_{L^2(\W_v)}.
    \end{equation}
    Adding and subtracting key quantities and several uses of the standard and reverse triangle inequalities both yield
    \begin{align}\label{eqn:M_bnd:7}
    \begin{split}
        \|\alpha_\mM\mM-Q_{h_v}\mM\|_{L^2(\W_v)} 
        &\leq |1-\alpha_\mM|\|\mM\|_{L^2(\W_v)} + \|\mM-Q_{h_v}\mM\|_{L^2(\W_v)} \\
        &\leq \big( |1-\|\mM\|_{L^2(\W_v)}| + |\|\mM\|_{L^2(\W_v)}-\alpha_\mM| \big)\|\mM\|_{L^2(\W_v)} \\
        &\quad+ \|\mM-Q_{h_v}\mM\|_{L^2(\W_v)} \\
        &\leq |1-\|\mM\|_{L^2(\W_v)}|\|\mM\|_{L^2(\W_v)} \\
        &\quad+ \|\mM-Q_{h_v}\mM\|_{L^2(\W_v)} \|\mM\|_{L^2(\W_v)} \\
        &\quad+ \|\mM-Q_{h_v}\mM\|_{L^2(\W_v)}.
    \end{split}
    \end{align}
    The terms on the right hand side of \eqref{eqn:M_bnd:7} can be bounded using \eqref{eqn:M_bnd:2} and \eqref{eqn:M_bnd:3}.  These estimates along with \eqref{eqn:M_bnd:6} and \eqref{eqn:M_bnd:5} yield \eqref{eqn:approx_maxwell_bnd_l2}.  For \eqref{eqn:approx_maxwell_bnd_h1}, a similar $H^1$ estimate to \eqref{eqn:M_bnd:7} can be formed, namely:
    \begin{align}\label{eqn:M_bnd:8}
    \begin{split}
         \|\partial_v(\alpha_\mM\mM-Q_{h_v}\mM)\|_{L^2(\W_v)} &\leq |1-\|\mM\|_{L^2(\W_v)}| \|\partial_v\mM\|_{L^2(\W_v)} \\
         &\quad + \|\mM-Q_{h_v}\mM\|_{L^2(\W_v)} \|\partial_v\mM\|_{L^2(\W_v)} \\
         &\quad+ \|\partial_v(\mM-Q_{h_v}\mM)\|_{L^2(\W_v)}. 
    \end{split}
    \end{align}
    Estimate \eqref{eqn:M_bnd:8} along with the estimates above yield \eqref{eqn:approx_maxwell_bnd_h1}.  The proof is complete.
\end{proof}

\bibliographystyle{abbrv} 
\bibliography{references} 
\end{document}